\newtheorem{Theorem}{Theorem}[section]
\newtheorem{Cor}[Theorem]{Corollary}
\newtheorem{Lemma}[Theorem]{Lemma}
\newtheorem{Proposition}[Theorem]{Proposition}
\theoremstyle{definition}
\newtheorem{Definition}[Theorem]{Definition}
\theoremstyle{remark}
\newtheorem{rem}[Theorem]{Remark}
\newtheorem{ex}[Theorem]{Example}
\numberwithin{equation}{section}
\newcommand{\norm}[1]{\left\Vert\right\Vert}
\newcommand{\abs}[1]{\left\vert\right\vert}
\newcommand{\set}[1]{\left\{\right\}}
\newcommand{\R}{\mathbb R}
\newcommand{\N}{\mathbb N}
\newcommand{\Z}{\mathbb Z}
\newcommand{\C}{\mathbb C}
\newcommand{\F}{\mathcal{F}}
\newcommand{\res}{\hbox{\rm res}}
\newcommand{\tr}{\hbox{\rm tr}}
\begin{document}

\title{On the geometry of $Diff(S^1)-$pseudodifferential operators based on renormalized traces.}%
\author{Jean-Pierre Magnot}%
\address{LAREMA - UMR CNRS 6093 \\ Universit\'e d'Angers \\ 2 Boulevard Lavoisier 
	49045 Angers cedex 01 \\ and \\ Lyc\'ee Jeanne dArc \\ 30 avenue de Grande Bretagne\\
	F-63000 Clermont-Ferrand \\
	 http://orcid.org/0000-0002-3959-3443}%
\email{jean-pierr.magnot@ac-clermont.fr}%



\begin{abstract}
In this article, we examine the geometry of a group of Fourier-integral operators, which is the central extension of $Diff(S^1)$ with a group of classical pseudo-differential operators of any order. Several subgroups are considered, and the corresponding groups with formal pseudodifferential operators are defined. We investigate the relationship of this group with the restricted general linear group $GL_{res},$ we define a right-invariant pseudo-Riemannian metric on it that extends the Hilbert-Schmidt Riemannian metric by the use of renormalized traces of pseudo-differential operators, and we describe classes of remarkable connections.     
\end{abstract}

\maketitle
\noindent
{\small MSC (2010) : 22E66, 47G30, 58B20, 58J40}

\noindent
{\small Keywords : Fourier-integral operators, infinite dimensional groups, Schwinger cocycle, pseudo-differential operators, renormalized traces, Hilbert-Schmidt metric}

\section*{Introduction}
In the mathematical literature, the introduction of infnite dimensional Lie groups is very often motivated by the use that can be done of such objects which intrinsic properties are difficult to catch. For example, symmetry groups arise in the theory of ordinary differential equations and partial differential equations. Groups of diffeomorphisms  arise in the basic theory of differential manifolds, dynamical systems, and have applications in a wide range of examples such as knot theory, stochastic analysis. Manifolds of maps, current groups and gauge groups have their own applications in various models in physics while their intrinsic properties are deeply related to homotopy theory.
  
The aim of this paper is the description of a family of infinite dimensional Lie groups, derived from a natural interplay between classical pseudodifferential operators and diffeomorphisms, which can be found in the litterature first in \cite{Pay2008} and in \cite{Ma2016}, specializing the study when the base manifold is $S^1.$ Motivations for the introduction of these groups are different in the two works. In \cite{Pay2008} the motivation is the full description of a possible structure group for infinite dimensional principal bundles where a Chern-Weil theory can be stated. In \cite{Ma2016}, the motivation comes from the need of description of the structure group of the space of non-parametrized space of embeddings of a smooth compact boundaryless manifold $M$ to a smooth finite dimensional Riemanian manifold manifold $N.$ In both descriptions, the group under consideration is the central extension of a group of diffeomorphisms by a group of classical pseudo-differential operators. 

More precisely, the group under consideration in a central extension of the group of diffeomoprhisms $Diff(S^1)$ by the group $Cl^*(S^1,V) $ of invertible elements of the algebra $Cl(S^1,V )$ of non-formal, classical, maybe unbounded pseudo-differential operators  acting on a trivial $n-$dimensional Hermitian bundle $S^1 \times V$ over $S^1.$ In such an object, one can derive many infinite dimensional groups: 
\begin{enumerate}
	\item the loop group $C^\infty(S^1,SU_n)$
	\item the group of orientation preserving diffeomorphisms $Diff_+(S^1)$
	\item the group generated by $L^2-$orthogonal symmetries related to $L^2$ orthogonal projections on a finite demensional vector subspace of $C^\infty(S^1,V)$
\end{enumerate}
But beside these examples of groups of bounded operators, which are all subroups of the restricted unitary group $U_{res}$ described in \cite{PS}, we also wish to recover here some non-formal versions of the spaces of elliptic injective classical pseudo differential operators, and in particular the square root of the Laplacian, in order to state results in a framework as general as possible. Due to the presence of unbounded pseudo-differential operators, and in particular differential operators of order 1, the Lie algebra of this group cannot be embedded in a group of bounded operators acting on the space of sections $C^\infty(S^1,V),$ but only represented in it. As a technical remark, we have to say that we have here an example of non-regular infinite dimensional Lie group, as given in remark \ref{nonreg} adapting a remark from \cite{MR2016}. But another technical feature is that this group does not seem to carry atlas, for the same reasons of presence of unbounded operators, as first described in \cite{ARS1} in the context of formal pseudo-differential operators. But one can anyway consider this group as an ``infinite dimensional group'' in the way of \cite{KW}, or as a so-called Fr\"olicher Lie group, see e.g. \cite{MR2016} and references therein, in order to make safe the notion of smoothness. In the section dedicated to the preliminaries, we describe this groups and some of its remarkable subgroups, we recall classical propoerties of the (zeta-))renormalized traces of pseudo-differential operators, describe useful splisstings of the algebra of formal pseudo-differntial operators from existing literature, and fully describe the space of \textbf{formal} $Diff(S^1)-$pseudo-differential operators. This last description is, to our knowledge, not given before this work.

In section \ref{s:res}, we compare this group with the restricted general linear group and develop the index $2-form$ $\lambda$ on it.    
Indeed, by considering only $S^1$ as a base manifold, this enables us to describe more deeply the algebraic and geometric structurees derived from the Dirac operator $D$, or more precisely its sign $\epsilon(D)$, which
 defines a polarization on  $L^2(S^1,\C^k),$ splitting this space into eigenspaces
of positive and non-positive eigenvalues. This polarization is described in e.g. \cite{PS}, is shown here to generalize to unbounded operators in an elegant way, generalizing the remarks initiated in \cite{Ma2003,Ma2006-2} on one hand and some geometric constructions using the Lie group of bounded operators
$$ GL_{res}(S^1,\C^k) = \lbrace u \in GL(L^2(S^1,\C^k)) \hbox{ such that } [\epsilon(D),u] \hbox{ is Hilbert-Schmidt }\rbrace$$
with Lie algebra
$$ \mathcal{L}(S^1,\C^k) = \lbrace u \in \mathcal{L}(L^2(S^1,\C^k)) \hbox{ such that } [\epsilon(D),u] \hbox{ is Hilbert-Schmidt }\rbrace.$$ on the other hand.
This Lie group has a central extension, and the corresponding central extension of its Lie algebra is given by the 
Schwinger cocycle, first found by J. Schwinger in \cite{Sch}. This 2-cocycle was known as a cocycle on Lie algebras of
bounded operators (see e.g. \cite{CDMP}, \cite{Mick}). In \cite{Ma2006-2}, \cite{Ma2008} we proved that the Schwinger
cocycle can be extended naturally to the algebra $PDO(S^1,\C^k)$ of (maybe non classical, maybe unbouded) pseudo-differential operators.
Moreover, we gave its relations with 
a pull-back $c^D_+$ of the Khesin-Kravchenko-Radul cocycle  on formal symbols \cite{KK}, \cite{Rad}, using
an appropriate linear extension of the usual trace of trace-class operators. 
$c^D_+$ and ${1 \over 2}c^D_S$ have the same cohomology class.
On loop groups, this (Lie algebra)-cocycle pulls back to the central extension of the loop algebra \cite{PS},
\cite{CDMP}, \cite{Ma2003}, while it enables to recover the Gelfand-Fuchs cocycle on $Vect(S^1)$ \cite{Ma2006-2}. We extend here the Lie-algebraic considerations of \cite{Ma2006-2} to the grôup under consideration.

In section \ref{s:HS} we construct the extension of the Hlibert-Schmidt Hermitian product
$$(a,b) \mapsto tr(ab^*)$$ to the Lie algebra $Cl(S^1,V) \rtimes Vect(S^1)$ by replacing the usual trace of trace-class operators by one of its linear extensions, the zeta-renormalized trace $\tr^\Delta$ described in the preliminaries. The first difficulty comes from the fact that $\tr^\Delta$ is not tracial, i.e. $$\exists (a,b) \in Cl(S^1;V), \quad \tr^\Delta([a,b])\neq 0.$$
Surprisingly, we obtain a non-degenerate sesquilinear form $(.,.)_\Delta$ on $Cl(S^1;V),$ whicch gives rise to a non-degenerate bilinear form $\mathfrak{Re}(.,.)_\Delta$ on $Cl(S^1;V) \rtimes Vect(S^1).$ We also show that these forms are not positive, and multiplication operators are isotropic vectors. These two forms then generate, by right-invariant action, pseudo-Hermitian and pseudo-Riemannian metrics on $Cl^*(S^1,V)$ and $FCL^*_{Diff(S^1)}(S^1,V)$ respectively.  

Due to the presence of a pseudo-Hermitian metric, one can ask whether there exists pseudo-Hermitian connections. This is the aim of section \ref{s:conn}, where families of pseudo-Hermitian connections are described. However, there exist some technical difficulties to pass in order to describe the whole family of pseudo-Hermitian connections for $(.,.)_\Delta,$ due in particular to the actual lack of knowledge on the linear maps acting on $Cl(S^1,V).$ We restrict our investigations to connection $1-$forms that read as composition by a smoothing operator. Our results are mostly based on the remark that $[a,\epsilon(D)]$ and $sas^*,$ when $s$ is smoothing, are smoothing operators. 
  
  We finish this work by giving some specialization remarks. We describe how $Diff(S^1)$ acts on the polarization, we give another interpretation of the Schwinger cocycle in terms of the curvature of one of our smoothing connections defined in section \ref{s:conn}, we show that a Levi-Civita type exists for $\mathfrak{Re}(.,.)_\Delta$ on the group $FCl^*_{ee,Diff(S^1)}(S^1,V)$ and that it fits with classical formulas for the Levi-Civita connections in finite dimensional settings, and we finish with a blockwise decomposition of  $\mathfrak{Re}(.,.)_\Delta$ on the group of bounded, even-even class operators $FCl^{0,*}_{ee,Diff(S^1)}(S^1,V).$
  
  \vskip 12pt
  \paragraph{Acknowledgements:}
  	The author is happy to dedicate this work to Sylvie Paycha on her 60th birthday, 18 years after working under her supervision, with gratitude for this first step into research.

	{
{
{
{

\section{Preliminaries on operators on $S^1$}
In this section, we make a moderate use of the notion of diffeology, notion which is not among the main goals of this paper. A classical presentation on diffeologies can be found in \cite{Igdiff}, and the necessary material is already reviewed in many publications of the author. We refer to \cite{MR2019} and references therein for an updated review of the necessary notions to understand in-depth technical properties of the present paper, and to \cite{Ma2016} for a preliminary work about the same objects. When precise notions and properties will be necessary, precise citations will be given. For a superficial reading, the reader can replace diffeologies by a ``natural differentiation" on operators. 
\subsection{Basics on pseudo-differential operators}
An exposition  of basic facts on pseudo-differential operators can
be found in \cite{Gil}. In this section, we only review the definitions and 
tools that are necessary for this note. 

We set $S^1 = \left\{z \in \C \, | \, |z| = 1\right\}$. We shall use for convenience the smooth atlas 
$\mathcal{A}$ of $S^1$ defined as follows:
\begin{eqnarray*}
\mathcal{A} & = & \{\varphi_0,\varphi_1\} ; \\
\varphi_n & : & x \in ]0 ; 2\pi[ \mapsto e^{i(x + n\pi)} \subset S^1 
\hbox{ for } n\in \{0;1\} \end{eqnarray*}     
Associated to this atlas, we fix a smooth partition of the unit $\{s_0;s_1\}$.
We identify each of these functions with its associated multiplication operator when 
necessary.
An operator $A : C^\infty(S^1,\mathbb{C}) \rightarrow C^\infty(S^1,\mathbb{C})$ can be described in 
terms of 4 operators
$$ A_{m,n} : f \mapsto s_m \circ A \circ s_n \hbox{ for } (m,n) \in \{0,1\}^2.$$ 
A scalar
pseudo-differential operator  of order $o$ 
is an operator $$ A : C^\infty(S^1, \mathbb{C}) \rightarrow C^\infty(S^1,\mathbb{C})$$
such that, $\forall (m,n) \in \{0,1\}^2,$
$$ A_{m,n}(f) = \int_{]0 ; 2\pi[} e^{-ix\xi}\sigma_{m,n}(x,\xi) \hat{(s_n.  
f)} (\xi) d\xi$$
where $\sigma_{m,n} \in C^\infty( ]0 ; 2\pi[ \times \R, \mathbb{C})$ satisfies
$$\forall (\alpha, \beta) \in \N^2, \quad |D^\alpha_x D^\beta_\xi \sigma_{m,n}(x,\xi)
|\leq C_{\alpha,\beta}(1 + |\xi|)^{o-\beta}.$$
(In these formulas, the maps $f$, $s_n$ and $A_{m,n}(f)$ are read on the local 
charts $\varphi_0, \varphi_1,$ but we preferred to only mention this aspect and not 
to give heavier formulas and notations, since the setting for $S^1$-pseudo-
differential operators is rather more simple than for manifolds of higher 
dimension.)   

Let $E = S^1 \times \mathbb{C}^k$ be a trivial smooth vector bundle over $S^1$.
Let $s \in \R.$ An operator acting on $C^\infty(S^1,\mathbb{C}^n)$ is a pseudo-differential operator 
if it can be viewed as a $(n \times n)$-matrix of scalar pseudo-differential operators. 
A pseudo-differential operator $A$  of order $o$ 
extends to a linear bounded operator on Sobolev spaces
$H^s(S^1,\C^n) \rightarrow H^{s-o}(S^1,\mathbb{C}^n)$. 
In particular, an order $0$ pseudo-differential operator
is a bounded operator on $H^s(S^1,\mathbb{C}^n)$. 

A pseudo-differential operator of order $o$ is called 
\textbf{classical} if and only if its symbols 
$\sigma_{m,n}$ have an asymptotic expansion
$$ \sigma_{m,n}(x,\xi) \sim_{|\xi| \rightarrow +\infty} \sum_{j=-\infty}^o (\sigma_{m,n})_j(x,\xi),$$
where the maps $(\sigma_{m,n})_j : S^1 \times \R^* \rightarrow \mathbb{C}$, 
called \textbf{partial symbols},
 are j-positively 
homogeneous, i.e. $\forall t>0, (x,\xi) \in S^1 \times \R^*, 
(\sigma_{m,n})_j(x,t\xi) = t^j (\sigma_{m,n})_j(x,\xi).$

We also define \textbf{log-polyhomogeneous} pseudo-differential operators.
A pseudo-differential operator is log-polyhomogeneous if and only if there exists
$o' \in \N$ such that its symbols
$\sigma_{m,n}$ have an asymptotic expansion
$$ \sigma_{m,n}(x,\xi) \sim_{|\xi| \rightarrow +\infty} \sum_{j=0}^o \sum_{k = -\infty}^{o'}
(\sigma_{m,n})_j(x,\xi)(log(|\xi|))^k,$$
where the maps $(\sigma_{m,n})_j : S^1 \times \R^* \rightarrow \mathbb{C}$
are classical partial symbols of order $j$. $o'$ is called the \textbf{logarithmic order} of 
the pseudo-differential operator. Of course, classical pseudo-differential operators are log-polyhomogeneous
pseudo-differential operators of logarithmic order $0$.  

\vskip 5pt
The notion of symbol and partial symbol appear local (dependent on the charts of the 
atlas) in view of these definitions but, in this very special case of atlas on $S^1$ 
where the changes of coordinates are translations,
one can see with the formulas of change of coordinates given in e.g. \cite{Gil} 
that the partial symbols of a pseudo-differential operator can be defined 
globally, taking $$\sigma_j =\sum_{(m,n \in \{0,1\}^2}  (\sigma_{m,n})_j.$$ 
There is another way
to define globally the partial symbols of an operator is in \cite{BK}
, see e.g. \cite{Wid}, 
using linearizations of the manifold. This second way to define the formal symbol 
is more useful when one works with manifolds more complicated than $S^1$. 

Now, define the sets of smooth maps
$$ S(S^1,\mathbb{C}^k) = \bigcup_{o \in \R} S^o(S^1,\mathbb{C}^k)$$
with
$$ S^o(S^1,\mathbb{C}^k) = \left\{ \sigma \in C^\infty(T^*S^1,M_n(\mathbb{C})) \hbox{ such that }\right.$$
$$ \forall (\alpha, \beta) \in \N^2, \quad ||D^\alpha_x D^\beta_\xi \sigma(x,\xi)|| 
\left.\leq C_{\alpha,\beta}(1 + |\xi|)^{o-\beta} \right\}$$
and $$S^{-\infty}(S^1,\mathbb{C}^k) = \bigcap_{o \in \R} S^o(S^1,\mathbb{C}^k).$$
The set $S(S^1,\mathbb{C}^k) / S^{-\infty}(S^1,\mathbb{C}^k)$ can be understood as 
the set of asymptotic expansions when 
$\xi \rightarrow \pm \infty$ up to rapidly decreasing
 maps in the $\xi$ variable. Then, we define the following multiplication
 rule some equivalence classes of maps $\sigma$ and $\sigma'$ in 
$S(S^1,\C^k) / S^{-\infty}(S^1,\C^k)$: 
\begin{equation} \sigma \circ \sigma' = \label{comp2} \sum_{(\alpha)\in \N} {(-i)^\alpha \over \alpha !}
D^\alpha_\xi \sigma D^\alpha_x \sigma'. \end{equation}

\vskip 6pt
\noindent
\textbf{Notations.} 
We note by   $Cl(S^1,\mathbb{C}^k)$) classical pseudo-differential operators acting on smooth
sections of $E$, and by $Cl^o(S^1,\mathbb{C}^k)$ the space of classical 
pseudo-differential operators of order $o$.
\vskip 10pt

There is not an isomorphism between the set of symbols an the set of pseudo-differential operators. 
If we set
$$ Cl^{-\infty}(S^1,\mathbb{C}^k) = \bigcap_{o \in \Z} Cl^o(S^1,\mathbb{C}^k),$$
we notice that it is a two-sided ideal of $Cl(S^1,\mathbb{C}^k)$, and we define the quotient algebra
$$\mathcal{F}Cl(S^1, \C^k) = Cl(S^1,\mathbb{C}^k) / Cl^{-\infty}(S^1,\mathbb{C}^k),$$ 
called the algebra of formal pseudo-differential operators. 
$\mathcal{F}Cl(S^1,\C^k)$ is isomorphic to the set of formal symbols \cite{BK}, 
and the identification is a morphism of $\C$-algebras, for the multiplication on formal symbols
defined before.
\subsection{Renormalized traces} \label{s3}

$E$ is equiped this an Hermitian products $<.,.>$,
which induces the following $L^2$-inner product on sections of $E$: 
$$ \forall u,v \in C^\infty(S^1,E), \quad (u,v)_{L^2} = \int_{S^1} <u(x),v(x)> dx, $$
where $dx$ is the Riemannian volume. 
\begin{Definition}
	$Q$ is a \textbf{weight} of order $s>0$ on $E$ if and only if $Q$ is a classical, elliptic,
	self-adjoint, positive pseudo-differential operator acting on 
	smooth sections of $E$.
\end{Definition}
Recall that, under these assumptions, the weight $Q$ has a real discrete spectrum, and that 
all its eigenspaces are finite dimensional. 
For such a weight $Q$ of order $q$, one can define the complex 
powers of $Q$ \cite{See}, 
see e.g. \cite{CDMP} for a fast overview of technicalities. 
The powers $Q^{-s}$ of the weight $Q$ 
are defined for $Re(s) > 0$ using with a contour integral,
$$ Q^{-s} = \int_\Gamma \lambda^s(Q- \lambda Id)^{-1} d\lambda,$$
where $\Gamma$ is a contour around the real positive axis.
Let $A$ be a log-polyhomogeneous pseudo-differential operator.  The map
$\zeta(A,Q,s) = s\in \C \mapsto \hbox{tr} \left( AQ^{-s} \right)\in \C$ , 
defined for $Re(s)$ large, extends
on $\C$ to a meromorphic function \cite{Le}. When $A$ is classical, 
$\zeta(A,Q,.)$ has a simple pole at $0$  
with residue ${1 \over q} \res_W A$, where $\res_W$ is the Wodzicki
residue (\cite{W}, see also \cite{Ka}). Notice that the Wodzicki residue extends the Adler trace \cite{Adl} on formal symbols.
Following textbooks \cite{PayBook,Scott} for the renormalized trace of 
classical operators, we define

\begin{Definition} \label{d6} Let $A$ be a log-polyhomogeneous pseudo-differential operator. 
	The finite part of $\zeta(A,Q,s)$ at $s = 0$ is called the renormalized trace
	$\tr^Q A$. If $A$ is a classical pseudo-differential operator,
	$$\tr^Q A = lim_{s \rightarrow 0} (\hbox{tr} (AQ^{-s})
	- {1 \over qs} \res_W (A).$$
\end{Definition}

If $A$ is trace class acting on $L^2(S^1,\C^k)$, 
$\hbox{tr}^Q{(A)}=\hbox{tr}{(A)}$.
The functional $\hbox{tr}^Q$ is of course not a trace.
In this formula, it appears that the Wodzicki residue $\res_W(A).$ 

\begin{Proposition} \label{p5}
	
	\begin{item}
		
		(i) The Wodzicki residue $\res_W$ is a trace on the algebra of
		classical pseudo-differential operators $Cl(S^1,E)$, i.e. $\forall
		A,B \in Cl(S^1,V), \res_W[A,B]=0.$
		
	\end{item}

	\begin{item}
		
		(ii) (local formula for the Wodzicki residue) Moreover, if $A \in Cl(S^1,V)$,
		$$ \res_W A = {1 \over 2\pi} \int_{S^1} \int_{|\xi|=1} tr \sigma_{-1}(x,\xi) d\xi dx = {1 \over 2\pi} \sum_{\xi = \pm 1} \int_{S^1}  tr \sigma_{-1}(x,\xi) d\xi dx. $$
		In particular, $\res_W$ does not depend on the choice
		of $Q$.
	\end{item}

		

\end{Proposition}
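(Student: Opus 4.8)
}
The plan is to establish both items from the local description of the zeta function near $s=0$, together with standard facts about the symbol calculus on $S^1$. For item (i), I would first recall that for classical $A$ of order $a$ and $B$ of order $b$, the renormalized trace satisfies the defect formula $\tr^Q[A,B] = -\frac{1}{q}\res_W\!\left(A[\log Q, B]\right)$ (see \cite{CDMP,PayBook}); since $[\log Q,B]$ is classical, the right-hand side is a Wodzicki residue. Hence it suffices to show $\res_W$ vanishes on \emph{any} commutator, i.e.\ that $\res_W$ is tracial. The cleanest route is the local formula in (ii): once (ii) is proven, $\res_W[A,B]$ is expressed as the integral over $S^1\times\{\pm1\}$ of the trace of the order $-1$ homogeneous component of the symbol of $[A,B]$, and one checks from the composition formula \eqref{comp2} that this component, integrated over $x\in S^1$ and summed over $\xi=\pm1$, is a total $x$-derivative plus a term that cancels by the $\xi\to-\xi$ symmetry; this is the classical computation of Wodzicki. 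Alternatively, and more economically, one invokes that $\res_W$ is independent of $Q$ (from (ii)) and uses $\res_W[A,B]=\res_W[A,B]$ evaluated via $\tr^Q$ for two different weights $Q_1,Q_2$: the difference $\tr^{Q_1}-\tr^{Q_2}$ is itself $\frac{1}{q}\res_W(A(\cdots))$-type and one gets a closed linear system forcing $\res_W[A,B]=0$. I would present the local-formula argument as the primary one.

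For item (ii), I would argue directly from Definition \ref{d6}. The residue of $\zeta(A,Q,s)$ at $s=0$ is, by Seeley's analysis \cite{See} and the local nature of the heat/zeta expansion \cite{Le}, computed from the symbol of $AQ^{-s}$: the pole comes entirely from the term in the symbol expansion that is homogeneous of degree $-\dim S^1 = -1$ in $\xi$ after the $Q^{-s}$ twist, and taking the finite part in $s$ isolates $\int_{S^1}\int_{|\xi|=1} \tr\,\sigma_{-1}(x,\xi)\,d\xi\,dx$ up to the normalizing constant $\frac{1}{q}$ from the order of $Q$. Dividing out $q$ (as in the statement's normalization of $\res_W$) gives the stated formula. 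The passage from $\int_{|\xi|=1}$ to the sum $\sum_{\xi=\pm1}$ is immediate because on $S^1$ the cosphere bundle is the two-point set $\{\pm1\}$ in each fiber. Since the right-hand side manifestly involves only $\sigma_{-1}$ and not $Q$, independence of the choice of weight follows at once; this also retroactively justifies writing $\res_W$ without a superscript.

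The main obstacle I anticipate is item (i): the defect formula $\tr^Q[A,B]=-\frac1q\res_W(A[\log Q,B])$ requires knowing that $[\log Q, B]\in Cl(S^1,E)$ — i.e.\ that $\log Q$, though not classical, has classical commutators — and that the finite-part functional behaves well under this manipulation (the log-polyhomogeneous machinery of \cite{Le} is what makes $\zeta(A[\log Q,B],Q,s)$ meaningful with at worst a double pole, whose finite part is controlled). Once that structural input is in hand, reducing to the vanishing of $\res_W$ on commutators and then killing it via the explicit local formula in (ii) is routine: the composition formula \eqref{comp2} shows that $\sigma_{-1}([A,B])$ differs from an expression antisymmetric under $(A,B)\mapsto(B,A)$ only by $\partial_x$-exact terms, and integration over the closed manifold $S^1$ plus the finite cosphere fiber sum annihilates both contributions. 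I would keep the symbol bookkeeping to a minimum by citing \cite{W,Ka} for the general-manifold statement and only spelling out the one-dimensional simplification.
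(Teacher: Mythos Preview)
The paper does not prove Proposition \ref{p5}: it is stated as a classical fact, with the Wodzicki residue introduced a few lines earlier via \cite{W,Ka}, and the subsequent Proposition \ref{p6} is likewise quoted from \cite{CDMP,Ma2016}. So there is no proof in the paper to compare your proposal against; your write-up is being measured against the standard literature arguments you are effectively reconstructing.

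Your outline for (ii) is the correct one and would pass as a sketch with the references you give. For (i), however, the opening paragraph is a non-sequitur: the defect formula $\tr^Q[A,B]=-\frac1q\res_W(A[\log Q,B])$ concerns $\tr^Q$ on commutators, not $\res_W$ on commutators, so invoking it and then concluding ``hence it suffices to show $\res_W$ vanishes on any commutator'' is just restating (i). (In the paper's logical order this formula is Proposition \ref{p6}, quoted \emph{after} \ref{p5}.) Your ``alternative'' route comparing two weights $Q_1,Q_2$ does not close either: knowing that $\tr^{Q_1}-\tr^{Q_2}$ is a residue-type functional does not force $\res_W[A,B]=0$ without already assuming traciality somewhere. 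Drop both detours and go straight to the local-formula argument.

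That argument is the right strategy, but your description contains a wrong claim: there is no ``$\xi\to-\xi$ symmetry'' that makes a piece of $\sigma_{-1}([A,B])$ cancel between the two cosphere points. On $S^1$ the cosphere fibre is $\{+1,-1\}$, and the two contributions are handled \emph{separately}, not against each other: restricting to $\xi>0$ (resp.\ $\xi<0$) via the projections $p_\pm$ of the paper, one lands in the formal symbol algebra where the Adler residue (coefficient of $\xi^{-1}$ integrated over $S^1$) is tracial by the standard one-variable computation --- the terms coming from \eqref{comp2} with $\alpha\ge1$ integrate to zero over $S^1$ by parts in $x$. The paper itself remarks that $\res_W$ ``extends the Adler trace \cite{Adl} on formal symbols'', which is exactly this reduction. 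Either carry out that short one-dimensional computation explicitly, or cite \cite{W,Ka,Adl} as the paper does; but do not invoke a $\xi\mapsto-\xi$ cancellation that is not there.
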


Since $\tr^Q$ is a linear extension of the classical trace $\tr$ of trace-class operators acting on $L^2(S^,V),$ it has weaker properties. Let us summarize some of them which are of interest for our work following first \cite{CDMP}, completed by \cite{Ma2016} for the third point.

\begin{Proposition} \label{p6}

	\begin{itemize}
		
		\item   Given two (classical) pseudo-differential operators A and B,
	given a weight Q,
	\begin{equation}\label{crochet}  \tr^Q[A,B] = -{1 \over q} \res (A[B,\log Q]). \end{equation}
	\item Given  a differentiable family $A_t$ of pseudo-differential
	operators, given a differentiable family $Q_t$ of weights of
	constant order q,
	
	\begin{equation}\label{deriv} {d \over dt} \left(tr^{Q_t}A_t\right) = tr^{Q_t} \left({d \over dt}A_t\right) -
	{1 \over q} \res \left( A_t ({d \over dt}\log Q_t) \right).
	\end{equation}

		\item Under the previous notations, if C is a classical elliptic
		injective operator or a diffeomorphism, $tr^{C^{-1}QC}\left( C^{-1}AC
		\right)$ is well-defined and equals $\tr^QA$. 
	
\item Finally, $$\tr^QA = \overline{\tr^{Q^*}A^*}.$$
\end{itemize}
	
\end{Proposition}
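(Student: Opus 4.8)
\section*{Proof proposal for Proposition \ref{p6}}

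These four statements are the standard defect properties of the renormalized trace, recalled essentially from \cite{CDMP} (and from \cite{Ma2016} for the third point); the plan is to give the arguments in the same spirit. The common thread is that each identity is first established on the half-plane $\{Re(s)\gg 0\}$, where $AQ^{-s}$ is trace class and the ordinary trace is cyclic, and then transported to $s=0$ by meromorphic continuation and passage to the finite part, Proposition \ref{p5} being used to keep track of the Wodzicki residue. One also uses repeatedly that $\log Q$ is log-polyhomogeneous of logarithmic order $1$ whose log-coefficient is a scalar multiple of $\log|\xi|\cdot\mathrm{Id}$, so that $[B,\log Q]$ (for $B$ classical) and $\frac{d}{dt}\log Q_t$ (for a family of weights of constant order $q$) are genuine \emph{classical} pseudo-differential operators; this is precisely what makes the right-hand sides of \eqref{crochet} and \eqref{deriv} meaningful.

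For \eqref{crochet}, for $Re(s)$ large cyclicity of $\tr$ gives $\tr([A,B]Q^{-s}) = \tr\!\big(B(Q^{-s}AQ^{s}-A)Q^{-s}\big)$. Expanding $Q^{-s}AQ^{s}=e^{-s\log Q}Ae^{s\log Q}=A-s[\log Q,A]+s^{2}R(s)$ (Duhamel, or the series in $s\log Q$), where $R(s)Q^{-s}$ has at most a simple pole at $0$, one obtains $\tr([A,B]Q^{-s}) = -s\,\tr\!\big(B[\log Q,A]Q^{-s}\big)+s^{2}\,\tr\!\big(BR(s)Q^{-s}\big)$. Since $B[\log Q,A]$ is classical, $\tr(B[\log Q,A]Q^{-s})$ has a simple pole at $0$ of residue $\tfrac1q\res_W(B[\log Q,A])$, so the first term contributes $-\tfrac1q\res_W(B[\log Q,A])$ to the finite part at $0$ and the second contributes $0$; on the other hand $\res_W[A,B]=0$ by Proposition \ref{p5}(i), so $\tr([A,B]Q^{-s})$ is in fact regular at $0$ with value $\tr^Q[A,B]$. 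It then remains to rewrite $\res_W(B[\log Q,A]) = \res_W(A[B,\log Q])$, which follows from a computation with the local formula of Proposition \ref{p5}(ii) together with the explicit form of the log-divergence of $\sigma(\log Q)$ — plain cyclicity of $\res_W$ does not apply here since $\log Q$ is not classical.

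For \eqref{deriv}, differentiate $s\mapsto\tr(A_tQ_t^{-s})$ in $t$ for $Re(s)$ large: Duhamel's formula gives $\frac{d}{dt}Q_t^{-s}=-s\int_0^1 e^{-us\log Q_t}\big(\tfrac{d}{dt}\log Q_t\big)e^{-(1-u)s\log Q_t}\,du$, and inserting this, cycling $\tr$ under the integral and then collapsing it, one finds $\tr\big(A_t\tfrac{d}{dt}Q_t^{-s}\big)=-s\,\tr\big(A_t(\tfrac{d}{dt}\log Q_t)Q_t^{-s}\big)+s^{2}\cdot(\text{simple pole})$, whose finite part at $0$ is $-\tfrac1q\res_W\big(A_t\tfrac{d}{dt}\log Q_t\big)$; adding the contribution $\tr^{Q_t}\big(\tfrac{d}{dt}A_t\big)$ from $\tr(\dot A_tQ_t^{-s})$ yields \eqref{deriv}, once one knows that the finite part commutes with $\frac{d}{dt}$, which holds because $\zeta(A_t,Q_t,\cdot)$ has a simple pole at $0$ with $t$-differentiable Laurent coefficients. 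For the third point: $C^{-1}QC$ is again a classical elliptic operator with the same positive discrete spectrum (self-adjoint for the inner product transported by $C$), so its complex powers exist and $(C^{-1}QC)^{-s}=C^{-1}Q^{-s}C$; for $Re(s)$ large $AQ^{-s}$ is smoothing, so $\tr\big(C^{-1}AC(C^{-1}QC)^{-s}\big)=\tr\big(C^{-1}(AQ^{-s})C\big)=\tr(AQ^{-s})$ by cyclicity (legitimate even for unbounded $C$, all operators in sight being smoothing), hence the meromorphic continuations coincide, and the subtracted Wodzicki residues agree since $\res_W(C^{-1}AC)=\res_W(A)$ by Proposition \ref{p5}. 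For the last point: for real $s\gg 0$, $\overline{\tr(AQ^{-s})}=\tr\big((AQ^{-s})^{*}\big)=\tr\big((Q^{*})^{-s}A^{*}\big)=\tr\big(A^{*}(Q^{*})^{-s}\big)$ by cyclicity; by Schwarz reflection this identifies $\overline{\zeta(A,Q,\bar s)}$ with $\zeta(A^{*},Q^{*},s)$ as meromorphic functions, so their finite parts at the real point $s=0$ are complex conjugate, while the residue terms match because $\res_W(A^{*})=\overline{\res_W(A)}$, a short computation from the local formula in which every $x$-derivative correction to $\sigma_{-1}$ integrates to zero over $S^1$.

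I expect the genuine obstacle to be the Wodzicki-residue manipulation inside \eqref{crochet}: because $\log Q$ is only log-polyhomogeneous one may not cycle it under $\res_W$, and the identity $\res_W(B[\log Q,A])=\res_W(A[B,\log Q])$ has to be extracted from the explicit local formula and the precise log-divergence of $\sigma(\log Q)$; the companion point — justifying that the finite part commutes with $\frac{d}{dt}$ in \eqref{deriv} — is the other place requiring real care, everything else being bookkeeping with the trace-class region.
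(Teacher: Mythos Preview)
The paper does not prove this proposition at all: it is stated as a summary of known results, attributed to \cite{CDMP} for the first, second and fourth items and to \cite{Ma2016} for the third, with no \texttt{proof} environment. Your sketch is the standard argument from those references and is essentially correct; in particular your identification of the two genuine difficulties (the $\res_W$ manipulation with $\log Q$ in \eqref{crochet}, and commuting the finite part with $\frac{d}{dt}$ in \eqref{deriv}) is accurate.

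One small remark on the residue identity you flag: once you know that $[A,\log Q]$ and $[B,\log Q]$ are classical, the Leibniz-type identity $A[B,\log Q] = [AB,\log Q] - [A,\log Q]B$ together with cyclicity of $\res_W$ on \emph{classical} operators (Proposition \ref{p5}(i)) reduces $\res_W(A[B,\log Q]) = -\res_W(B[A,\log Q])$ to the single fact $\res_W([AB,\log Q])=0$. That last vanishing is exactly the ``log-divergence'' computation you allude to: writing $\sigma(\log Q)=q\log|\xi|\cdot\mathrm{Id}+(\text{classical})$, the classical part contributes a vanishing residue by Proposition \ref{p5}(i), and the $\log|\xi|$ part gives a total $x$-derivative in the $\sigma_{-1}$ slot which integrates to zero over $S^1$. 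So your plan closes, and is rather more explicit than what the paper provides.
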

 
Since $\tr^Q$ is not tracial, let us give more details on the renormalized trace of the bracket, following \cite{Ma2006}.

\begin{Definition} \label{d8}
	
	Let $E$ be a vector bundle over $S^1,$ let $Q$ a weight and let $a \in \Z$. We define :
	$$ {\mathcal A}^Q_a=\{B \in Cl(S^1,E); [B,\log Q] \in Cl^a(S^1,E)\}.$$
	
\end{Definition}

\begin{Theorem} \label{t1}
	
	\begin{item}
		
		(i) ${\mathcal A}^Q_a \cap Cl^0(S^1,E) $ is an subalgebra of $Cl(S^1,E)$
		with unit.
	\end{item}
	
	\begin{item}
		
		(ii) Let $B \in Ell^*(S^1,E)$, $B^{-1}{\mathcal A}^Q_aB = {\mathcal A}^{B^{-1}QB}_a.$
		
	\end{item}
	
	\begin{item}
		
		(iii) Let $A\in Cl^b(S^1,E)$, and $B \in {\mathcal A}^Q_{-b-2}$,
		then $\tr^Q[A,B]=0.$ As a consequence, $$\forall (A,B) \in Cl^{-\infty}(S^1,V) \times Cl(S^1,V), \quad \tr^Q[A,B]=0.$$
	\end{item}
\end{Theorem}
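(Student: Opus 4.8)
The plan is to prove the three items in order, since (iii) rests on the algebraic structure established in (i)--(ii) and on the commutator formula \eqref{crochet} from Proposition \ref{p6}. For item (i), I would check that ${\mathcal A}^Q_a \cap Cl^0(S^1,E)$ is closed under the operations of $Cl(S^1,E)$. The key point is the Leibniz-type identity $[B_1B_2,\log Q] = B_1[B_2,\log Q] + [B_1,\log Q]B_2$; if $B_1,B_2 \in {\mathcal A}^Q_a \cap Cl^0$, then each summand is a product of an order-$0$ operator with an order-$a$ operator, hence of order $\le a$, so the product lies in ${\mathcal A}^Q_a$, and it is visibly of order $0$. Stability under linear combinations is immediate from linearity of $B \mapsto [B,\log Q]$, and $Id \in {\mathcal A}^Q_a$ since $[Id,\log Q]=0 \in Cl^a$ for any $a$. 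Note one uses here that $\log Q$, while not classical, has the property that its bracket with a classical operator is classical of predictable order --- a fact implicitly available from the cited literature (\cite{CDMP}, \cite{Ma2006}) and the structure of $\log Q = \frac{d}{ds}\big|_{s=0} Q^s$.

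For item (ii), I would use the conjugation formula $[B^{-1}AB, \log(B^{-1}QB)] = B^{-1}[A,\log Q]B$, which follows from $\log(B^{-1}QB) = B^{-1}(\log Q)B$ (valid since $B$ is an invertible elliptic operator, so conjugation commutes with the holomorphic functional calculus defining the logarithm). Then $A \in {\mathcal A}^{B^{-1}QB}_a$ iff $[A,\log(B^{-1}QB)] \in Cl^a$ iff $B^{-1}[BAB^{-1},\log Q]B \in Cl^a$; since conjugation by $B \in Ell^*$ preserves order (it is an isomorphism of $Cl(S^1,E)$ preserving the order filtration), this holds iff $[BAB^{-1},\log Q] \in Cl^a$, i.e. $BAB^{-1} \in {\mathcal A}^Q_a$, which is exactly $A \in B^{-1}{\mathcal A}^Q_a B$. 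So the two sets coincide.

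For item (iii), the main work is the degree bookkeeping feeding into \eqref{crochet}. Given $A \in Cl^b$ and $B \in {\mathcal A}^Q_{-b-2}$, formula \eqref{crochet} gives $\tr^Q[A,B] = -\frac1q \res_W\!\big(A[B,\log Q]\big)$. By hypothesis $[B,\log Q] \in Cl^{-b-2}$, so $A[B,\log Q] \in Cl^{b + (-b-2)} = Cl^{-2}$. Now invoke the local formula from Proposition \ref{p5}(ii): the Wodzicki residue on $S^1$ only sees the homogeneous component of order $-1$ of the symbol, and an operator of order $-2$ has vanishing symbol component of order $-1$; hence $\res_W\big(A[B,\log Q]\big) = 0$, so $\tr^Q[A,B]=0$. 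For the consequence, take $A \in Cl^{-\infty}$ and $B \in Cl(S^1,V)$ arbitrary: writing $B$'s order as $b'$, we have $[A,B] = [B,A]$ up to sign, and it is cleaner to observe directly that $A \in Cl^{-\infty} \subset {\mathcal A}^Q_{a}$ for every $a$ (since $[A,\log Q]$ is smoothing --- the bracket of a smoothing operator with $\log Q$ is again smoothing, as $Cl^{-\infty}$ is a two-sided ideal stable under the relevant operations). Thus for any classical $B$ of order $b'$, $A \in {\mathcal A}^Q_{-b'-2}$, and applying the first part of (iii) with the roles arranged so that the non-smoothing factor has order $b'$ gives $\tr^Q[B,A] = 0$, hence $\tr^Q[A,B]=0$.

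\textbf{Expected main obstacle.} The routine filtration arithmetic is harmless; the delicate point throughout is controlling $[\,\cdot\,,\log Q]$, since $\log Q$ is not a classical pseudo-differential operator (it is log-polyhomogeneous of logarithmic order $1$). One must be sure that (a) bracketing a classical operator against $\log Q$ lands back in the classical algebra with the claimed order, and (b) this bracket behaves well under conjugation by elliptic invertibles. Both are standard consequences of Seeley's construction of complex powers and the structure theory recalled before Definition \ref{d6}, but they are the substantive inputs rather than the formal manipulations, so I would be careful to cite \cite{CDMP}, \cite{Ma2006} at exactly those two moments.
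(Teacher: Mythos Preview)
The paper does not actually give a proof of this theorem; it is stated as a result ``following \cite{Ma2006}'' and used without argument. So there is no in-paper proof to compare against, and your task reduces to whether your argument is sound on its own.

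Your proof is correct and is essentially the standard one. A couple of minor remarks. In (i) you implicitly need $a \le 0$ (or at least that $Cl^a$ is closed under addition in the filtration sense) for the sum $B_1[B_2,\log Q] + [B_1,\log Q]B_2$ to land in $Cl^a$; this is automatic since each summand is already in $Cl^{0+a}=Cl^a$, so no issue. In (ii), the statement ``conjugation by $B \in Ell^*$ preserves order'' is true because if $B \in Cl^k$ then $B^{-1} \in Cl^{-k}$, so $B^{-1}CB \in Cl^{-k+m+k} = Cl^m$; you might make this explicit. In (iii), the only point worth flagging is that formula \eqref{crochet} is stated in Proposition \ref{p6} for \emph{classical} $A,B$, and $\log Q$ is only log-polyhomogeneous; but the formula itself only requires $[B,\log Q]$ to be classical (which is precisely the content of the hypothesis $B \in {\mathcal A}^Q_{-b-2}$), so the residue $\res_W(A[B,\log Q])$ is well-defined and the order count $b+(-b-2)=-2 < -1 = -\dim S^1$ kills it. Your handling of the smoothing case via $[A,\log Q] \in Cl^{-\infty}$ (two-sided ideal property) and role-swapping is clean.

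In short: the paper outsources the proof to \cite{Ma2006}, and your argument reconstructs the expected one correctly.
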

When needed and appropriate, other properties of renormalized traces will be given later. 

\subsection{Splittings of ${\mathcal F}Cl^{}(S^1,V)$}
\subsubsection{The polarization operator}
The operator $D = {-i} \frac{d}{dx}$ splits $C^\infty(S^1, \C^n)$ into three spaces :

- its kernel $E_0,$ built of constant maps

- $E_+$, the vector space spanned by eigenvectors related to positive eigenvalues

- $E_-$, the vector space spanned by eigenvectors related to negative eigenvalues.

The $L^2-$orthogonal projection on $E_0$ is a smoothing operator, which has null formal symbol. By the way, concentrating our attention on the formal symbol of operators first, we can ignore this projection and hence we work on $E_+ \oplus E_-$. The following elementary result will be useful for the sequel.  
\begin{Lemma} \label{l1} \cite{Ma2003,Ma2006-2}
	
	(i) $\sigma(D) = {\xi }$
	
	(ii) $\sigma(|D|) = {|\xi| }$
	
	(iii)  $\sigma(\epsilon) = {\xi \over |\xi|}$, where $\epsilon = D|D|^{-1} = |D|^{-1}D$ is the sign of D.
	
	(iv)  Let $p_{E_+}$ (resp. $p_{E_-}$) be the projection on $E_+$ (resp. $E_-$), then 
	$\sigma(p_{E_+}) ={1 \over 2}(Id + {\xi \over |\xi|})$ and $\sigma(p_{E_-}) = {1 \over 2}(Id - {\xi \over |\xi|})$.
\end{Lemma}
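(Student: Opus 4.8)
The plan is to compute the formal symbols directly from the definitions, exploiting the fact that on $S^1$ the operator $D = -i\,d/dx$ acts on the standard Fourier basis $e^{inx}$ (read in the local charts) by multiplication by $n$, so its symbol is simply the multiplier $\xi$. First I would note that $D$ is a differential operator of order $1$ with constant coefficients, hence its full symbol is exactly $\sigma(D)(x,\xi) = \xi$ with no lower-order terms; this gives (i). For (ii), since $|D| = \sqrt{D^2}$ is the positive square root of the weight $D^2$ (order $2$, with symbol $\xi^2$), I would invoke the complex-power/functional-calculus construction recalled in the preliminaries: on the eigenspace $E_n$ of $D$ with eigenvalue $n \neq 0$, $|D|$ acts by $|n|$, so modulo smoothing operators (which kill the zero mode $E_0$, a smoothing operator with null formal symbol as already observed) the symbol is $|\xi|$. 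One checks that $|\xi|$ is indeed a classical symbol away from $\xi = 0$ and that the composition rule \eqref{comp2} gives $|\xi| \circ |\xi| = \xi^2 = \sigma(D^2)$, pinning down $\sigma(|D|) = |\xi|$ up to the sign ambiguity resolved by positivity.

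For (iii), $\epsilon = D|D|^{-1}$; using that the formal-symbol identification is an algebra morphism (stated at the end of the first subsection), $\sigma(\epsilon) = \sigma(D) \circ \sigma(|D|^{-1})$. Here $\sigma(|D|^{-1}) = |\xi|^{-1}$ modulo smoothing (again the zero mode is irrelevant formally), and since $\xi$ and $|\xi|^{-1}$ are both functions of $\xi$ alone, every term with a $D_\xi^\alpha D_x^\alpha$ for $\alpha \geq 1$ in \eqref{comp2} vanishes, so the star-product degenerates to the pointwise product $\xi \cdot |\xi|^{-1} = \xi/|\xi|$. The same remark — that all these symbols depend only on $\xi$, so $\circ$ reduces to ordinary multiplication — makes (iv) immediate: $p_{E_+} = \tfrac12(\mathrm{Id} + \epsilon)$ and $p_{E_-} = \tfrac12(\mathrm{Id} - \epsilon)$ as operators (this is the spectral decomposition of $D$ into positive/negative parts, modulo $E_0$), and applying $\sigma$, which is linear, together with $\sigma(\mathrm{Id}) = \mathrm{Id}$ and part (iii), yields $\sigma(p_{E_\pm}) = \tfrac12(\mathrm{Id} \pm \xi/|\xi|)$.

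The only genuine subtlety — the "hard part," though it is more a matter of bookkeeping than depth — is the treatment of the zero mode $E_0$ and the behavior near $\xi = 0$: $|D|$, $|D|^{-1}$ and $\epsilon$ are not literally pseudo-differential operators in the naive sense because of the kernel of $D$, so all identities must be read modulo $Cl^{-\infty}(S^1,\C^k)$, i.e. at the level of formal symbols in $\mathcal{F}Cl(S^1,\C^k)$, after composing with the $L^2$-projection off $E_0$. Since that projection is smoothing with null formal symbol (as recalled just before the lemma), passing to formal symbols is harmless, and the homogeneity of $\xi$, $|\xi|$, $\xi/|\xi|$ in the region $|\xi| \geq 1$ is all that is needed for them to be legitimate (partial) symbols. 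I would therefore phrase the proof as: reduce to $E_+ \oplus E_-$, use functional calculus to identify the action on Fourier modes, and then read off the symbols, noting that the star-product collapses to ordinary multiplication because every symbol in sight is $x$-independent. This also matches the citations \cite{Ma2003,Ma2006-2}, where the computation appears.
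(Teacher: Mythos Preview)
The paper does not give a proof of this lemma at all: it is stated with the citation \cite{Ma2003,Ma2006-2} and used as input, so there is nothing to compare against. Your argument is correct and is exactly the kind of elementary verification one would expect --- differential operator gives (i), functional calculus on Fourier modes gives (ii), the algebra morphism $\sigma$ together with $x$-independence collapses the star-product to pointwise multiplication for (iii), and linearity of $\sigma$ plus the spectral decomposition $p_{E_\pm} = \tfrac12(\mathrm{Id}\pm\epsilon)$ gives (iv). Your care about the zero mode $E_0$ and the passage to formal symbols in $\mathcal{F}Cl$ is appropriate and matches the paper's own remark preceding the lemma that the projection onto $E_0$ is smoothing with null formal symbol.
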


\begin{rem}
	The operators $D$ and $\epsilon$ are operators acting on $L^2(S^1,\C)$ while $\frac{d}{dx}$ and $i\epsilon$ are operators on $L^2(S^1,\C)$ which leave invariant $L^2(S^1,\R).$
\end{rem}

Let us now give a trivial but very useful lemma:
\begin{Lemma}\label{l2} \cite{Ma2003}
	Let $f: \R^* \rightarrow V$ be a 0-positively homogeneous function with values in a topological vector space $V$. Then, for any $n \in \N^*$, $f^{(n)} = 0$ where $f^{(n)}$ denotes the n-th derivative of $f$.
\end{Lemma}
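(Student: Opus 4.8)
The statement to prove is Lemma \ref{l2}: if $f : \R^* \to V$ is $0$-positively homogeneous with values in a topological vector space $V$, then all derivatives $f^{(n)}$ for $n \geq 1$ vanish.

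The plan is to exploit the homogeneity relation $f(t\xi) = f(\xi)$ for all $t > 0$ and $\xi \in \R^*$. The cleanest route is to observe that $\R^*$ has two connected components, $\R_{>0}$ and $\R_{<0}$, and on each of them $f$ is in fact constant. Indeed, fixing $\xi = 1$, the relation $f(t \cdot 1) = f(1)$ for all $t > 0$ says precisely that $f$ is the constant $f(1)$ on $\R_{>0}$; similarly $f(t \cdot (-1)) = f(-1)$ shows $f$ is the constant $f(-1)$ on $\R_{<0}$. A constant map into a topological vector space has zero derivative, and hence all higher derivatives vanish as well. This immediately gives $f^{(n)} = 0$ on $\R^*$ for every $n \in \N^*$.

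Alternatively, if one prefers to argue via differentiation of the functional equation directly: differentiate $f(t\xi) = f(\xi)$ with respect to $\xi$ to get $t f'(t\xi) = f'(\xi)$, i.e. $f'$ is $(-1)$-positively homogeneous; then set $t = 1/|\xi|$ and let the argument run to $\pm 1$, and use continuity of $f'$ together with the fact that $t f'(t\xi) \to 0 \cdot f'(\pm 1) = 0$ as $t \to 0^+$ (taking $\xi$ fixed, $t \to 0^+$) to conclude $f'(\xi) = 0$. Then one iterates: once $f' \equiv 0$, trivially $f^{(n)} = 0$ for all $n \geq 1$. The first argument is shorter and is the one I would write, but it is worth noting the homogeneity degree drops by one under each differentiation, which is the structural reason the lemma holds.

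There is essentially no obstacle here — the only thing to be slightly careful about is that the statement is about $\R^*$, not a punctured neighborhood of $0$ or all of $\R$, so there is no issue of the function failing to extend across the origin; the two-component structure of the domain is exactly what makes "$0$-homogeneous" equivalent to "locally constant", and local constancy trivially yields vanishing of all derivatives. I would phrase the proof in two sentences: reduce to each half-line, note $f$ is constant there, and invoke that the derivative of a constant map valued in a topological vector space is zero.
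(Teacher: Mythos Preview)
Your main argument is correct: $0$-positive homogeneity means $f(t\xi)=f(\xi)$ for all $t>0$, so $f$ is constant on each of the two half-lines $\R_{>0}$ and $\R_{<0}$, and hence all derivatives vanish. The paper does not actually prove this lemma---it simply quotes it from \cite{Ma2003}---so there is no in-paper argument to compare against; your two-sentence proof is exactly what one would supply.

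One caution about your alternative route: the step ``$t f'(t\xi)\to 0\cdot f'(\pm 1)$ as $t\to 0^+$'' is not legitimate, because $t\xi\to 0\notin\R^*$ and $f'$ need not extend (or stay bounded) there. If you want a differentiation-based argument, differentiate $f(t\xi)=f(\xi)$ in $t$ instead, obtaining $\xi f'(t\xi)=0$, and set $t=1$. Either way, the locally-constant argument you lead with is the cleanest and should be the one you keep.
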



\begin{Cor} \label{0-hom}
	Let $A \in \F Cl(S^1,\C^n)$ such that $\sigma(A) = \sigma_0(A).$ then $$\sigma(A\circ B) = \sigma_0(A). \sigma(B) \hbox{ (pointwise multiplication).}$$
\end{Cor}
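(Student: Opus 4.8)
The plan is to compute the formal symbol of the composition $A \circ B$ directly from the multiplication rule \eqref{comp2} and to exploit the homogeneity hypothesis on $A$ together with Lemma \ref{l2}. Recall that $\sigma(A \circ B) = \sum_{\alpha \in \N} \frac{(-i)^\alpha}{\alpha!} D_\xi^\alpha \sigma(A) \, D_x^\alpha \sigma(B)$. The hypothesis is that $\sigma(A) = \sigma_0(A)$, i.e. the full symbol of $A$ coincides with its degree-$0$ homogeneous component; in particular $\sigma(A)$ is a $0$-positively homogeneous function of $\xi$ (for each fixed $x$).

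The key step is then to apply Lemma \ref{l2}: since $\xi \mapsto \sigma_0(A)(x,\xi)$ is $0$-positively homogeneous with values in the topological vector space $C^\infty(S^1, M_n(\C))$ (or, more naively, $M_n(\C)$ pointwise in $x$), every derivative $D_\xi^\alpha \sigma_0(A)$ with $\alpha \in \N^* = \{1, 2, \dots\}$ vanishes identically. Hence in the sum defining $\sigma(A \circ B)$ only the $\alpha = 0$ term survives, giving $\sigma(A \circ B) = \sigma(A) \cdot \sigma(B) = \sigma_0(A) \cdot \sigma(B)$, which is exactly the pointwise product claimed. I would write this out as a one-line display, being careful that the product on the right is genuine pointwise matrix multiplication of symbols and not the starred composition product.

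The only real subtlety — and the step I would flag as the place to be careful rather than a genuine obstacle — is the interplay between the local chart description of symbols and the global partial symbols $\sigma_j$ defined via $\sigma_j = \sum_{(m,n) \in \{0,1\}^2} (\sigma_{m,n})_j$. One should check that the reasoning above, carried out chartwise with the composition formula \eqref{comp2}, is compatible with this global summation; this is immediate here because the changes of coordinates on the atlas $\mathcal{A}$ are translations, so $D_\xi$ and $D_x$ commute with the chart transitions and the global symbols inherit the same homogeneity and the same vanishing of $\xi$-derivatives. A secondary point is to note that $\sigma_0(A)$ being $0$-homogeneous makes sense only away from $\xi = 0$, which is harmless since formal symbols are equivalence classes modulo $S^{-\infty}$ and only the behaviour as $|\xi| \to \infty$ matters. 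With these remarks in place the proof is essentially the single application of Lemma \ref{l2} to kill all higher terms in \eqref{comp2}.
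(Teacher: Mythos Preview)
Your proof is correct and is precisely the argument the paper intends: the corollary is stated without proof as an immediate consequence of Lemma~\ref{l2} applied to the composition formula~\eqref{comp2}, and that is exactly what you do. Your additional remarks about chart compatibility and the $\xi=0$ issue are reasonable sanity checks but are not needed for the argument as the paper presents it.
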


\subsubsection{The splitting with induced by the connected components of $T^*S^1-S^1.$} 
In this section, we define two ideals of the algebra $\mathcal{F}Cl(S^1,V)$, 
that we call $\mathcal{F}Cl_+(S^1,V)$ and $\mathcal{F}Cl_-(S^1,V)$, such that $\mathcal{F}Cl(S^1,V) = \mathcal{F}Cl_+(S^1,V) \oplus \mathcal{F}Cl_-(S^1,V)$. 
This decomposition is explicit in \cite[section 4.4., p. 216]{Ka}, and we give an explicit description here following \cite{Ma2003,Ma2006-2}. 

\begin{Definition}
	
	Let $\sigma$ be a partial symbol of order $o$ on $E$. Then, we define, for $(x,\xi) \in T^*S^1 - S^1$, 
	$$ \sigma_+(x,\xi) = \left\{ 
	\begin{array}{ll}
	\sigma(x,\xi) & \hbox{ if $ \xi > 0$} \\
	0 & \hbox{ if $ \xi < 0$} \\
	\end{array}
	\right. \hbox{ and }
	\sigma_-(x,\xi) = \left\{ 
	\begin{array}{ll}
	0 & \hbox{ if $ \xi > 0$} \\
	\sigma(x,\xi) & \hbox{ if $ \xi < 0$} . \\
	\end{array}
	\right.$$
	We define  $p_+(\sigma) = \sigma_+$ and $p_-(\sigma) = \sigma_-$ .
\end{Definition}
The maps 
$ p_+ : \mathcal{F}Cl(S^1,V) \rightarrow \mathcal{F}Cl(S^1,V) $ { and } $p_- : \mathcal{F}Cl(S^1,V) \rightarrow \mathcal{F}Cl(S^1,V)$ are clearly smooth algebra morphisms (yet non-unital morphisms) that leave the order invariant and are also projections (since multiplication on formal symbols is expressed in terms of pointwise multiplication of tensors). 

\begin{Definition} We define
	$  \mathcal{F}Cl_+(S^1,V) = Im(p_+) = Ker(p_-)$
	and $  \mathcal{F}Cl_-(S^1,V) = Im(p_-) = Ker(p_+).$ \end{Definition}
Since $p_+$ is a projection,  we have the splitting
$$ \mathcal{F}Cl(S^1,V) = \mathcal{F}Cl_+(S^1,V) \oplus \mathcal{F}Cl_-(S^1,V) .$$
Let us give another characterization of $p_+$ and $p_-$. 
Looking  more precisely at the formal symbols of $p_{E_+}$ and $p_{E_-}$ computed in Lemma \ref{l1}, we observe that 
$$ \sigma( p_{E_+}) = \left\{ \begin{array}{ll}
1 & \hbox{if }\xi > 0 \\
0 & \hbox{if }\xi < 0 \\
\end{array} \right. \hbox{ and }
\sigma( p_{E_-}) = \left\{ \begin{array}{ll}
0 & \hbox{if }\xi > 0 \\
1 & \hbox{if }\xi < 0 \\
\end{array} \right. . $$
In particular, we have that $p_+$ and $p_-$ satisfy Corollary \ref{0-hom}. Moreover, their symbol do not depend on $x.$
From this, we have the following result.

\begin{Proposition} \label{pag0}
	Let $A \in \mathcal{F}Cl(S^1,V).$ 
	$ p_+(A) =  \sigma( p_{E_+}) \circ A = A \circ \sigma( p_{E_+})$ and 
	$  p_-(A) =  \sigma( p_{E_-}) \circ A = A \circ \sigma( p_{E_-}).$
\end{Proposition}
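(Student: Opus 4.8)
The plan is to verify the stated identities by a direct computation on formal symbols, using Corollary \ref{0-hom} to control the composition with the order-zero symbols $\sigma(p_{E_+})$ and $\sigma(p_{E_-})$, and the fact that these symbols are independent of $x$. First I would record that, by the computation in Lemma \ref{l1} together with the observation made just above the Proposition, $\sigma(p_{E_+})$ is exactly the indicator function of $\{\xi>0\}$ and $\sigma(p_{E_-})$ the indicator function of $\{\xi<0\}$; in particular both are $0$-positively homogeneous, so they satisfy the hypothesis of Corollary \ref{0-hom}, and $\sigma(p_{E_+})\cdot\sigma(p_{E_-})=0$ pointwise on $T^*S^1-S^1$.

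Next I would compute $\sigma(p_{E_+})\circ A$. Since $\sigma(p_{E_+})=\sigma_0(p_{E_+})$ has no lower-order components, Corollary \ref{0-hom} gives $\sigma(p_{E_+}\circ A)=\sigma_0(p_{E_+})\cdot\sigma(A)=\mathbf 1_{\{\xi>0\}}\cdot\sigma(A)$, which is precisely $\sigma(A)$ on $\{\xi>0\}$ and $0$ on $\{\xi<0\}$ — that is, $p_+(A)$ in the sense of the Definition. For the composition on the other side, $A\circ\sigma(p_{E_+})$, the composition formula \eqref{comp2} reads $\sum_\alpha \frac{(-i)^\alpha}{\alpha!}D^\alpha_\xi\sigma(A)\,D^\alpha_x\sigma(p_{E_+})$; because $\sigma(p_{E_+})$ does not depend on $x$, every term with $\alpha\geq 1$ vanishes, leaving only $\sigma(A)\cdot\sigma(p_{E_+})=\sigma(A)\cdot\mathbf 1_{\{\xi>0\}}$, again equal to $p_+(A)$. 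The argument for $p_-$ is identical with $\sigma(p_{E_-})=\mathbf 1_{\{\xi<0\}}$.

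The only genuinely delicate point is the first composition $\sigma(p_{E_+})\circ A$: Corollary \ref{0-hom} is stated for $A$ with $\sigma(A)=\sigma_0(A)$ sitting in the left slot, and one must make sure it legitimately applies here with the full symbol of an arbitrary $A\in\mathcal FCl(S^1,V)$ in the right slot. This is exactly the content of Corollary \ref{0-hom} (the left factor $p_{E_+}$ has $0$-homogeneous symbol, so all $\xi$-derivatives of order $\geq 1$ of its symbol vanish by Lemma \ref{l2}, killing all but the leading term of \eqref{comp2}), so no extra work is needed; I would simply cite it. Finally I would note that the two expressions we obtained, $\sigma(p_{E_+})\circ A$ and $A\circ\sigma(p_{E_+})$, agree and both equal $p_+(A)$, which also re-proves that $p_{E_+}$ commutes with every formal operator at the symbol level, and likewise for $p_{E_-}$; this completes the proof.
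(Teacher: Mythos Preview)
Your proof is correct and follows exactly the approach the paper indicates in the sentence preceding the Proposition: the left composition reduces to pointwise multiplication by Corollary~\ref{0-hom} (since $\sigma(p_{E_\pm})$ is $0$-homogeneous), and the right composition reduces to pointwise multiplication because $\sigma(p_{E_\pm})$ is independent of $x$, killing all $D_x^\alpha$ terms in \eqref{comp2}. The paper does not spell this out beyond that one sentence, so your write-up is simply a faithful expansion of the intended argument.
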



\subsubsection{The ``odd-even'' splitting}

We note by $\sigma(A)(x,\xi)$ the total formal symbol of $A \in \mathcal{F}Cl(S^1,V).$ 
The following proposition is trivial:
\begin{Proposition}
	Let 
	$\phi:\mathcal{F} Cl(S^1,V)\rightarrow \mathcal{F}Cl(S^1,V)$ defined by 
	$$\phi(A) = \frac{1}{2}\sum_{k \in \mathbb{Z}}\sigma_{k}(A)(x,\xi) - (-1)^k\sigma_{k}(A)(x,-\xi).$$
	This map is smooth, and $$\Psi DO(S^1,V)=\mathcal{F} Cl_{ee}(S^1,V) = Ker(\phi).$$
\end{Proposition}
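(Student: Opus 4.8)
The plan is to show that $\phi$ is a well-defined algebra morphism-free \emph{linear projection} on $\mathcal{F}Cl(S^1,V)$ whose kernel is exactly the even-even class, and that smoothness is inherited from the smoothness of the operations $\sigma_k(A)(x,\xi) \mapsto \sigma_k(A)(x,-\xi)$ on formal symbols. First I would observe that for a classical symbol the partial symbols $\sigma_k(A)$ are $k$-positively homogeneous on $T^*S^1 - S^1$, so the reflection $\xi \mapsto -\xi$ maps the component $\{\xi>0\}$ to $\{\xi<0\}$ and back; hence each term $\sigma_k(A)(x,\xi) - (-1)^k\sigma_k(A)(x,-\xi)$ is again a $k$-positively homogeneous partial symbol, and the asymptotic sum defines an element of $\mathcal{F}Cl(S^1,V)$. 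This shows $\phi$ is well-defined with values in $\mathcal{F}Cl(S^1,V)$, and linearity is immediate from the formula.

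Next I would verify that $\phi$ is idempotent. Writing $R$ for the involution on partial symbols sending $\sigma_k(x,\xi)$ to $(-1)^k\sigma_k(x,-\xi)$ (degree by degree), one checks $R^2 = \mathrm{Id}$ because applying the sign reflection twice restores $\xi$ and $(-1)^k \cdot (-1)^k = 1$. Then $\phi = \frac{1}{2}(\mathrm{Id} - R)$ on each homogeneous component, so $\phi^2 = \frac14(\mathrm{Id} - R)^2 = \frac14(\mathrm{Id} - 2R + R^2) = \frac14(2\,\mathrm{Id} - 2R) = \phi$. Thus $\phi$ is a projection, and $\mathrm{Ker}(\phi)$ is the $+1$-eigenspace of $R$, i.e. the set of $A$ with $\sigma_k(A)(x,-\xi) = (-1)^k \sigma_k(A)(x,\xi)$ for all $k$ — which is precisely the defining condition for $A$ to be of even-even class, $\mathcal{F}Cl_{ee}(S^1,V)$. (The even-order partial symbols are even in $\xi$, the odd-order ones odd in $\xi$.) Identifying this with $\Psi DO(S^1,V)$ is then the definitional/notational identification of the even-even algebra with the image of the symbol map on that subalgebra.

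For smoothness in the diffeological (or natural-differentiation) sense, I would note that $\phi$ is built, degree by degree, from (a) the linear isomorphism $\xi \mapsto -\xi$ acting on symbols, which is smooth on $S^o(S^1,\C^k)/S^{-\infty}$, (b) scalar multiplication by $(-1)^k$ and by $\frac12$, and (c) the (convergent, in the formal-symbol sense) sum over $k \in \Z$; each of these operations is smooth for the standard diffeology on formal pseudo-differential operators, and composition/sums of smooth maps are smooth, so $\phi$ is smooth. The main obstacle I anticipate is purely bookkeeping: one must make sure the reflection $\xi \mapsto -\xi$ genuinely preserves the classical-symbol asymptotics chart by chart — but since the atlas $\mathcal{A}$ has translation change-of-coordinates and the partial symbols are globally defined via $\sigma_j = \sum_{(m,n)} (\sigma_{m,n})_j$, this causes no trouble; the reflection commutes with the (translation) transition functions. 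Everything else is a direct unravelling of definitions, which is why the proposition is labelled trivial.
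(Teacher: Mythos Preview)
Your proposal is correct and is precisely the direct unravelling of definitions that the paper has in mind when it introduces this proposition with the words ``The following proposition is trivial'' and gives no proof. You even establish the projection property $\phi^2=\phi$ along the way, which is the content of the paper's \emph{next} proposition (also stated without proof), so your argument in fact covers both statements at once.
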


Following \cite{Scott}, one can define \textbf{even-odd class} pseudo-differential operators 
$$\mathcal{F}Cl_{eo}(S^1,V) = \left\{ A \in\mathcal{F} Cl(S^1,V) \, | \, \sum_{k \in \mathbb{Z}}\sigma_{k}(A)(x,\xi) + (-1)^{k}\sigma_{k}(A)(x,-\xi) = 0 \right\}.$$
\begin{rem}
	In e.g. \cite{KV1,KV2}, even-even class pseudodifferential operators are called odd class pseudodifferential operators. By the way, following the terminology of \cite{KV1,KV2} even-odd class pseudo-differential operators should be called even class. In this paper we prefer to fit with the terminology given in the textbooks\cite{PayBook,Scott} 
\end{rem}

\begin{Proposition}
	$\phi$ is a projection and $\mathcal{F}Cl_{eo}(S^1,V) = Im \phi.$
\end{Proposition}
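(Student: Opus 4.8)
The plan is to verify directly that $\phi$ is idempotent and that its image coincides with $\mathcal{F}Cl_{eo}(S^1,V)$, working entirely at the level of total formal symbols since $\phi$ is defined componentwise on the homogeneous pieces $\sigma_k(A)(x,\xi)$. The key observation is that $\phi$ acts on each homogeneous component $\sigma_k$ separately, so it suffices to understand the operator $T_k$ sending a $k$-homogeneous symbol $\tau(x,\xi)$ to $\frac12(\tau(x,\xi) - (-1)^k\tau(x,-\xi))$; then $\phi = \bigoplus_k T_k$ and all the required algebraic facts reduce to facts about each $T_k$.

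First I would show $T_k^2 = T_k$. Writing $\tau^\pm(x,\xi) = \frac12(\tau(x,\xi) \pm \tau(x,-\xi))$ for the decomposition of $\tau$ into its even and odd parts in $\xi$, one checks that $T_k\tau$ equals $\tau^-$ when $k$ is even and $\tau^+$ when $k$ is odd; in either case $T_k\tau$ has a definite parity in $\xi$, and applying $T_k$ again (which extracts exactly that same parity) returns $T_k\tau$ unchanged. Summing over $k\in\mathbb{Z}$ gives $\phi^2 = \phi$. (One should note that this manipulation is legitimate on formal symbols: the sum $\sum_k \sigma_k(A)$ is an asymptotic expansion, each term is treated individually, and smoothness of $\phi$ as a map of Fréchet-type spaces of formal symbols was already asserted in the preceding proposition.)

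Next I would identify $\mathrm{Im}\,\phi$ with $\mathcal{F}Cl_{eo}(S^1,V)$ by a double inclusion. For $\mathrm{Im}\,\phi \subseteq \mathcal{F}Cl_{eo}$: given $B = \phi(A)$, its $k$-th homogeneous component is $T_k\sigma_k(A)$, which has parity $(-1)^{k+1}$ in $\xi$ (odd-in-$\xi$ for $k$ even, even-in-$\xi$ for $k$ odd); substituting into $\sum_k \sigma_k(B)(x,\xi) + (-1)^k\sigma_k(B)(x,-\xi)$ makes each term vanish, so $B\in\mathcal{F}Cl_{eo}$. Conversely, if $B\in\mathcal{F}Cl_{eo}$, then the defining relation forces, for each $k$, the identity $\sigma_k(B)(x,-\xi) = -(-1)^{-k}\sigma_k(B)(x,\xi) = -(-1)^k\sigma_k(B)(x,\xi)$ after isolating the $k$-homogeneous part (homogeneous components of different degrees are linearly independent, so the total relation holds degree by degree). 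This parity condition is exactly what makes $T_k$ act as the identity on $\sigma_k(B)$, whence $\phi(B) = B$ and $B\in\mathrm{Im}\,\phi$. Combining the two inclusions and idempotency finishes the proof; one may also remark $\mathcal{F}Cl_{ee} = \ker\phi = \mathrm{Im}(\mathrm{Id}-\phi)$ to make the complementary splitting explicit.

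The only genuinely delicate point is the degree-by-degree extraction of the even-odd class defining relation: one must be careful that the condition $\sum_k[\sigma_k(B)(x,\xi) + (-1)^k\sigma_k(B)(x,-\xi)] = 0$ as an asymptotic expansion really does decouple into the separate conditions for each $k$. This follows because $\sigma_k(B)(x,-\xi)$ is again $k$-positively-homogeneous (homogeneity degree is unaffected by $\xi\mapsto-\xi$ up to the sign already accounted for), so the bracketed term is $k$-homogeneous, and an asymptotic expansion vanishes iff each homogeneous term vanishes. Everything else is bookkeeping with the even/odd decomposition of functions in the $\xi$ variable, and no estimates beyond those built into the definition of formal symbols are needed.
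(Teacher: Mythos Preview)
Your proof is correct and is essentially the only natural approach: decompose $\phi$ as the direct sum of degree-wise projectors $T_k$ onto the appropriate parity component in $\xi$, check $T_k^2=T_k$, and then identify $\operatorname{Im}\phi=\ker(\mathrm{Id}-\phi)$ with the defining condition of $\mathcal{F}Cl_{eo}$. The paper itself provides no proof of this proposition, treating it (like the preceding one) as trivial, so your argument simply fills in what the author left to the reader; your attention to the degree-by-degree decoupling via positive homogeneity under $\xi\mapsto-\xi$ is the only point that genuinely needs a word of justification, and you handle it correctly.
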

By the way, we also have
$$ \mathcal{F}Cl(S^1,V) = \mathcal{F}Cl_{ee}(S^1,V) \oplus \mathcal{F}Cl_{eo}(S^1,V).$$

We have the following composition rules for the class of a formal operator $A\circ B:$
\vskip 12pt
\begin{tabular}{|c|c|c|}
	\hline
	&&\\
	& $A$ even-even class & $A$ even class \\
	&&\\
	\hline
	&&\\
	$B$ even-even class & $A \circ B$ even-even class & $A \circ B$ even-odd class \\
	&&\\
	\hline
	&&\\
	$B$ even-odd class & $A \circ B$ even-odd class & $A \circ B$ even-even class \\
	&&\\
	\hline
\end{tabular}

\begin{ex}
	$\epsilon(D)$ and $|D|$ are even-odd class, while we already mentionned that differential operators are even-even class.
\end{ex}

Applying the local
formula for Wodzicki residue, one can prove \cite{CDMP}:

\begin{Proposition} \label{p8}
	
	If $A$ and $Q$ lie in the odd
	class, then $f(s)=tr(AQ^{-s})$ has no pole at $s=0$. Moreover, if
	A and B are odd class pseudo-differential operators, $\tr^Q \left(
	[A,B] \right) =0$ and $\tr^QA$ does not depend on Q.
	
\end{Proposition}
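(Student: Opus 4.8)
\textbf{Proof plan for Proposition \ref{p8}.}

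The plan is to exploit the local formula for the Wodzicki residue from Proposition \ref{p5}(ii), namely $\res_W A = \frac{1}{2\pi}\sum_{\xi = \pm 1}\int_{S^1} \tr\,\sigma_{-1}(x,\xi)\,dx$, together with the pole-and-residue description of $\zeta(A,Q,s)$ recalled just before Definition \ref{d6}: when $A$ is classical, $\zeta(A,Q,\cdot)$ has at worst a simple pole at $s=0$ with residue $\frac{1}{q}\res_W A$. So the first claim, that $f(s) = \tr(AQ^{-s})$ is regular at $s=0$, reduces entirely to showing $\res_W A = 0$ when $A$ and $Q$ are both odd (even-even) class. First I would check that $AQ^{-s}$ is handled correctly: $Q^{-s}$ is classical, and I claim $\res_W(AQ^{-s})$ is what controls the pole; but in fact the cleaner route is to observe that $A$ itself being even-even class already forces $\res_W A = 0$ by a parity argument on $\sigma_{-1}$. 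Indeed, for an even-even class operator, $\sigma_{-1}(x,\xi) - (-1)^{-1}\sigma_{-1}(x,-\xi) = \sigma_{-1}(x,\xi) + \sigma_{-1}(x,-\xi) = 0$, so $\sigma_{-1}(x,-\xi) = -\sigma_{-1}(x,\xi)$; summing over $\xi = \pm 1$ in the local formula then cancels the two contributions, giving $\res_W A = 0$. Combined with the residue formula, this shows $\zeta(A,Q,s)$ has no pole at $s=0$, which is the first assertion. (One only needs $Q$ odd class to guarantee that $\log Q$ and the complex powers behave well — more precisely, that $A Q^{-s}$ stays in the class where the meromorphic continuation and residue formula apply; I would cite the discussion around Definition \ref{d6} and \cite{CDMP} for this.)

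For the second assertion, $\tr^Q([A,B]) = 0$ for $A,B$ odd class, I would invoke formula \eqref{crochet} from Proposition \ref{p6}: $\tr^Q[A,B] = -\frac{1}{q}\res_W(A[B,\log Q])$. The key point is that $\log Q$, for $Q$ an odd-class weight, is log-polyhomogeneous with a principal symbol $\sim q\log|\xi|$ plus a classical even-even part; more to the point, $[B,\log Q]$ is classical and one must track its parity. Using the composition table for the even-even/even-odd splitting and the fact that $\log Q$ (up to the universal $\log|\xi|$ term, whose bracket with $B$ contributes a classical operator) behaves like an even-even symbol, one gets that $A[B,\log Q]$ is even-even class, hence has vanishing $\sigma_{-1}$-parity exactly as above, so its Wodzicki residue vanishes. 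Therefore $\tr^Q[A,B] = 0$.

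For the final assertion, independence of $\tr^Q A$ from $Q$ among odd-class weights, I would use the derivative formula \eqref{deriv}: given a smooth path $Q_t$ of odd-class weights of constant order $q$ joining $Q_0$ to $Q_1$, with $A$ fixed, $\frac{d}{dt}\tr^{Q_t}A = -\frac{1}{q}\res_W(A\,\frac{d}{dt}\log Q_t)$. Since $\frac{d}{dt}\log Q_t$ is classical of order $0$ and — by the odd-class assumption on the path — even-even class (the derivative of the $\log|\xi|$ term drops out since the order is constant), the operator $A\frac{d}{dt}\log Q_t$ is even-even class and its residue vanishes by the parity argument. Hence $\tr^{Q_t}A$ is constant in $t$, so $\tr^{Q_0}A = \tr^{Q_1}A$; connectedness of the space of odd-class weights of a given order (convex combinations of positive self-adjoint operators stay positive self-adjoint, and the odd-class condition is linear) finishes it. The main obstacle I anticipate is the careful bookkeeping of parity for $\log Q$ and its variations: strictly speaking $\log Q$ is only log-polyhomogeneous and not classical, so one has to isolate the scalar $q\log|\xi|\,\mathrm{Id}$ piece — whose contribution to the relevant residues is either zero or cancels — and argue that the remaining genuinely-classical part is even-even; making this step rigorous is where the real work lies, and I would lean on the argument already used for Proposition \ref{p6} in \cite{CDMP}.
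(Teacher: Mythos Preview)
Your proposal is correct and follows exactly the route the paper indicates: the paper does not give a proof here but simply states the result as a consequence of the local formula for the Wodzicki residue, citing \cite{CDMP}. Your parity argument on $\sigma_{-1}$ via Proposition~\ref{p5}(ii), followed by the use of \eqref{crochet} and \eqref{deriv} from Proposition~\ref{p6}, is precisely the mechanism behind that citation, and your caveat about isolating the $q\log|\xi|$ piece of $\log Q$ is the right place to flag where the careful work in \cite{CDMP} is actually needed.
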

\subsection{Formal and non-formal $Diff(S^1)-$pseudodifferential operators}
It follows from \cite{Ee,Om} that $Diff_+(S^1)$ is open in the 
Fr\'echet manifold $C^\infty(S^1,S^1)$. This fact makes it a Fr\'echet
manifold and, following \cite{Om}, a regular Fr\'echet Lie group.
In addition to groups of pseudo-differential operators, we also need {a restricted class of} groups of 
Fourier integral operators which we will call $Diff(S^1)-$pseudodifferential operators following \cite{Ma2016,MR2018}.
These groups appear as central extensions of $Diff(S^1)$ or $Diff_+(S^1)$ by groups of pseudo-differential operators. 
We do not state the basic facts on Fourier integral operators here (they can be found in the classical paper \cite{Horm}). 
The pseudo-differential operators considered here can be classical, odd class, or anything else. Applying 
the formulas of ``changes of coordinates'' (which can be understood as adjoint 
actions of diffeomorphisms) of e.g. \cite{Gil}, we obtain that even-even and even-odd class pseudo-differential operators are stable 
under the adjoint action of $Diff(S^1).$  Thus, we can define the following groups \cite{Ma2016}:

\begin{Definition}
	\begin{enumerate}
		\item The group $FCl_{Diff(S^1)}^{*}(S^1,V)$ is the infinite dimensional group defined by 
		$$FCl_{Diff(S^1)}^{*}(S^1,V) = \left\{ A = B \circ g \, | \, B \in Cl^*(S^1,V) \hbox{ and } g \in Diff(S^1) \right\}.$$
		\item The group $FCl_{Diff(S^1)}^{0,*}(S^1,V)$ is the infinite dimensional group defined by 
		$$FCl_{Diff(S^1)}^{0,*}(S^1,V) = \left\{ A = B \circ g \, | \, B \in Cl^{0,*}(S^1,V) \hbox{ and } g \in Diff(S^1) \right\}.$$
		\item The group $FCl_{Diff(S^1),ee}^{*}(S^1,V)$ is the infinite dimensional group defined by 
		$$FCl_{Diff(S^1),ee}^{*}(S^1,V) = \left\{ A = B \circ g \, | \, B \in Cl^*_{ee}(S^1,V) \hbox{ and } g \in Diff(S^1) \right\}.$$
		\item The group $FCl_{Diff(S^1),ee}^{0,*}(S^1,V)$ is the infinite dimensional group defined by 
		$$FCl_{Diff(S^1),ee}^{0,*}(S^1,V) = \left\{ A = B \circ g \, | \, B \in Cl^{0,*}_{ee}(S^1,V) \hbox{ and } g \in Diff(S^1) \right\}.$$
	\end{enumerate}
\end{Definition}

 
\begin{rem} \label{Baaj} 
	This construction of phase functions of $Diff(S^1)-$pseudo-differential operators
	differs from the one described by Omori \cite{Om} 
	and Adams, Ratiu and Schmid \cite{ARS2} for some groups of Fourier integral operators; the exact relation among these
	constructions still needs to be investigated.
\end{rem}

\begin{rem}
	 The decomposition $A = B \circ g$ is unique \cite{Ma2016}, and the diffeomorphism appears as the phase of the Fourier integral operator. 	
\end{rem}

\begin{rem} the group $Diff(S^1)$ decomposes into two connected components 
	$Diff(S^1) = Diff_+(S^1) \cup Diff_-(S^1)\; ,$
	where the connected component of the identity, $Diff_+(S^1)$, is the group of orientation preserving diffeomorphisms 
	of $S^1$. By the way, we can replace $Diff(S^1)$ by $Diff_+(S^1)$ in the previous definition.
\end{rem}

\begin{rem} We have, on these Lie groups, somme difficulties to exhibit an atlas especially when considering unbounded operators. One then can consider "natural" notions of smoothness, inherited from the embedding into $Cl(S^1,V)$ for the pseudo-differential part, and from the well-known structure of ILB  Lie group \cite{Om} from the diffeomorphism (phase) component. In order to be more rigorous, one can then consider Fr\"olicher Lie groups along the lines of \cite{Ma2016,MR2018} in this context, or in \cite{Ma2018-2,MR2016} when dealing with other examples where this setting is useful. A not-so-complete description of technical properties of Fr\"olicher Lie groups can be found in works by other authors \cite{BN2005,Lau2011,Neeb2007} but this area of knowledge, however, still needs to be further developed. This lack of theoretical knowledge is not a problem for understanding and dealing with groups of $Diff(S^1)-$pseudo-differential operators.
\end{rem}
From this last remark, we deduce that the Lie algebra of $FCl_{Diff(S^1)}^{*}(S^1,V)$ is $Cl(S^1,V) \oplus Vect(S^1),$ and we remark that $$(a,X) \mapsto a + X$$ is a Lie algebra morphism with values in $Cl(S^1,V).$
The same remark holds for subgroups of $FCl_{Diff(S^1)}^{*}(S^1,V).$
Let us now define a relation of equivalence "up to smoothing operators". 

\begin{Definition}
	Let $(A,A') \in (FCl_{Diff(S^1)}^{*}(S^1,V))^2,$ with $A = B \circ g$ and $A' = B' \circ g'$ as before. Then $$A \equiv A' \quad \Leftrightarrow \quad \left\{\begin{array}{rcl} g = g' && \\
	B - B' & \in & Cl^{-\infty}(S^1,V) \end{array} \right.$$
	The set of equivalence classes with respect to $\equiv$ is noted as $\mathcal{F}FCl_{Diff(S^1)}^{*}(S^1,V)$ and is called the set of formal $Diff(S^1)-$pseudodifferential operators.
\end{Definition}

The same spaces of formal operators can be constructed using orientation-preserving diffeomorphisms of $S^1,$ even-even class pseudodifferential operators and so on. We do not feel the need to give here redundant constructions, and obvious notations.

\begin{Theorem} 
	Let $$G = \left\{ A \in Cl^{0;*}(S^1,V) \, | \, A = Id + B, \, B \in Cl^{-\infty}(S^1,V)\right\}.$$
	Then \begin{itemize}
		\item $G \vartriangleleft FCl^*_{Diff(S^1)}(S^1,V),$
		\item given $(A,A') \in FCl^*_{Diff(S^1)}(S^1,V)^2,$
		$$ A \equiv A' \Leftrightarrow AA'^{-1} \in G$$
	\end{itemize}
	 which implies that $$\mathcal{F}FCl_{Diff(S^1)}^{*}(S^1,V)= FCl_{Diff(S^1)}^{*}(S^1,V)/G.$$ By the way, $\mathcal{F}FCl_{Diff(S^1)}^{*}(S^1,V)$ is a group. Moreover, \begin{equation} \label{str} \mathcal{F}FCl_{Diff(S^1)}^{*}(S^1,V) = \mathcal{F}Cl^{*}(S^1,V) \rtimes Diff(S^1).\end{equation}
\end{Theorem}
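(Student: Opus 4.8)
The plan is to verify the three assertions in turn, each following fairly directly from the uniqueness of the decomposition $A = B \circ g$ and from Theorem~\ref{t1}(iii), which asserts that $Cl^{-\infty}(S^1,V)$ is a ``two-sided'' ideal behaving well under brackets. First I would establish that $G$ is a normal subgroup. That $G$ is a subgroup of $Cl^{0,*}(S^1,V)$ is standard: $Cl^{-\infty}(S^1,V)$ is a two-sided ideal of $Cl(S^1,V)$, so for $A = Id + B$ and $A' = Id + B'$ in $G$ one has $AA' = Id + (B + B' + BB') \in G$, and the Neumann-type series $A^{-1} = Id - B + B^2 - \cdots$ converges in $Cl^{0}$ with remainder in $Cl^{-\infty}$ (here one uses that $A$ is already assumed invertible in $Cl^{0,*}$, and that the inverse of $Id + B$ differs from $Id$ by a smoothing operator — a classical fact about smoothing perturbations of the identity). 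For normality, take $A = B\circ g \in FCl^*_{Diff(S^1)}(S^1,V)$ and $H = Id + S \in G$; then $A H A^{-1} = B\, g S g^{-1} B^{-1}$ after cancelling the diffeomorphism parts, and $g S g^{-1}$ is again smoothing because the adjoint action of a diffeomorphism preserves $Cl^{-\infty}$ (the pullback of a smoothing kernel is smoothing), hence $B (g S g^{-1}) B^{-1}$ is smoothing and $AHA^{-1} \in G$.

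Next I would prove the equivalence $A \equiv A' \Leftrightarrow A A'^{-1} \in G$. Write $A = B \circ g$ and $A' = B' \circ g'$. Compute $A A'^{-1} = B \circ g \circ g'^{-1} \circ B'^{-1}$. If $A \equiv A'$ then $g = g'$ so the diffeomorphism parts cancel and $A A'^{-1} = B B'^{-1}$; since $B - B' \in Cl^{-\infty}$, we get $B B'^{-1} = Id + (B - B')B'^{-1} \in Id + Cl^{-\infty} = G$. Conversely, if $A A'^{-1} \in G \subset Cl^{0,*}(S^1,V)$, then in particular $A A'^{-1}$ has trivial phase (it is a genuine pseudo-differential operator, not a proper Fourier integral operator); by the uniqueness of the decomposition of elements of $FCl^*_{Diff(S^1)}(S^1,V)$ applied to the product $A A'^{-1} = B (g g'^{-1}) B'^{-1}$, this forces $g g'^{-1} = \mathrm{id}$, i.e. $g = g'$, and then $A A'^{-1} = B B'^{-1} = Id + S$ with $S \in Cl^{-\infty}$, whence $B - B' = S B' \in Cl^{-\infty}$, giving $A \equiv A'$. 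The quotient description $\mathcal{F}FCl^*_{Diff(S^1)}(S^1,V) = FCl^*_{Diff(S^1)}(S^1,V)/G$ is then immediate: $\equiv$ is exactly the coset equivalence for $G$, so the set of $\equiv$-classes is the quotient, and since $G$ is normal this quotient is a group.

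Finally, for the identification $\mathcal{F}FCl^*_{Diff(S^1)}(S^1,V) = \mathcal{F}Cl^*(S^1,V) \rtimes Diff(S^1)$, I would use the map $(B\circ g) \mapsto ([B], g)$ where $[B]$ denotes the class of $B$ in $\mathcal{F}Cl^*(S^1,V) = Cl^*(S^1,V)/Cl^{-\infty}(S^1,V)$. The relation $\equiv$ identifies $B \circ g$ and $B' \circ g$ precisely when $[B] = [B']$, so this map is a well-defined bijection on the level of sets. To see it is a group isomorphism onto the semidirect product, one computes $(B \circ g)(B' \circ g') = B (g B' g^{-1}) \circ (g g')$, so the induced multiplication on classes is $([B],g)\cdot([B'],g') = ([B] \cdot [g B' g^{-1}], g g')$; since the adjoint action of $Diff(S^1)$ on $Cl(S^1,V)$ preserves $Cl^{-\infty}(S^1,V)$, it descends to an action on $\mathcal{F}Cl^*(S^1,V)$, and the formula above is exactly the semidirect product multiplication for that action. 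The main obstacle is making the uniqueness-of-decomposition argument watertight in the converse direction — i.e. arguing that an element of $FCl^*_{Diff(S^1)}(S^1,V)$ which happens to be a classical pseudo-differential operator must have trivial phase — but this is precisely the content of the uniqueness remark cited from \cite{Ma2016}, so I would invoke it rather than reprove it. The remaining bookkeeping (convergence of Neumann series with smoothing remainders, invariance of $Cl^{-\infty}$ under conjugation) is routine given the properties of pseudo-differential calculus recalled in the preliminaries.
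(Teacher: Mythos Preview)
Your proof is correct and follows the same essential strategy as the paper, with two organizational differences worth noting. First, for the semidirect product identification the paper argues via a commutative diagram of short exact sequences (diagram~\eqref{diagram1}), deducing exactness of the right column from that of the rows and the central column; you instead write down the map $(B\circ g)\mapsto([B],g)$ explicitly and check the multiplication law. Your route has the minor advantage of making the \emph{splitting} (hence the genuine semidirect product, not merely an extension) visible, which the paper's diagram leaves implicit. Second, on the converse direction of $A\equiv A'\Leftrightarrow AA'^{-1}\in G$ you are actually more careful than the paper: the paper's $\Leftrightarrow$ chain silently assumes $g=g'$ on the way back, whereas you correctly isolate that this step needs the uniqueness of the decomposition $A=B\circ g$ from \cite{Ma2016}. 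One small quibble: the Neumann series for $(Id+B)^{-1}$ need not converge when $\|B\|$ is not small, but your parenthetical already points to the right fix --- the identity $A^{-1}-Id=-A^{-1}B$ (with $A^{-1}$ bounded $L^2\to L^2$ and $B$ smoothing) shows directly that $A^{-1}\in G$ without any series.
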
 

\begin{proof} Let $(A,A') \in FCl^*_{Diff(S^1)}(S^1,V),$ with $A = B \circ g, $ $A' = B' \circ g'$ in the decomposition $ FCl^*_{Diff(S^1)}(S^1,V) =  Cl^*_{}(S^1,V) \rtimes Diff(S^1).$ 
	\begin{eqnarray*}
		&&A \equiv A'\\ 
		& \Leftrightarrow &  \left\{\begin{array}{lcl} g = g' &&\\
			B - B'=-R & \in & Cl^{-\infty}(S^1,V) \end{array} \right. \\
		& \Leftrightarrow & A A'^{-1} = B B'^{-1} = Id + RB'^{-1} \in G.
	\end{eqnarray*}

	Let $A \in FCl^*_{Diff(S^1)}(S^1,V)$ and let $Id+S \in G$ with $S \in Cl^{-\infty}(S^1,\C).$ Then $$A^{-1}(Id + S)A = Id + A^{-1}SA.$$
	We have $$A = B \circ g $$ with $B \in Cl^*(S^1,V)$ and $g \in Diff(S^1).$ 
	As an operator acting on $L^2(S^1,V)$-functions, composition on the right by a diffeomorphism is a bounded operator. As an operator from $C^\infty(S^1,V)$ to $C^\infty(S^1,V),$ composition on the right by a diffeomorphism is also a bounded operator. A pseudo-differential operator of order $o$ is a bounded operator from $L^2(S^1,V)$ to $H^{-o}(S^1,V),$ and also from $C^\infty(S^1,V)$ to $C^\infty(S^1,V).$ Finally a smoothing operator is bounded from any Sobolev space $H^{-o}$ to $C^\infty.$ By the way, $$A^{-1} S A = g^{-1} B^{-1} S B g \in Cl^{-\infty}(S^1,V).$$ Hence $G \vartriangleleft FCl^*_{Diff(S^1)}(S^1,V).$ Let us now examine the following diagram: 
	
	\begin{equation} \label{diagram1} \begin{array}{ccccccccc}
		&&  && 1 && 1 &&\\
		&&  && \downarrow && \downarrow &&\\
		1 & \rightarrow & G & \rightarrow & Cl^*(S^1,V) & \rightarrow & \F Cl^*(S^1,V) & \rightarrow & 1 \\
		&& \| && \downarrow && \downarrow &&\\ 
		1 & \rightarrow & G & \rightarrow & FCl^*_{Diff(S^1)}(S^1,V) & \rightarrow & \F FCl^*_{Diff(S^1)}(S^1,V) & \rightarrow & 1 \\
		&&  && \downarrow && \downarrow &&\\
			&&  && Diff(S^1) & = & Diff(S^1) &&\\
				&&  && \downarrow && \downarrow &&\\
			&&  && 1 && 1 &&\\
		\end{array} \end{equation}
		The three squares commute, the two horizontal lines are short exact sequences as well as the central culumn. By the way, we also have that 
		$$ 1 \rightarrow  \F Cl^*(S^1,V) \rightarrow \F FCl^*_{Diff(S^1)}(S^1,V) \rightarrow Diff(S^1) \rightarrow 1$$ is a short exact sequence. 
\end{proof}

Via identification \ref{str}, $\mathcal{F}FCl_{Diff(S^1)}^{*}(S^1,V)$ as a (set theoric) product can be equiped by the product topology of $\mathcal{F}Cl^{*}(S^1,V) \times Diff(S^1).$ This makes of it a smooth Lie group modelles on a locally convex topological vector space, and we can state:

\begin{Proposition}
	There is a natural structure of infinite dimensional Lie group on $\mathcal{F}FCl_{Diff(S^1)}^{*}(S^1,V)$, and
its Lie algebra (defined by germs of smooth paths) reads as $$\mathcal{F}Cl(S^1,V) \rtimes Vect(S^1).$$ 
\end{Proposition}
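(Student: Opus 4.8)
The plan is to build the Lie group structure out of the two factors appearing in the decomposition $\eqref{str}$, namely $\mathcal{F}Cl^*(S^1,V)$ and $Diff(S^1)$, and then identify the Lie algebra as the corresponding semidirect product of their Lie algebras. First I would recall that $\mathcal{F}Cl^*(S^1,V)$, the group of invertible formal classical pseudo-differential operators, is a smooth Lie group modelled on the locally convex space $\mathcal{F}Cl(S^1,V)$: this is the standard fact that the invertibles of a (suitable) topological algebra form an open subset, or at least that one has a well-behaved exponential-type chart built from the formal symbol calculus with multiplication $\eqref{comp2}$; in any case for the Frölicher / ``natural smoothness'' viewpoint advertised in the preliminaries it suffices that $\mathcal{F}Cl^*(S^1,V)$ carries the subset diffeology from $\mathcal{F}Cl(S^1,V)$ and that multiplication and inversion are smooth for it, which follows since composition of formal symbols is a polynomial-type (actually a convergent, order-by-order finite) operation in the symbol components. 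On the other side, $Diff(S^1)$ is a regular Fréchet Lie group by Omori's theorem, as already quoted in the excerpt from \cite{Ee,Om}.

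Next I would verify that the action of $Diff(S^1)$ on $\mathcal{F}Cl^*(S^1,V)$ by conjugation, $g \cdot B = g B g^{-1}$ (i.e. the change-of-coordinates action), is smooth as a map $Diff(S^1) \times \mathcal{F}Cl^*(S^1,V) \to \mathcal{F}Cl^*(S^1,V)$. This uses the explicit change-of-coordinates formulas for symbols (from \cite{Gil}), which express the transformed partial symbols as smooth — indeed polynomial in the jets of $g$ and $g^{-1}$ — functions of the original ones; smoothness in $g$ then follows from smoothness of $g \mapsto g^{-1}$ on $Diff(S^1)$ and of the jet maps, and smoothness in $B$ is immediate since the formulas are linear in the symbol of $B$. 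Granting this, the semidirect product $\mathcal{F}Cl^*(S^1,V) \rtimes Diff(S^1)$ is a smooth (Frölicher) Lie group modelled on the locally convex space $\mathcal{F}Cl(S^1,V) \times Vect(S^1)$, with the product topology/diffeology, and by the identification $\eqref{str}$ this transported structure lives on $\mathcal{F}FCl_{Diff(S^1)}^*(S^1,V)$. One should check that this structure is independent of the presentation $A = B\circ g$, but this is exactly the uniqueness of the decomposition already recorded in the remark after the Definition, so there is nothing to choose.

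Finally, for the Lie algebra statement: the Lie algebra of a semidirect product $H \rtimes K$ is the semidirect product $\mathfrak h \rtimes \mathfrak k$ of the Lie algebras, with bracket twisted by the derivative of the action. Here $\mathfrak h = \mathrm{Lie}(\mathcal{F}Cl^*(S^1,V)) = \mathcal{F}Cl(S^1,V)$ (the tangent space at the identity to an open-type subgroup of the algebra is the whole algebra, with the commutator bracket of formal symbols) and $\mathfrak k = \mathrm{Lie}(Diff(S^1)) = Vect(S^1)$, and the action of $Vect(S^1)$ on $\mathcal{F}Cl(S^1,V)$ is by the Lie derivative along vector fields, i.e. the linearization of the conjugation action. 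Computing germs of smooth paths through the identity — a path $(B_t, g_t)$ with $B_0 = Id$, $g_0 = \mathrm{id}$ — and reading off the first-order term gives the tangent space $\mathcal{F}Cl(S^1,V) \times Vect(S^1)$; the bracket computation is the usual semidirect-product formula. So the Lie algebra is $\mathcal{F}Cl(S^1,V) \rtimes Vect(S^1)$, as claimed.

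I expect the genuine obstacle to be the smoothness of the conjugation action of $Diff(S^1)$ on formal symbols — more precisely, making ``smooth'' precise in the Frölicher/diffeological sense when one of the factors ($\mathcal{F}Cl^*(S^1,V)$) does not have an obvious manifold atlas and the other ($Diff(S^1)$) is only an ILB/Fréchet group — and making sure the change-of-coordinates formulas genuinely give a smooth map into $\mathcal{F}Cl(S^1,V)$ with its chosen topology rather than just a formally defined one; the compatibility with the equivalence relation $\equiv$ and the passage from $FCl^*_{Diff(S^1)}(S^1,V)$ to the quotient by $G$ is the comparatively easy part, handled by the preceding theorem. The rest is the formal algebra of semidirect products, which is routine.
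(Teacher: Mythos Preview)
Your proposal is correct and follows essentially the same approach as the paper: use the identification \eqref{str} to transport the product structure from $\mathcal{F}Cl^{*}(S^1,V) \times Diff(S^1)$ onto $\mathcal{F}FCl_{Diff(S^1)}^{*}(S^1,V)$, and read off the Lie algebra as the semidirect product. In fact the paper's own argument is far terser than yours --- it consists of a single sentence before the Proposition, noting that the product topology makes the object a smooth Lie group modelled on a locally convex space --- so your elaboration of the smoothness of the conjugation action and the semidirect-product Lie algebra computation supplies detail the paper leaves implicit.
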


\begin{rem}
	\label{nonreg}
	It is proven in \cite{Ma2016} that, in an algebra of formal pseudo-differential operators that can be identified with $\F Cl_{ee}(S^1,\C)$ in our context, the constant vector field $t \mapsto \frac{d}{dx}$ does not integrate to a smooth path on the group, in other words $$exp\left(\frac{d}{dx}\right)\notin \F Cl_{ee}^*(S^1,\C).$$ This shows that $\F Cl_{ee}^*(S^1,\C),$ and hence $Cl^*(S^1,V)$ and also $FCl^*_{Diff(S^1)}(S^1,V)$ are not regular in the sense of Omori \cite{Om}, while the same constant vector, understood as an element of $Vect(S^1),$ the Lie algebra of $Diff(S^1),$ integrates to a rotation on the circle. This shows that one has to be careful on which component the differential monomials of degree $1$ are considered while working with $FCl^*_{Diff(S^1)}(S^1,V).$ Its Lie algebra cannot be embedded but only represented in $Cl_{ee}(S^1,V).$
\end{rem}

\section{Relation with the restricted linear group} \label{s:res}

\subsection{On cocycles on $Cl(S^1,\mathbb{C}^k)$} \label{sect2}

Let us first precise which polarization we choose on $L^2(S^1,\mathbb{C}^k)$. 
We can choose independently two polarizarions : 

- one setting $H_+^{(1)} = E_+$ and $H_-^{(1)} = E_0 \oplus E_-$,
 
- or another one setting $H_+^{(2)} = E_+ \oplus E_0$ and $H_-^{(2)} =  E_-$. 

Since $E_0$ is of dimension $k$, the orthogonal projection on $E_0$ is a 
smoothing operator. Hence, 
$$\sigma(p_{H_+^{(1)}}) =  \sigma(p_{H_+^{(2)}}) = 1_{\xi > 0}$$
and 
$$\sigma(p_{H_-^{(1)}}) =  \sigma(p_{H_-^{(2)}}) = 1_{\xi < 0}.$$
 
We introduce the notation, for $A \in PDO(S^1, \mathbb{C}^k)$, 
 $$A_{++} = p_{H_+} A p_{H_+},$$
where $H_+$ denotes $H_+^{(1)}$ or $H_+^{(2)}$, and we set 
$\epsilon(D) = p_{H_+} - p_{H_-}$.
We notice that $\sigma(A_{++}) = \sigma_+(A)$, and recall the following result \cite{Ma2006-2}:  

\begin{Theorem} \label{th1}
For any $A \in Cl(S^1,\mathbb{C}^k)$, $[A,\epsilon(D)] \in Cl^{-\infty}(S^1,\mathbb{C}^k).$ Consequently, 
$$ c_s^D : A,B \in Cl(S^1,E) \mapsto {1 \over 2}\tr \left( \epsilon(D)[\epsilon(D),A][\epsilon(D),B]  \right)  = \tr\left(\left[A,\epsilon(D)\right]B \right)$$
is a well-defined  2-cocycle on $PDO(S^1, \mathbb{C}^k).$ Moreover, $c_s^D$ is non trivial on any Lie algebra $\mathcal A$ such that $C^\infty(S^1,\mathbb{C}^k)\subset \mathcal{A} \subset Cl(S^1,\mathbb{C}^k).$
\end{Theorem}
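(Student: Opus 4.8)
The plan is to prove the theorem in three stages: first the claim that $[A,\epsilon(D)]$ is smoothing for classical $A$, then the well-definedness and cocycle identity for $c_s^D$, and finally non-triviality.

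\textbf{Step 1: $[A,\epsilon(D)] \in Cl^{-\infty}(S^1,\C^k)$.} The key observation is that $\epsilon(D)$ has formal symbol $\sigma(\epsilon) = \xi/|\xi|$ by Lemma \ref{l1}(iii), which is $0$-positively homogeneous and independent of $x$. Hence, by Corollary \ref{0-hom}, for any $A \in \F Cl(S^1,\C^k)$ one has $\sigma(\epsilon(D)\circ A) = \sigma(\epsilon)\cdot\sigma(A)$ and $\sigma(A\circ\epsilon(D)) = \sigma(A)\cdot\sigma(\epsilon)$, both pointwise products (here I use that the full composition formula \eqref{comp2} collapses because all $\xi$-derivatives of $\sigma(\epsilon)$ vanish, by Lemma \ref{l2}). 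Since $M_k(\C)$-valued pointwise multiplication by the scalar $\xi/|\xi|$ commutes, these two symbols coincide, so $\sigma([\epsilon(D),A]) = 0$ in $\F Cl(S^1,\C^k)$, i.e.\ $[\epsilon(D),A] \in Cl^{-\infty}(S^1,\C^k)$. This is essentially the content already recorded around Proposition \ref{pag0}; I would just cite Corollary \ref{0-hom} and Lemma \ref{l2}.

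\textbf{Step 2: $c_s^D$ is a well-defined 2-cocycle.} Well-definedness: since $[\epsilon(D),A]$ and $[\epsilon(D),B]$ are smoothing, the products $\epsilon(D)[\epsilon(D),A][\epsilon(D),B]$ and $[A,\epsilon(D)]B$ are smoothing (an ideal argument: $Cl^{-\infty}$ is a two-sided ideal, and it is contained in the trace-class operators on $L^2(S^1,\C^k)$), hence genuinely trace-class, so $\tr$ makes sense. The two expressions agree because $\epsilon(D)^2 = Id$ up to the smoothing projection onto $E_0$ — more carefully, $\epsilon(D)[\epsilon(D),A][\epsilon(D),B] = (\epsilon(D)^2 A - \epsilon(D)A\epsilon(D))[\epsilon(D),B]$, and since $[\epsilon(D),A]$ is smoothing one may freely move $\epsilon(D)$ past it inside the trace; combined with $\tr$-cyclicity on trace-class operators this yields $\tr([A,\epsilon(D)]B)$ (I'd do this bookkeeping carefully but it is routine). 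The cocycle identity $c_s^D([A,B],C) + c_s^D([B,C],A) + c_s^D([C,A],B) = 0$ then follows from the standard computation: writing $\omega(A) = [A,\epsilon(D)]$, one has $\omega([A,B]) = [\omega(A),B] + [A,\omega(B)]$ (Leibniz, since $\epsilon(D)$ is fixed), and $\tr$ is a genuine trace on the trace-class operators involved, so the three-term sum collapses exactly as for the universal trace cocycle. Antisymmetry is clear from the $\tr([A,\epsilon(D)]B)$ form plus cyclicity.

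\textbf{Step 3: Non-triviality.} Here I would exhibit a pair on which $c_s^D$ does not vanish and show no linear functional $\mu$ on $\mathcal{A}$ can have $c_s^D(A,B) = \mu([A,B])$. The natural test is to work inside $C^\infty(S^1,\C^k) \subset \mathcal{A}$, where $c_s^D$ restricts to (a multiple of) the Kac--Peterson / central cocycle of the loop algebra: for multiplication operators $M_f, M_g$ by smooth functions, $c_s^D(M_f,M_g)$ computes to a constant times $\int_{S^1} \tr(f\, g')\,dx$ (this is the classical computation recalled in the introduction, relating $c_s^D$ to the Khesin--Kravchenko--Radul and Gelfand--Fuchs cocycles). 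One then picks $f,g$ with $\int_{S^1}\tr(fg')\,dx \neq 0$; since $[M_f,M_g] = 0$ as multiplication operators commute, any coboundary $\mu([M_f,M_g])$ vanishes, whereas $c_s^D(M_f,M_g)\neq 0$, so $c_s^D$ is not a coboundary on $\mathcal{A}$. The main obstacle is Step 3: one must actually carry out (or cite precisely, e.g.\ from \cite{Ma2006-2,Ma2003,CDMP}) the computation identifying $c_s^D$ on commuting multiplication operators with the loop-algebra cocycle, and confirm it is non-zero — the rest of the argument is formal algebra.
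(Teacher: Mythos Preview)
The paper does not give its own proof of this theorem; it is stated as a result ``recalled'' from \cite{Ma2006-2}. Your proposal is therefore supplying a proof rather than competing against one in the text, and its overall strategy is correct and is the standard one.

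One small imprecision in Step~1: Corollary~\ref{0-hom} as stated only gives $\sigma(\epsilon\circ A)=\sigma(\epsilon)\cdot\sigma(A)$. For the other direction $\sigma(A\circ\epsilon)=\sigma(A)\cdot\sigma(\epsilon)$ you need that $\sigma(\epsilon)$ is independent of $x$ (so the $D_x^\alpha$-terms in \eqref{comp2} vanish for $\alpha\geq 1$), not that its $\xi$-derivatives vanish. Both facts are available (Lemma~\ref{l1}(iii) and Lemma~\ref{l2}), and Proposition~\ref{pag0} already records the two-sided commutation at the formal-symbol level, so you may simply cite that. In Step~2 note that with the paper's convention $\epsilon(D)=p_{H_+}-p_{H_-}$ on $H_+\oplus H_-=L^2$ one has $\epsilon(D)^2=Id$ exactly, not just modulo smoothing, so the algebraic identity between the two expressions for $c_s^D$ is clean.

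Your Step~3 argument --- multiplication operators commute, so any coboundary vanishes on them, while $c_s^D$ restricted to $C^\infty(S^1,\C^k)$ is the non-zero loop-algebra cocycle --- is the correct route and is exactly what is done in \cite{Ma2006-2}; the only substantive input to cite is the identification of $c_s^D|_{C^\infty}$ with (a non-zero multiple of) $\int_{S^1}\tr(f\,dg)$.
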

Along this cocycle, we have to mention two others. First, the Kravchenko-Khesin cocycle \cite{KK}, defined on Adler series $a = \sum_{n \leq k} a_n \xi^n$ and $b= \sum_{m\leq l} b_m \xi^m$ by
$$c_{KK}(a,b) = \res \left(a[b,\log\xi]\right)$$
which pulls-back on the algebra $Cl(S^1,\C^k),$ following a procedure first described by Radul \cite{Rad} to our knowledge, to a cocycle called Kravchenko-Khesin-Radul cocycle in \cite{Ma2006-3}, defined in its polarized version by
$$ c_{KKR}(A,B) = \tr^{|D|}\left([A_{++},B_{++}]\right) = \frac{1}{2\pi}\res_W\left(A_{++}[B_{++}, log|D|]\right),$$
 which is the pull-back, up to the constant $\frac{1}{2\pi},$ of $c_{KK}$ through the map $$A \in Cl(S^1,\C^k)\mapsto \sigma_+(A).$$
 Secondly, the index cocycle described in \cite{PS} and extended to $Cl(S^1,\C^k)$ in \cite{Ma2006-2}, defined  by:
 $$\lambda(A,B) = \tr\left([A_{++},B_{++}] - [A,B]_{++}\right).$$
 We have to remark that, in order to have a well-defined formula, the operator
 $[A_{++},B_{++}] - [A,B]_{++}$ needs to be trace-class. When $A$ and $B$ are pseudo-differential operators, this operator is smoothing and hence the trace is well-defined.
 Following \cite{Ma2006-2}, we can state:
 \begin{Proposition}
 	On $PDO(S^1,\mathbb{C}^k),$ $$\lambda = \frac{1}{2}c_s.$$ Moreover, on any algebra $\mathcal{A}$ such that $C^{\infty}(S^1,\mathbb{C}^k)\subset \mathcal{A} \subset Cl(S^1,\mathbb{C}^k),$
 	the cocycles $c_{KKR}, \lambda$ and $\frac{1}{2}c_s$ are non trivial in Hoschild cohomology, and belong to the same cohomology class. 
 \end{Proposition}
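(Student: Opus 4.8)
The plan is to prove the two assertions in turn: first the identity $\lambda = \tfrac12 c_s$ on $PDO(S^1,\C^k)$, and then the cohomological non-triviality and coincidence of the classes of $c_{KKR}$, $\lambda$ and $\tfrac12 c_s$ on any intermediate algebra $\mathcal A$.

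For the identity, I would start from the defining formula $\lambda(A,B) = \tr\bigl([A_{++},B_{++}] - [A,B]_{++}\bigr)$ and expand $A_{++} = p_+ A p_+$, etc., using $p_+ + p_- = Id$ and $\epsilon(D) = p_+ - p_-$. Writing $A = A_{++} + A_{+-} + A_{-+} + A_{--}$ in the obvious block notation, a direct expansion of $[A_{++},B_{++}] - [A,B]_{++}$ collects the off-diagonal cross terms $A_{+-}B_{-+} - B_{+-}A_{-+}$ (the $++$-block of $[A,B]$ that is not captured by $[A_{++},B_{++}]$). Taking traces and re-packaging in terms of $\epsilon(D)$ gives, after a short computation, $\tr\bigl([A_{++},B_{++}]-[A,B]_{++}\bigr) = \tfrac12\tr\bigl(\epsilon(D)[\epsilon(D),A][\epsilon(D),B]\bigr)$, which is exactly $\tfrac12 c_s(A,B)$ by the second expression for $c_s^D$ in Theorem~\ref{th1}. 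The key point making every trace here well-defined is Theorem~\ref{th1}: $[\epsilon(D),A]$ and $[\epsilon(D),B]$ are smoothing, hence trace-class, so all the intermediate operators whose traces appear are trace-class and the manipulations (cyclicity of $\tr$ on products with a trace-class factor) are legitimate.

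For the cohomological statements I would argue as follows. That $\frac12 c_s$ is non-trivial on any $\mathcal A$ with $C^\infty(S^1,\C^k)\subset\mathcal A\subset Cl(S^1,\C^k)$ is already the content of Theorem~\ref{th1}, so by the identity just proved $\lambda$ is non-trivial as well. For $c_{KKR}$, recall $c_{KKR}(A,B) = \tr^{|D|}\bigl([A_{++},B_{++}]\bigr)$; using formula \eqref{crochet} of Proposition~\ref{p6}, $\tr^{|D|}[A_{++},B_{++}] = -\res\bigl(A_{++}[B_{++},\log|D|]\bigr)/1$, i.e.\ the Wodzicki-residue expression given in the text. To compare $c_{KKR}$ with $\lambda$ I would show that their difference is a coboundary: the operator $[A_{++},B_{++}]$ differs from $[A_{++},B_{++}] - [A,B]_{++}$ precisely by $[A,B]_{++}$, and $\tr^{|D|}$ applied to $[A_{++},B_{++}]$ versus $\tr$ applied to $[A_{++},B_{++}]-[A,B]_{++}$ differ by the linear functional $A\mapsto$ (regularized vs.\ ordinary trace of) $[A,B]_{++}$, whose obstruction to being a coboundary is again governed by a Wodzicki residue which one checks vanishes using that $[\epsilon(D),A]$ is smoothing. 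Concretely, one exhibits a $1$-cochain $\mu$ on $\mathcal A$ with $c_{KKR} - \lambda = \delta\mu$ by tracking the difference between $\tr^{|D|}$ and the naive trace on the relevant smoothing operators; restricted to the loop algebra or to $Vect(S^1)$ this reproduces the classical statement that these all represent (a multiple of) the standard central cocycle.

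The main obstacle will be the $c_{KKR}$ comparison: unlike the $\lambda = \tfrac12 c_s$ identity, which is a finite algebraic manipulation with trace-class operators, showing $[c_{KKR}] = [\lambda]$ requires carefully isolating where the regularized trace $\tr^{|D|}$ fails to be tracial — formula \eqref{crochet} says this failure is a Wodzicki residue $-\res(A[B,\log|D|])$ — and then verifying that on the $++$-blocks this residue term is itself the coboundary of an explicit $1$-cochain (rather than a genuinely new cohomology class). This is where one must invoke the structure of $\F Cl_+(S^1,V)$, the fact that $\log|D|$ has scalar symbol $\log|\xi|$ independent of $x$, and Lemma~\ref{l2} / Corollary~\ref{0-hom} to control the homogeneous components; the bookkeeping of partial symbols of order $-1$ is the delicate part. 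Everything else — non-triviality, well-definedness of the formulas — is inherited from Theorem~\ref{th1} and Propositions~\ref{p5}, \ref{p6}.
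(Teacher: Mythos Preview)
The paper does not actually prove this proposition: it is quoted from \cite{Ma2006-2} with the phrase ``Following \cite{Ma2006-2}, we can state,'' and no argument is reproduced in the present text. So there is no in-paper proof to compare against; what follows is an assessment of your proposal on its own merits.

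Your argument for the identity $\lambda=\tfrac12 c_s$ is correct and is the standard one: the block expansion gives $[A_{++},B_{++}]-[A,B]_{++}=B_{+-}A_{-+}-A_{+-}B_{-+}$, and unpacking $\tfrac12\tr\bigl(\epsilon[\epsilon,A][\epsilon,B]\bigr)$ in the same block notation yields exactly the same trace, with all manipulations justified by $[\epsilon(D),A]\in Cl^{-\infty}$ from Theorem~\ref{th1}. Non-triviality of $\tfrac12 c_s$ (hence of $\lambda$) on any $\mathcal A\supset C^\infty(S^1,\C^k)$ is, as you say, already contained in Theorem~\ref{th1}.

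Where you go astray is the $c_{KKR}$ comparison: you anticipate having to analyse the Wodzicki residue $\res\bigl(A_{++}[B_{++},\log|D|]\bigr)$ via symbol calculus (Lemma~\ref{l2}, Corollary~\ref{0-hom}) in order to exhibit it as a coboundary, and flag this as ``the delicate part.'' In fact the comparison is a one-line identity. Since $[A_{++},B_{++}]-[A,B]_{++}$ is smoothing, $\tr^{|D|}$ and $\tr$ agree on it, so
\[
c_{KKR}(A,B)-\lambda(A,B)=\tr^{|D|}\bigl([A_{++},B_{++}]\bigr)-\tr^{|D|}\bigl([A_{++},B_{++}]-[A,B]_{++}\bigr)=\tr^{|D|}\bigl([A,B]_{++}\bigr),
\]
which is precisely $\mu([A,B])$ for the linear $1$-cochain $\mu(C)=\tr^{|D|}(C_{++})$ on $\mathcal A$. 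Thus $c_{KKR}-\lambda$ is a coboundary directly, with no residue computation and no appeal to the non-traciality formula \eqref{crochet}: you never evaluate $\tr^{|D|}$ on a commutator except in the tautological form $\mu([A,B])$. Once $[c_{KKR}]=[\lambda]$, non-triviality of $c_{KKR}$ follows from that of $\lambda$. Your last paragraph is therefore solving a harder problem than the one actually posed.
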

\subsection{$GL_{res}$ and its subgroups of Fourier-integral operators}
Let us now turn to the Lie group of bounded operators described in \cite{PS}:
$$ GL_{res}(S^1,\mathbb{C}^k) = \lbrace u \in GL(L^2(S^1,\mathbb{C}^k)) \hbox{ such that } [\epsilon(D),u] \hbox{ is Hilbert-Schmidt }\rbrace$$
with Lie algebra
$$ \mathcal{L}(S^1,\mathbb{C}^k) = \lbrace u \in \mathcal{L}(L^2(S^1,\mathbb{C}^k)) \hbox{ such that } [\epsilon(D),u] \hbox{ is Hilbert-Schmidt }\rbrace.$$
 
 \begin{Proposition} \cite[Theorem 25-26]{Ma2016}
 	$FCl_{Diff_+(S^1)}^{0,*}(S^1,\C^k) \subset GL_{res}(S^1,\mathbb{C}^k)$
 	\end{Proposition}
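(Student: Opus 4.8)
The plan is to show that every element $A \in FCl_{Diff_+(S^1)}^{0,*}(S^1,\C^k)$ is a bounded invertible operator on $L^2(S^1,\C^k)$ whose commutator with $\epsilon(D)$ is Hilbert-Schmidt. Write $A = B \circ g$ with $B \in Cl^{0,*}(S^1,\C^k)$ and $g \in Diff_+(S^1)$. First I would recall that an order $0$ classical pseudo-differential operator is bounded on $L^2$, and that composition on the right by a smooth orientation-preserving diffeomorphism of $S^1$ is a bounded invertible operator on $L^2(S^1,\C^k)$ (with inverse given by composition with $g^{-1}$, the change of variable introducing only a smooth nonvanishing Jacobian factor which is a bounded multiplication operator). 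Hence $A \in GL(L^2(S^1,\C^k))$. It remains to control $[\epsilon(D),A]$.

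The key reduction is multiplicativity of the commutator: $[\epsilon(D), B\circ g] = [\epsilon(D),B]\circ g + B\circ[\epsilon(D),g]$, where I abbreviate by $g$ the composition operator $f \mapsto f\circ g$. By Theorem \ref{th1}, $[\epsilon(D),B] \in Cl^{-\infty}(S^1,\C^k)$ is smoothing, hence Hilbert-Schmidt (indeed trace-class); composing a Hilbert-Schmidt operator on the right with the bounded operator $g$ keeps it Hilbert-Schmidt. Since $B$ is bounded, it then suffices to prove that $[\epsilon(D), g]$ is Hilbert-Schmidt for every $g \in Diff_+(S^1)$. Equivalently, writing $U_g$ for the composition (substitution) operator, one checks that $U_g^{-1}\epsilon(D) U_g - \epsilon(D)$ is Hilbert-Schmidt: conjugating $\epsilon(D)$ by the Fourier integral operator $U_g$ produces again a classical pseudo-differential operator of order $0$ whose principal symbol is $\sigma_0(\epsilon(D))$ transported by the cotangent lift of $g$, namely $\xi/|\xi|$ pushed forward, which equals $\xi/|\xi|$ again because $g$ is orientation preserving (the cotangent lift preserves the sign of $\xi$). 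Therefore $U_g^{-1}\epsilon(D)U_g - \epsilon(D)$ has vanishing principal symbol, hence is a classical pseudo-differential operator of order $\le -1$ on $S^1$, hence Hilbert-Schmidt on $L^2(S^1)$ (an order $-s$ operator with $s > 1/2$ is Hilbert-Schmidt in dimension one). Then $[\epsilon(D),U_g] = U_g\bigl(U_g^{-1}\epsilon(D)U_g - \epsilon(D)\bigr)$ is Hilbert-Schmidt, and assembling the two terms gives $[\epsilon(D),A] \in \mathcal{L}_2$, i.e. $A \in GL_{res}(S^1,\C^k)$.

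I expect the main obstacle to be the transformation rule for $\epsilon(D)$ under conjugation by $U_g$: one must justify carefully that $U_g^{-1}\epsilon(D)U_g$ is again a classical $\Psi$DO of order $0$ (this is the statement that even-even and even-odd classes are stable under the adjoint action of $Diff(S^1)$, already invoked in the excerpt before the definition of these groups, applied here to $\epsilon(D)$ which is even-odd class) and to compute its principal symbol via the standard Egorov-type change-of-coordinates formula from \cite{Gil}. The orientation-preserving hypothesis enters precisely here: it guarantees $dg > 0$, so the induced map on $T^*S^1\setminus S^1$ sends $\{\xi>0\}$ to $\{\xi>0\}$ and $\{\xi<0\}$ to $\{\xi<0\}$, leaving the symbol $\xi/|\xi|$ fixed to leading order. (This is also why $Diff_-(S^1)$ must be excluded: an orientation-reversing diffeomorphism would flip the sign and conjugate $\epsilon(D)$ to $-\epsilon(D)$ modulo lower order, and $2\epsilon(D)$ is not Hilbert-Schmidt.) Once the principal symbol is seen to vanish, the Hilbert-Schmidt conclusion is the elementary fact that on the one-dimensional manifold $S^1$ a classical $\Psi$DO of order $\le -1$ has rapidly-enough decaying singular values; the remaining bookkeeping — boundedness of $B$, of $g$, and of $g^{-1}$, and the ideal property of $\mathcal{L}_2$ under multiplication by bounded operators — is routine.
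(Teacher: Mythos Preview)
The paper does not supply a proof of this proposition at all: it is stated with a citation to \cite[Theorem 25--26]{Ma2016} and used as a black box. Your argument is correct and is the natural one: decompose $A=B\circ g$, use Theorem~\ref{th1} for the $B$-part, and for the $g$-part invoke the change-of-coordinates (Egorov-type) formula to see that $U_g^{-1}\epsilon(D)U_g-\epsilon(D)$ has vanishing principal symbol because an orientation-preserving $g$ fixes the sign of $\xi$, hence is of order $\le -1$ and Hilbert--Schmidt on $S^1$.

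One remark worth making is that the cited result (see also the Lemma immediately following the proposition in the paper) actually asserts the stronger conclusion $[\epsilon(D),A]\in Cl^{-\infty}(S^1,\C^k)$, not merely Hilbert--Schmidt. Your Egorov argument yields this too with almost no extra work: the full symbol of $\epsilon(D)$ is $\xi/|\xi|$, which is $0$-homogeneous and constant in $x$, so by Lemma~\ref{l2} all its $\xi$-derivatives vanish and all its $x$-derivatives vanish; hence every term in the asymptotic expansion of the transformed symbol beyond the principal one is zero, and $U_g^{-1}\epsilon(D)U_g-\epsilon(D)$ is smoothing, not just order $-1$. This matches what the paper uses downstream. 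Your weaker conclusion is of course sufficient for the inclusion in $GL_{res}$ as stated.
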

 	
 Let us now give a new light on an old result present in \cite{PS} from a topological viewpoint, expressed by remarks but not stated clearly in the mathematical litterature to our knowledge. We choose here a new approach for the proof, more easy and much more fast, and adapted to our approach of (maybe generalized) differentiability pior to topological considerations. 
 \begin{Lemma}
 	The injection map $ Diff_+(S^1) \hookrightarrow GL_{res}(S^1,\mathbb{C}^k)$
 	is not differentiable.
 \end{Lemma}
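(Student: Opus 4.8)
The plan is to show that the obstruction to differentiability already appears at the level of the derivative at the identity, i.e. on the tangent space $Vect(S^1)$. If the inclusion $\iota : Diff_+(S^1) \hookrightarrow GL_{res}(S^1,\C^k)$ were differentiable, then its tangent map at $\mathrm{Id}_{S^1}$ would be a (continuous, linear) map $Vect(S^1) \to \mathcal{L}(S^1,\C^k)$; concretely, a vector field $X = f(x)\frac{d}{dx}$ would be sent to the first-order differential operator it generates (acting on $L^2(S^1,\C^k)$ by composition-with-flow, whose infinitesimal generator is the Lie derivative $\mathcal{L}_X = f\frac{d}{dx} + \frac12 f'$ or the transport operator $f \frac{d}{dx}$ depending on the chosen action). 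The key point is that membership in $\mathcal{L}(S^1,\C^k)$ requires $[\epsilon(D), u]$ to be Hilbert--Schmidt, and I would show this fails for such a first-order operator.

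First I would fix the action: $Diff_+(S^1)$ acts on $L^2(S^1,\C^k)$ by $g \cdot u = u \circ g^{-1}$ (suitably normalized by the Jacobian to be unitary), so that a smooth path $t \mapsto g_t$ through the identity with $\dot g_0 = X$ differentiates to an unbounded operator $A_X$ which, up to a bounded $0$-order term, is $X = f\frac{d}{dx}$, a differential operator of order $1$. Next I would compute $[\epsilon(D), A_X]$ at the symbol level: by Lemma \ref{l1}, $\sigma(\epsilon(D)) = \xi/|\xi|$ is $0$-homogeneous, so by Corollary \ref{0-hom} and the composition rule \eqref{comp2}, the formal symbol of $[\epsilon(D), A_X]$ has leading term governed by $-i\, \partial_\xi(\xi/|\xi|)\, \partial_x\sigma(A_X) = 0$ in the ordinary sense, but one must be careful: $\xi/|\xi|$ is \emph{not} smooth at $\xi = 0$, and the commutator $[\epsilon(D), A_X]$ is actually a \emph{smoothing} operator by the same mechanism as in Theorem \ref{th1} (applied to $A_X \in PDO(S^1,\C^k)$). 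So $[\epsilon(D), A_X] \in Cl^{-\infty}$, hence is trace-class and in particular Hilbert--Schmidt — which at first sight suggests $A_X \in \mathcal{L}(S^1,\C^k)$ after all, and would make the tangent map well-defined pointwise.

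The actual obstruction, therefore, is not pointwise membership but \emph{continuity/boundedness} of the candidate tangent map, or equivalently the failure of differentiability in the sense of a well-defined bounded linear differential. Here is the route I expect to work: differentiability of $\iota$ at $\mathrm{Id}$ means that for a smooth path $g_t$ with $g_0 = \mathrm{Id}$, the difference quotient $\frac{1}{t}(\iota(g_t) - \iota(g_0)) = \frac{1}{t}(g_t^{-1} \cdot (-) - \mathrm{Id})$ must converge \emph{in the topology of $GL_{res}$}, i.e. with the operator norm on the diagonal blocks and the Hilbert--Schmidt norm on the off-diagonal blocks $[\epsilon(D), \cdot]$. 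I would take an explicit path, e.g. rotations composed with a fixed non-trivial perturbation, or the flow of $X = \cos(x)\frac{d}{dx}$, and estimate $\big\| [\epsilon(D), g_t^{-1}\cdot(-)] - \mathrm{Id}\big\|_{HS}$: the point is that while the \emph{limiting} commutator $[\epsilon(D), A_X]$ is smoothing, the \emph{finite-$t$} operators $g_t^{-1}\cdot(-)$ are genuine Fourier integral operators whose phase is a non-trivial diffeomorphism, and $[\epsilon(D), g_t^{-1}\cdot(-)]$ is Hilbert--Schmidt but with $\|\cdot\|_{HS}$ \emph{not} $O(t)$ — indeed one shows $\frac{1}{t}\|[\epsilon(D), g_t^{-1}\cdot(-)]\|_{HS} \to \infty$ (or fails to converge), because conjugating $\epsilon(D)$ by the flow moves the singularity of its symbol at $\xi = 0$ in a way that produces off-diagonal Hilbert--Schmidt norm of order $\sqrt{t}$ rather than $t$, coming from the $L^2$-mass of the jump discontinuity being spread over $O(1/t)$ Fourier modes each of size $O(t)$, giving $\|\cdot\|_{HS}^2 \sim (1/t)\cdot t^2 = t$, hence $\|\cdot\|_{HS} \sim \sqrt t$ and $\frac1t\|\cdot\|_{HS}\sim t^{-1/2}\to\infty$.

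\textbf{Main obstacle.} The hard part will be making this last estimate rigorous: one must track, for the Fourier integral operator $U_{g_t} : u \mapsto \sqrt{|(g_t^{-1})'|}\, (u\circ g_t^{-1})$, the precise size in Hilbert--Schmidt norm of the off-diagonal part of $U_{g_t}\epsilon(D)U_{g_t}^{-1} - \epsilon(D)$. This is essentially the statement that $U_{g_t}$ leaves $GL_{res}$ (true, by the quoted Proposition, $U_{g_t}\in FCl^{0,*}_{Diff_+(S^1)}\subset GL_{res}$) but that the assignment $t\mapsto U_{g_t}$ is only Hölder-$\frac12$, not Lipschitz, into $GL_{res}$ at $t=0$ — which is exactly non-differentiability. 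The cleanest way is to reduce to a model computation on $L^2(S^1,\C)$ with $\epsilon(D) = $ multiplication by $\mathrm{sgn}$ on Fourier modes, $g_t$ a small rotation-like perturbation, and compute the matrix coefficients $\langle e^{imx}, (U_{g_t}\epsilon(D)U_{g_t}^* - \epsilon(D)) e^{inx}\rangle$ explicitly via stationary phase / Fourier coefficients of $e^{i(\cdot)}$, showing the sum of squares of the $(m,n)$ with $mn<0$ is $\sim C t$ with $C\neq 0$; the alternative, and perhaps more in the spirit of the paper, is to argue structurally that if $\iota$ were differentiable the Schwinger cocycle $c_s^D$ pulled back to $Vect(S^1)$ would have to coincide with the (bounded-operator) $GL_{res}$-cocycle evaluated on bounded generators, contradicting the fact established in Theorem \ref{th1} and the subsequent Proposition that $c_s^D$ restricted to $Vect(S^1)$ is the non-trivial Gelfand--Fuchs cocycle, which is \emph{not} continuous for the Hilbert--Schmidt-type topology coming from $\mathcal{L}(S^1,\C^k)$. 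I would present the direct estimate as the main argument and mention the cocycle-theoretic interpretation as a remark.
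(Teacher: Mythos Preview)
You have overlooked the elementary obstruction that the paper actually uses, and as a result you have built an elaborate analytic argument around the wrong part of the definition of $\mathcal{L}(S^1,\C^k)$.

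Recall that, by definition,
\[
\mathcal{L}(S^1,\C^k)=\bigl\{u\in\mathcal{L}(L^2(S^1,\C^k))\ :\ [\epsilon(D),u]\ \text{is Hilbert--Schmidt}\bigr\},
\]
so membership imposes \emph{two} conditions: (i) $u$ is a \emph{bounded} operator on $L^2$, and (ii) $[\epsilon(D),u]$ is Hilbert--Schmidt. You correctly verify (ii) for $A_X$ (indeed $[\epsilon(D),A_X]\in Cl^{-\infty}$ by Theorem \ref{th1}), and then write ``which at first sight suggests $A_X\in\mathcal{L}(S^1,\C^k)$ after all''. This is precisely the misstep: condition (i) fails, because $A_X$ is a first-order differential operator and hence unbounded on $L^2$. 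The paper's proof is nothing more than this observation: $GL_{res}$ acts smoothly on $L^2$, so its Lie algebra consists of bounded operators; $Vect(S^1)$ is represented by unbounded operators; therefore no differentiable tangent map $Vect(S^1)\to\mathcal{L}(S^1,\C^k)$ can exist.

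Your H\"older-$\tfrac12$ estimate on the off-diagonal Hilbert--Schmidt norm is therefore unnecessary, and is in any case suspect: since $p_{H_\pm}A_Xp_{H_\mp}$ are smoothing (again by Theorem \ref{th1}), one expects $t\mapsto p_{H_\pm}U_{g_t}p_{H_\mp}$ to be differentiable in HS norm at $t=0$ with derivative $p_{H_\pm}A_Xp_{H_\mp}$, giving $O(t)$ rather than $O(\sqrt t)$. The genuine failure of differentiability occurs on the \emph{diagonal} blocks, where $t\mapsto p_{H_+}U_{g_t}p_{H_+}$ cannot be differentiable in operator norm because its would-be derivative $p_{H_+}A_Xp_{H_+}$ is unbounded --- which is again just the boundedness obstruction (i). Your cocycle-theoretic alternative is interesting but likewise superfluous here.
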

\begin{proof} Let us assume that the injection map $ Diff_+(S^1) \hookrightarrow GL_{res}(S^1,\mathbb{C}^k)$ is differentiable. The group $GL_{res}(S^1,\mathbb{C}^k)$ is acting smoothly on $L^2(S^1,\C^n)$ and hence the Lie algebra of $GL_{res}(S^1,\mathbb{C}^k)$ is a Lie algebra of bounded operators acting on $L^2(S^1,\C^n).$ The Lie algebra $Vect(S^1)$ is a Lie algebra of unbounded operators acting on $L^2(S^1,\C^n)$ hence the injection map $ Diff_+(S^1) \hookrightarrow GL_{res}(S^1,\mathbb{C}^k)$ is not differentiable. 
	\end{proof}
From this Lemma, next theorem is straightforward:
 \begin{Theorem}
 The injection maps 		$FCl_{Diff_+(S^1)}^{0,*}(S^1,\C^k) \hookrightarrow GL_{res}(S^1,\mathbb{C}^k)$
 and $DO^{0,*}(S^1,\mathbb{C}^k) \rtimes Diff_+(S^1) \hookrightarrow GL_{res}(S^1,\mathbb{C}^k)$ are not differentiable.
 	\end{Theorem}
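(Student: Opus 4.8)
The plan is to deduce this theorem directly from the preceding Lemma, exactly as the phrase "next theorem is straightforward" suggests. The key observation is that both $FCl_{Diff_+(S^1)}^{0,*}(S^1,\C^k)$ and $DO^{0,*}(S^1,\mathbb{C}^k) \rtimes Diff_+(S^1)$ contain $Diff_+(S^1)$ as a subgroup (the "phase" component), and the inclusion of this subgroup into either of these groups is a smooth embedding by the very definition of the Frölicher/ILB structures discussed in the preliminaries — the diffeomorphism component carries its standard regular Fréchet Lie group structure from \cite{Om}. So the plan is: first, observe that we have a commuting triangle of injections
\begin{equation*}
Diff_+(S^1) \hookrightarrow FCl_{Diff_+(S^1)}^{0,*}(S^1,\C^k) \hookrightarrow GL_{res}(S^1,\mathbb{C}^k),
\end{equation*}
and similarly with $DO^{0,*}(S^1,\mathbb{C}^k) \rtimes Diff_+(S^1)$ in the middle.

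Second, I would argue by contradiction: if either of the two injection maps in the statement were differentiable, then composing with the smooth inclusion $Diff_+(S^1) \hookrightarrow FCl_{Diff_+(S^1)}^{0,*}(S^1,\C^k)$ (resp. $Diff_+(S^1) \hookrightarrow DO^{0,*}(S^1,\mathbb{C}^k) \rtimes Diff_+(S^1)$, which is just $g \mapsto (Id, g)$) would exhibit the injection $Diff_+(S^1) \hookrightarrow GL_{res}(S^1,\mathbb{C}^k)$ as a composition of differentiable maps, hence differentiable. This contradicts the Lemma. The only thing to check carefully is that the inclusion of $Diff_+(S^1)$ into each middle group is genuinely smooth in whatever differentiability sense (Frölicher structure) is being used; this is essentially built into the construction of these groups as semidirect products $Cl^{0,*}(S^1,V) \rtimes Diff(S^1)$ (resp. $DO^{0,*} \rtimes Diff_+(S^1)$) equipped with the product diffeology, where $Diff_+(S^1) = \{Id\} \times Diff_+(S^1)$ sits as a smooth subgroup.

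The main (and only real) obstacle is making precise that "differentiable" here means the same notion on all three groups so that the composition argument is legitimate — i.e., that the Frölicher Lie group structures are compatible with the inclusions. Since the paper has already committed (in the preliminaries and Remark \ref{Baaj} and the surrounding remarks) to the convention that smoothness is inherited from the embedding into $Cl(S^1,V)$ on the pseudo-differential part and from the ILB structure of \cite{Om} on the diffeomorphism part, this compatibility holds by construction, and the inclusion $Diff_+(S^1)\hookrightarrow FCl^{0,*}_{Diff_+(S^1)}(S^1,\C^k)$ is a smooth group monomorphism. With that in hand the proof is a one-line diagram chase, which is why it is "straightforward". I would keep the written proof to two or three sentences: state the factorization, invoke functoriality of differentiability under composition, and cite the Lemma for the contradiction.
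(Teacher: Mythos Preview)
Your proposal is correct and matches the paper's intended argument exactly: the paper gives no explicit proof, simply writing ``From this Lemma, next theorem is straightforward,'' which is precisely the factorization-and-contradiction you describe. Your care about compatibility of smoothness notions is appropriate but, as you note, already handled by the product/Fr\"olicher structure discussed in the preliminaries.
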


 \subsection{Index cocycle on $FCl_{Diff(S^1)}^{0,*}(S^1,\C^k)$}
 Let us now recall a result from \cite{PS}:
 \begin{Proposition}
 	Let us consider the right-invariant 2 form generated by $\lambda$ on $GL_{res},$ that we note by $\tilde \lambda.$ Then $\tilde \lambda$ is a closed, non exact 2-form on $GL_{res}.$
 \end{Proposition}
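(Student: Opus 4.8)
The plan is to reduce the statement to the classical fact from Pressley--Segal that the index cocycle $\lambda$ defines a nontrivial, nonexact left- (or right-) invariant $2$-form on $GL_{res}$, and then organize the verification into three steps: invariance, closedness, and nonexactness. First I would make precise what $\tilde\lambda$ is: at a point $u \in GL_{res}$, the tangent space is identified with $\mathcal{L}(S^1,\mathbb{C}^k)$ by right translation $X \mapsto Xu$, and we set $\tilde\lambda_u(Xu, Yu) = \lambda(X,Y)$ where $\lambda(A,B) = \tr([A_{++},B_{++}] - [A,B]_{++})$. One checks this is well-defined because, for $A,B \in \mathcal{L}(S^1,\mathbb{C}^k)$ (so $[\epsilon(D),A]$, $[\epsilon(D),B]$ Hilbert--Schmidt), the operator $[A_{++},B_{++}] - [A,B]_{++}$ is trace class: expanding in the block decomposition induced by $\epsilon(D)$, each term is a product of two off-diagonal blocks, each of which is Hilbert--Schmidt, hence the product is trace class. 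This is exactly the computation underlying the well-definedness remark already made in the excerpt for pseudo-differential operators, now in the bounded setting.

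Next I would establish right-invariance: this is immediate from the definition, since $\tilde\lambda$ is \emph{defined} by right translation of a fixed bilinear form on the Lie algebra; the only thing to check is that $\lambda$ is indeed a Lie-algebra $2$-cocycle (antisymmetric and satisfying the cocycle identity $\lambda([A,B],C) + \lambda([B,C],A) + \lambda([C,A],B) = 0$), which guarantees that the right-invariant form it generates is closed. Closedness of a right-invariant form $\tilde\lambda$ on a Lie group is equivalent to the cochain $\lambda$ being a cocycle for the Chevalley--Eilenberg differential, via the standard formula $d\tilde\lambda(X,Y,Z) = -\tilde\lambda([X,Y],Z) + \tilde\lambda([X,Z],Y) - \tilde\lambda([Y,Z],X)$ for right-invariant vector fields. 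So closedness reduces to the cocycle identity for $\lambda$, which follows from the fact that $\lambda = \tfrac12 c_s$ on pseudo-differential operators and, more generally, that $\lambda$ agrees with the Schwinger-type cocycle $A,B \mapsto \tfrac12\tr(\epsilon(D)[\epsilon(D),A][\epsilon(D),B])$ on $\mathcal{L}(S^1,\mathbb{C}^k)$; the latter is manifestly a cocycle by a direct trace manipulation using the Jacobi identity and cyclicity of the trace on trace-class operators.

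For nonexactness, the argument is the genuinely nontrivial part and I expect it to be the main obstacle to presenting cleanly. The standard route is to exhibit a $2$-cycle in $GL_{res}$ on which $\tilde\lambda$ integrates to a nonzero value. One takes a smooth map $S^2 \to GL_{res}$ — concretely, a loop of the generator of $\pi_1(U_{res}) \cong \mathbb{Z}$, or a sphere built from multiplication operators and $\epsilon(D)$ — and computes $\int_{S^2}\tilde\lambda$; by the theory in \cite{PS}, $\tilde\lambda/2\pi i$ represents the generator of $H^2(GL_{res};\mathbb{Z})$, so the integral is a nonzero integer multiple of $2\pi i$. Equivalently, and perhaps more in the spirit of the present paper, one invokes that $\tilde\lambda$ is the curvature of the determinant line bundle $\mathrm{Det} \to GL_{res}$ (the obstruction to trivializing it), and this bundle is nontrivial precisely because $GL_{res}$ is not simply connected; since the curvature of a nontrivial Hermitian line bundle cannot be exact (its cohomology class is $2\pi i$ times the first Chern class, which is nonzero), $\tilde\lambda$ is not exact. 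I would cite \cite{PS} for the nontriviality of $\mathrm{Det}$ and for the identification of $\tilde\lambda$ with its curvature, and present the nonexactness as a corollary, flagging that a self-contained proof would require reproducing the Pressley--Segal computation of $\int_{S^2}\tilde\lambda$.
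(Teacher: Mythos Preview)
The paper does not give its own proof of this proposition: it is stated as a result recalled from \cite{PS} (Pressley--Segal), with no argument supplied. Your proposal is therefore not being compared against a proof in the paper but against a citation; what you have written is a correct and well-organized outline of the argument that \cite{PS} actually contains, and you correctly identify that nonexactness is the substantive step requiring the determinant line bundle or an explicit $2$-cycle computation, which you appropriately defer to \cite{PS}.
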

 \begin{Lemma}
 	Let $\gamma: \R \rightarrow FCl_{Diff_+(S^1)}^{*}(S^1,\C^k)$ be a smooth path. Then the path $$ t \in \R \mapsto \left[\epsilon(D),\gamma(t)\right]$$ is a smooth path of smoothing operators with respect to any of these differentiable structures: 
 	\begin{itemize}
 		\item the differentiable structure of $FCl_{Diff_+(S^1)}^{*}(S^1,\C^k)$
 		\item  the differentiable structure of $GL_{res}(S^1,\mathbb{C}^k).$
 	\end{itemize}
 \end{Lemma}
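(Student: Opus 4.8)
The plan is to reduce the statement to the already-established facts about $\epsilon(D)$ and then check that smoothness of the path $t \mapsto [\epsilon(D),\gamma(t)]$ holds for each of the two differentiable structures separately. First I would write, for a smooth path $\gamma : \R \to FCl^*_{Diff_+(S^1)}(S^1,\C^k)$, the canonical decomposition $\gamma(t) = B(t) \circ g(t)$ with $B(t) \in Cl^*(S^1,V)$ and $g(t) \in Diff_+(S^1)$; by the uniqueness of this decomposition and the definition of the Fr\"olicher (product) structure on $FCl^*_{Diff_+(S^1)}(S^1,V)$, the paths $t \mapsto B(t)$ and $t \mapsto g(t)$ are smooth into $Cl^*(S^1,V)$ and $Diff_+(S^1)$ respectively. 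Then I would compute
$$ [\epsilon(D),\gamma(t)] = [\epsilon(D), B(t)\circ g(t)] = [\epsilon(D),B(t)]\circ g(t) + B(t)\circ [\epsilon(D), g(t)]. $$
The first summand is a composition of a smoothing operator (Theorem \ref{th1}: $[\epsilon(D),B(t)] \in Cl^{-\infty}(S^1,\C^k)$ for every $t$) with a diffeomorphism, hence smoothing; the second requires controlling $[\epsilon(D),g(t)] = \epsilon(D)\circ g(t) - g(t)\circ \epsilon(D)$, which is the commutator of the order-$0$ operator $\epsilon(D)$ with a change of variables — this is again a smoothing operator because conjugating $\epsilon(D)$ by the diffeomorphism $g(t)$ produces a classical $\Psi$DO of order $0$ whose symbol agrees with that of $\epsilon(D)$ (the sign of $\xi$ is invariant under the action of $Diff_+(S^1)$ on $T^*S^1$), so the difference has vanishing formal symbol. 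Smoothness of $t \mapsto [\epsilon(D),\gamma(t)]$ as a path into $Cl^{-\infty}(S^1,\C^k)$ then follows from smoothness of $t \mapsto B(t)$, $t \mapsto g(t)$ and joint smoothness of composition and of the adjoint action of $Diff_+(S^1)$ on $Cl(S^1,V)$.

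For the second differentiable structure, I would invoke the inclusion $FCl^{0,*}_{Diff_+(S^1)}(S^1,\C^k) \subset GL_{res}(S^1,\C^k)$ together with the fact that the natural notion of smoothness on $FCl^*_{Diff_+(S^1)}(S^1,V)$ and the subspace smoothness inherited from $GL_{res}$ coincide on the relevant paths: a smooth path $\gamma$ into $FCl^*_{Diff_+(S^1)}(S^1,\C^k)$ need not land in $GL_{res}$ (because of unbounded operators), but the derived object $[\epsilon(D),\gamma(t)]$ is smoothing, hence Hilbert--Schmidt, and in fact the map $\gamma \mapsto [\epsilon(D),\gamma]$ factors through the bounded part: one can write $[\epsilon(D),\gamma(t)]$ purely in terms of $[\epsilon(D),B(t)]$ and $[\epsilon(D),g(t)]$, both of which are smoothing and depend smoothly on $B(t), g(t)$ in the operator-norm and Hilbert--Schmidt topologies. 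Thus the same path is smooth into the space of Hilbert--Schmidt operators, which is how smoothness relative to $GL_{res}$ is tested (the Hilbert--Schmidt off-diagonal blocks are exactly the ``tangent'' data that the $GL_{res}$ structure controls).

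The main obstacle I expect is the second bullet: making precise that a path $\gamma$ which is smooth for the $FCl^*_{Diff_+(S^1)}$-structure but which does \emph{not} necessarily take values in $GL_{res}$ nonetheless yields a path $[\epsilon(D),\gamma(t)]$ that is smooth \emph{for the $GL_{res}$ differentiable structure}. The resolution is that ``smooth with respect to the differentiable structure of $GL_{res}$'' should be read as: $t \mapsto [\epsilon(D),\gamma(t)]$ is a smooth curve in the Hilbert--Schmidt operators $\mathcal{L}^2(L^2(S^1,\C^k))$, and this is guaranteed by the explicit formula above once we know composition $Cl^{-\infty} \times \mathrm{(bounded)} \to Cl^{-\infty}$ and the adjoint action $Diff_+(S^1) \times Cl(S^1,V) \to Cl(S^1,V)$ are smooth — both being standard facts recalled (or immediate from the constructions) in the preliminaries. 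A minor subtlety to dispatch along the way is the appearance of the kernel $E_0$: since the projection onto $E_0$ is smoothing, the choice between the two polarizations $H^{(1)}_\pm$ and $H^{(2)}_\pm$ changes $\epsilon(D)$ only by a smoothing operator, so the statement is independent of that choice and one may compute with the symbol-level $\epsilon$ throughout.
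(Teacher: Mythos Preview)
Your argument is correct and rests on the same key fact as the paper's proof, namely that $\bigl[FCl_{Diff_+(S^1)}^{*}(S^1,\C^k),\epsilon(D)\bigr]\subset Cl^{-\infty}(S^1,\C^k)$; the paper simply invokes this inclusion by citing \cite[Theorem~25]{Ma2016}, whereas you unpack it explicitly via the decomposition $\gamma(t)=B(t)\circ g(t)$ and the observation that the $0$-homogeneous symbol $\xi/|\xi|$ is invariant under the cotangent action of orientation-preserving diffeomorphisms. Your treatment of the $GL_{res}$ side is also more explicit than the paper's, which leaves that part implicit in the same citation; your reading of ``smooth for the $GL_{res}$ structure'' as smoothness into the Hilbert--Schmidt class is the correct one, and the continuous embedding $Cl^{-\infty}\hookrightarrow\mathcal L^2(L^2)$ makes the passage from the first bullet to the second immediate.
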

 \begin{proof}
 	The proof follows from $\left[FCl_{Diff_+(S^1)}^{*}(S^1,\C^k),\epsilon(D)\right]\subset Cl^{-\infty}(S^1,\C^k)$ as is stated in \cite[Theorem 25]{Ma2016}.
 \end{proof}
As a consequence, we get: 

\begin{Theorem}
	$\lambda = \frac{1}{2}c_s$ generates a closed, non exact 2-form on $FCl_{Diff_+(S^1)}^{*}(S^1,\C^k)$.
\end{Theorem}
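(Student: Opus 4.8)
The plan is to deduce this theorem from the structure already assembled in this subsection, transferring the Pressley--Segal result on $GL_{res}$ across to $FCl_{Diff_+(S^1)}^{*}(S^1,\C^k)$ via the ``comparison of differentiable structures'' supplied by the preceding Lemma. First I would recall that, by the Proposition just quoted from \cite{PS}, the right-invariant $2$-form $\tilde\lambda$ generated by $\lambda$ is closed and non-exact on $GL_{res}$. The key point is that the formula $\lambda(A,B) = \tr\bigl([A_{++},B_{++}]-[A,B]_{++}\bigr)$ makes sense on the Lie algebra $Cl(S^1,\C^k)\rtimes Vect(S^1)$ precisely because, by Theorem \ref{th1} and the discussion around it, $[A,\epsilon(D)]\in Cl^{-\infty}(S^1,\C^k)$, so that $[A_{++},B_{++}]-[A,B]_{++}$ is smoothing and hence trace-class; this is exactly the content ensuring $\lambda = \tfrac12 c_s$ is a well-defined $2$-cocycle on the relevant Lie algebra. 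So $\lambda$ generates a right-invariant $2$-form, call it again $\tilde\lambda$, on $FCl_{Diff_+(S^1)}^{*}(S^1,\C^k)$.

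Next I would establish closedness. Since $\tilde\lambda$ is right-invariant, its exterior derivative is computed by the Chevalley--Eilenberg formula on the Lie algebra $Cl(S^1,\C^k)\rtimes Vect(S^1)$, and $d\tilde\lambda = 0$ is equivalent to the cocycle identity for $\lambda$, which holds because $\lambda=\tfrac12 c_s$ is a Lie-algebra $2$-cocycle (Theorem \ref{th1}). Alternatively, and this is the approach I would actually write, one uses the previous Lemma: any smooth path $\gamma$ in $FCl_{Diff_+(S^1)}^{*}(S^1,\C^k)$ has $[\epsilon(D),\gamma(t)]$ a smooth path of smoothing operators with respect to \emph{both} differentiable structures simultaneously. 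This means the inclusion of $FCl_{Diff_+(S^1)}^{*}(S^1,\C^k)$ into (a central extension completion of) $GL_{res}$ — while not differentiable as a map of groups by the Lemma above — nevertheless pulls back the $2$-form $\tilde\lambda$ on $GL_{res}$ to the $2$-form $\tilde\lambda$ here: the obstruction to differentiability lives in the $Vect(S^1)$ directions, but $\lambda$ only ever sees these directions through the combination $[\cdot,\epsilon(D)]_{++}$, which is smoothing and varies smoothly along paths in \emph{either} structure. Hence pullback of forms along plots is well-defined, $d$ commutes with pullback, and closedness of $\tilde\lambda$ on $GL_{res}$ descends to closedness on $FCl_{Diff_+(S^1)}^{*}(S^1,\C^k)$.

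For non-exactness, I would argue by contradiction at the level of periods. If $\tilde\lambda = d\eta$ for some invariant $1$-form $\eta$ on $FCl_{Diff_+(S^1)}^{*}(S^1,\C^k)$, restrict to the subgroup $FCl_{Diff_+(S^1)}^{0,*}(S^1,\C^k)$, which by the Proposition of \cite[Theorem 25-26]{Ma2016} sits inside $GL_{res}$ \emph{with} differentiable inclusion on the order-zero part — indeed the non-differentiability is confined to the order-$1$ differential-operator directions in $Vect(S^1)$, so on the bounded subgroup the inclusion into $GL_{res}$ is a genuine morphism. Then $\eta$ would, together with the known $1$-form trivializing nothing on $GL_{res}$, contradict the non-exactness of $\tilde\lambda$ on $GL_{res}$: concretely, one evaluates both sides on a $2$-cycle (a torus or a $2$-sphere built from a loop of unitaries realizing a nonzero index, as in \cite{PS}) lying inside $FCl_{Diff_+(S^1)}^{0,*}(S^1,\C^k)\subset GL_{res}$, where the period $\int \tilde\lambda$ is the Fredholm index and is nonzero, while $\int d\eta = 0$. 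The main obstacle is precisely this last step: one must exhibit a $2$-cycle inside $FCl_{Diff_+(S^1)}^{*}(S^1,\C^k)$ (equivalently inside the bounded subgroup) on which $\tilde\lambda$ has nonzero period, and check that the period computation transported from $GL_{res}$ is not disturbed by the failure of differentiability of the full inclusion — this is handled by confining the cycle to the order-zero, orientation-preserving part where everything is honestly smooth and the \cite{PS} computation applies verbatim.
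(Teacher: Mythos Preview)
Your overall strategy matches the paper's intention: the paper itself offers no detailed proof, writing only ``As a consequence, we get:'' after the Lemma about $t\mapsto[\epsilon(D),\gamma(t)]$ being a smooth path of smoothing operators in both differentiable structures. So you have correctly identified the two ingredients (the cocycle identity for closedness, a nonzero period for non-exactness) and are filling in what the paper leaves implicit.

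There is, however, a genuine error in your non-exactness argument. You write that the inclusion $FCl_{Diff_+(S^1)}^{0,*}(S^1,\C^k)\hookrightarrow GL_{res}$ is ``a genuine morphism'' with ``differentiable inclusion on the order-zero part,'' and that the non-differentiability is ``confined to the order-$1$ differential-operator directions.'' This contradicts the Theorem stated just above in the paper, which says explicitly that this very inclusion is \emph{not} differentiable. The obstruction is not the presence of unbounded pseudo-differential operators: it is the subgroup $Diff_+(S^1)$ itself, which acts by bounded operators on $L^2$ but whose Lie algebra $Vect(S^1)$ consists of unbounded first-order operators. That is precisely the content of the preceding Lemma in the paper. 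So your justification for the period computation collapses.

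The repair is straightforward: do not restrict to $FCl_{Diff_+(S^1)}^{0,*}$, but to a subgroup on which the inclusion into $GL_{res}$ \emph{is} smooth, for instance the loop group $C^\infty(S^1,SU_n)\subset Cl^{0,*}(S^1,\C^k)$ (multiplication operators), or more generally $Cl^{0,*}(S^1,\C^k)$ without the $Diff_+(S^1)$ factor. On such a subgroup the two differentiable structures agree, the \cite{PS} period computation applies verbatim, and the nonzero period there already forces non-exactness on the ambient group $FCl_{Diff_+(S^1)}^{*}(S^1,\C^k)$. Your Chevalley--Eilenberg argument for closedness is correct and is the cleaner route; I would lead with it rather than the pullback heuristic, which runs into the additional difficulty that the full group (containing unbounded operators) does not even sit set-theoretically inside $GL_{res}$.
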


 \section{Renormalized extension of the Hilbert-Schmidt Hermitian metric}\label{s:HS}
 The vector space $Cl^{-1}(S^1,V)$ is a space of Hilbert-Schmidt operators. As a subspace,  $Cl^{-1}(S^1,V)$ inherits a Hermitian metric from the classical Hilbert-Schmidt inner product. The renormalized trace $\tr^\Delta$ extends the classical trace $\tr$ of trace class operators to a smooth linear functional on $Cl(S^1,V).$ We investigate here the possible (maybe naive) extension of the classical Hilbert-Schmidt inner product to $Cl(S^1,V)$ via $\tr^\Delta.$
 \subsection{Calculation of renormalized traces}
Let  $(z^k)_{k \in \mathbb{Z}}$ is the Fourier $L^2-$orthonormal basis. Let us recall that there exists an ambiguity on $\epsilon(D)$ concerning its action on $z^0,$ which can be, or not, in the kernel of $p_+,$ or in the eigenspace of the eigenvalue $1$ or $-1.$ Depending on each of these three possibilities respectively, we set $\epsilon(k)$ as the eigenvalue of $\epsilon(D)$ at the eigenvector $z^k.$
 \begin{Lemma} \label{calcultr}
 	Let $X=u\frac{d}{dx},Y=v\frac{d}{dx}$ be two vector fields over $S^1,$ and let $a,b \in >C^\infty(S^1,\mathbb{C}).$ 
 	Then
 	\begin{enumerate}
 		\item \label{3.} $$\tr^{\Delta}(a\bar{b}) = 0$$
 		\item \label{4.} $$ \tr^{\Delta}(XY^*) = 0 $$ 
 		\item $$ \tr^{\Delta}(Xa)=\tr^\Delta(aX)=0$$ 
 	\end{enumerate} 
 	\end{Lemma}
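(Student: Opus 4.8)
The plan is to compute each renormalized trace directly from the defining spectral formula of $\tr^\Delta$ together with the local formula for the Wodzicki residue (Proposition \ref{p5}), exploiting the fact that multiplication operators and the operators built from $\frac{d}{dx}$ have very explicit matrix coefficients in the Fourier basis $(z^k)_{k\in\Z}$. Throughout, $\Delta$ denotes the (positive, order-$2$) Laplacian weight, so $\Delta^{-s}$ acts diagonally as $z^k \mapsto |k|^{-2s} z^k$ for $k\neq 0$ (with the kernel treated by the usual finite-rank correction). The general strategy for each item is: write the operator $T$ in question, observe that its formal symbol has order $\leq 0$, so that $\res_W(T)$ is governed by $\sigma_{-1}(T)$, then compute $\lim_{s\to 0}\bigl(\tr(T\Delta^{-s}) - \tfrac{1}{2s}\res_W(T)\bigr)$ either by showing the operator is already trace-class (so the limit is the ordinary trace) or by a direct residue computation.

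For item (\ref{3.}): the operator $a\bar b$ is a multiplication operator by the smooth function $a(x)\overline{b(x)}$, hence an order-$0$ classical pseudo-differential operator whose full symbol is $x$-dependent but $\xi$-independent; in particular $\sigma_{-1}(a\bar b)=0$, so $\res_W(a\bar b)=0$ by the local formula in Proposition \ref{p5}(ii). Thus $\tr^\Delta(a\bar b) = \lim_{s\to 0}\tr\bigl((a\bar b)\Delta^{-s}\bigr)$, and the diagonal matrix coefficients are $\langle z^k, (a\bar b)\Delta^{-s} z^k\rangle = |k|^{-2s}\,\widehat{a\bar b}(0)$ for $k\neq 0$, so the zeta function is (up to the finite-rank kernel term) $\widehat{a\bar b}(0)\cdot 2\zeta(2s)$, whose value at $s=0$ is $\widehat{a\bar b}(0)\cdot 2\zeta(0) = -\widehat{a\bar b}(0)$ — wait, this needs care: one must also account for the kernel correction and the precise normalization, and in fact the standard computation (see e.g. \cite{CDMP}) gives $\tr^\Delta$ of a multiplication operator as proportional to $\widehat{a\bar b}(0)$ times a universal constant depending only on the eigenvalue conventions for $z^0$; the assertion is that with the conventions fixed here this constant is $0$. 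Concretely, one pairs $k$ with $-k$: since $\Delta^{-s}$ is even in $k$ while the zeta-regularization $\sum_{k\neq 0}|k|^{-2s}$ analytically continues to $2\zeta(2s)$ and $\zeta(0)=-1/2$, the finite part is $-\widehat{a\bar b}(0)$ from the symmetric part, which is then cancelled exactly by the contribution of $z^0$ under the chosen convention — this cancellation is the content of the lemma and should be spelled out.

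For items (\ref{4.}) and the third: write $X = u\frac{d}{dx}$, $Y = v\frac{d}{dx}$, so $X = iuD$ and $Y^* = \frac{d}{dx}\circ \bar v \cdot (-1)$ after integration by parts, i.e. $Y^* = -\overline{v}\,\frac{d}{dx} - \overline{v'}$; hence $XY^*$ is a classical operator of order $2$. Its leading symbol is $\sigma_2(XY^*)(x,\xi) = u(x)\overline{v(x)}\,\xi^2$, and one needs $\sigma_{-1}(XY^*)$, which by the composition formula \eqref{comp2} involves $x$-derivatives of the top symbols; a direct computation shows $\sigma_{-1}(XY^*)=0$ because all contributing terms are polynomial in $\xi$ of nonnegative degree, leaving no $\xi^{-1}$ term, so $\res_W(XY^*)=0$. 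It then remains to show the finite part of $\tr(XY^*\Delta^{-s})$ at $s=0$ vanishes; here the matrix coefficients are $\langle z^k, XY^*\Delta^{-s}z^k\rangle = |k|^{-2s}\cdot(\text{something proportional to }k^2\,\widehat{u\bar v}(0) + k\cdot(\dots))$, and the regularized sums $\sum_{k\neq 0}|k|^{-2s}k^2 = 2\zeta(2s-2)$ and $\sum_{k\neq 0}|k|^{-2s}k = 0$ (odd) both have value $0$ at $s=0$ since $\zeta(-2)=0$; again one checks the $z^0$ term contributes nothing. The same bookkeeping handles $\tr^\Delta(Xa)$ and $\tr^\Delta(aX)$: these are order-$1$ operators with $\sigma_{-1}=0$, reducing to $\sum_{k\neq 0}|k|^{-2s}k\cdot\widehat{ua}(0)$-type sums that vanish by oddness/parity at $s=0$; the equality $\tr^\Delta(Xa)=\tr^\Delta(aX)$ then follows either from this common value $0$ or, more structurally, from Proposition \ref{p6}, equation \eqref{crochet}, since $\tr^\Delta[a,X] = -\tfrac12\res(a[X,\log\Delta])$ and $[X,\log\Delta]$ is smoothing enough that the residue with the multiplication operator $a$ vanishes.

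The main obstacle will be the careful treatment of the zero mode $z^0$ and the resulting universal constants: the statement of each identity is sensitive to the conventions $\epsilon(0)\in\{0,1,-1\}$ and to how the finite-rank kernel of $\Delta$ is excised in the definition of $\Delta^{-s}$, so the bulk of the work is not the residue computations (which are essentially forced and give $0$) but verifying that the residual finite-dimensional contributions cancel against the analytically-continued $\zeta$-values $\zeta(0)=-\tfrac12$, $\zeta(-2)=0$, etc. I would organize the proof so that this kernel bookkeeping is done once, in a preliminary remark, and then invoked uniformly for all three items.
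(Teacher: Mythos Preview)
Your approach is the paper's approach: compute each zeta-function in the Fourier basis $(z^k)_{k\in\Z}$, reduce to values of the Riemann $\zeta$-function, and use the convention on the zero mode (the paper adopts $0^{-s}=1$, so the $k=0$ term always contributes the raw diagonal coefficient). Your preliminary observation that $\res_W$ vanishes because $\sigma_{-1}=0$ is correct but not used in the paper; they simply observe that the zeta-function is already regular at $s=0$, so no pole needs to be subtracted. Your handling of item~(\ref{3.}) and of item~(3) (oddness in $k$, plus the even-even trace property for the commutation $\tr^\Delta(aX)=\tr^\Delta(Xa)$) matches the paper.

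There is, however, one concrete slip in your treatment of item~(\ref{4.}). With $u=z^n$, $v=z^m$, the diagonal coefficient of $XY^*$ at $z^k$ is (up to a sign) $(m-k)^2 = m^2 - 2mk + k^2$, so in addition to the $k^2$-term (handled by $\zeta(-2)=0$) and the $k$-term (handled by oddness), there is a $k$-independent term $m^2$. Your sentence ``again one checks the $z^0$ term contributes nothing'' is false as stated: under the convention $0^{-s}=1$ the $k=0$ term contributes exactly $\pm m^2$, and this must cancel against $\pm m^2\cdot 2\zeta(0)=\mp m^2$ coming from $\sum_{k\neq 0}|k|^{-2s}$. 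This is precisely the same $1+2\zeta(0)=0$ mechanism you identified for item~(\ref{3.}); the paper makes this explicit by writing $\tr^\Delta(XY^*) = -2\zeta(-2)(u,v)_{L^2} + (-1-2\zeta(0))(u,v)_{H^1_0}$, where both brackets vanish. So your closing diagnosis (that the zero-mode bookkeeping is the whole point) is right, but you need to actually carry it through the constant-in-$k$ part of $XY^*$ rather than asserting the $z^0$ contribution is absent.
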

 \begin{proof}
 	We have here even-even class operators, so that the renormalized trace is commuting in all items. 
 	\begin{enumerate}
 		\item Following computations present e.g. in \cite{CDMP},
 			we compute for $Re(s)$ large enough first, with the convention $0^{-s}=1:$ 
 		 \begin{eqnarray*}
 			\sum_{k \in \mathbb{Z}}((z^n z^{-m} z^k),\Delta^{-s} z^k)_{L^2}	& = &\sum_{k \in \mathbb{Z}} (k^2)^{-s} (z^n z^{-m} z^k, z^k)_{L^2} \\ & = &  \sum_{k \in \mathbb{Z}} (k^2)^{-s} (z^{n-m+k}, z^k)_{L^2} \\
 				& = & \left\{ \begin{array}{cl} 0 & \hbox{ if } n \neq m \\
 				 1 + 2\sum_{k \in \mathbb{N}^*} (k^2)^{-s} & \hbox{ if } n=m \end{array} \right.
 				\end{eqnarray*}
 			By the way, taking the limit as $s \rightarrow 0$ in the zeta-renormalization procedure of the trace, we get, dividing the sum for $k \in \Z$ into three parts: $k \in \Z_-^*,$ $k=0$ and $k \in \N^*$: $$\tr^{\Delta}(a\bar{b}) = \left( \zeta(0) + 1 + \zeta(0) \right) (a,b)_{L^2} = 0 (a,b)_{L^2}.$$
 		\item Here the functions $u$ and $v$ are real-valued, which means that one should consider the real Fourier basis for the summation. However, since the real Fourier basis is a linear combination of the complex one, we investigate first the renormalized trace with $u = z^n$ and $v = z^m.$ Then $XY^*(z^k) = (-(m-k)^2)z^{n-m+k}.$
 		Then we adapt the previous computations:
 		\begin{eqnarray*}
 			\sum_{k \in \mathbb{Z}} (k^2)^{-s} (XY^* z^k, z^k)_{L^2} & = &  \sum_{k \in \mathbb{Z}} (-(m-k)^2)(k^2)^{-s} (z^{n-m+k}, z^k)_{L^2} \\
 			& = & \left\{ \begin{array}{cl} 0 & \hbox{ if } n \neq m \\
 				-m^2-2\sum_{k \in \mathbb{N}^*}  k^{-2s + 2} + m^2k^{-2s} & \hbox{ if } n=m \end{array} \right.
 		\end{eqnarray*}
 		.
 		By the way, passing from the complex Fourier Basis to the real Fourier basis, $$\tr^\Delta(XY^*) = -2\zeta(-2) (u,v)_{L^2} + (-1 - 2\zeta(0))(u,v)_{H^1_0} = 0.$$
 		\item Since $a$ and $X$ are even-even class, we thave that $\tr^\Delta(aX)=\tr^\Delta(Xa).$ Setting $a = z^n$ and $X = u \frac{d}{dx},$ with $u = z^m,$ we compute 
 		\begin{eqnarray*}
 			\sum_{k \in \mathbb{Z}} ik (k^2)^{-s} (z^n z^{m} z^k, z^k)_{L^2} & = &  \sum_{k \in \mathbb{Z}} ik.k^{-2s} (z^{n+m+k}, z^k)_{L^2} \\
 			& = & \left\{ \begin{array}{cl} 0 & \hbox{ if } n \neq -m \\
 				\sum_{k \in \mathbb{Z}} ik. k^{-2s} = 0 & \hbox{ if } n=-m \end{array} \right.
 		\end{eqnarray*}
 	By the way, $$\tr^\Delta(aX) = \tr^\Delta(Xa)=0.$$
\end{enumerate}
 \end{proof}
\subsection{Extension of the Hilbart-Schmidt metric  to $FCl.$}

\begin{Theorem}
	The Hilbert-Schmidt definite positive Hermitian product $$ \left( A,B\right)_{HS} = \tr\left(AB^*\right)$$
	which is positive, definite metric on $Cl^{-1}(S^1,V)$ extends:
	\begin{itemize}
		\item to a Hermitian, non degenerate form on $Cl(S^1,V)$ by $(A,B) \mapsto (A,B)_{\Delta}=\tr^\Delta(AB^*)$
		\item to a Hermitian, non degenerate form on  $Cl_{ee}(S^1,V)$ by $(A,B) \mapsto (A,B)_{\Delta}=\tr^\Delta(AB^*)$
		\item to a $(\R-)$ bilinear, symmetric non degenerate form on $Cl(S^1,V) \oplus Vect(S^1) $ by $(A,B) \mapsto \mathfrak{Re}(A,B)_{\Delta} =\mathfrak{Re}\left(\tr^\Delta(AB^*)\right)$ where $A = a+u,$ $B = b + v,$ with $(a,b) \in Cl(S^1,V)$ and $(u,v) \in Vect(S^1).$ 
	\end{itemize}
	\end{Theorem}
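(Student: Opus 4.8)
The plan is to establish the three claims in order, since the second is a restriction of the first and the third is a consequence of the first. The key inputs are Proposition \ref{p6} (in particular $\tr^Q A = \overline{\tr^{Q^*} A^*}$, which for $Q = \Delta$ self-adjoint gives $\tr^\Delta A = \overline{\tr^\Delta A^*}$), the bilinearity and smoothness of $\tr^\Delta$, and the computations of Lemma \ref{calcultr}. The only substantive point is \emph{non-degeneracy}; sesquilinearity and the Hermitian symmetry property $(A,B)_\Delta = \overline{(B,A)_\Delta}$ are immediate: indeed $(B,A)_\Delta = \tr^\Delta(BA^*) = \overline{\tr^\Delta((BA^*)^*)} = \overline{\tr^\Delta(AB^*)} = \overline{(A,B)_\Delta}$.

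For non-degeneracy on $Cl(S^1,V)$, I would argue as follows. Suppose $A \in Cl(S^1,V)$ satisfies $\tr^\Delta(AB^*) = 0$ for all $B \in Cl(S^1,V)$. The strategy is to test against a well-chosen family of $B$'s for which $\tr^\Delta(AB^*)$ reduces to an honest (convergent) trace, so that the classical non-degeneracy of the Hilbert-Schmidt product can be invoked. Concretely, pick $B$ of negative order low enough that $AB^*$ is trace class; then $\tr^\Delta(AB^*) = \tr(AB^*)$ by the first sentence after Definition \ref{d6}. Taking $B$ to range over $Cl^{-\infty}(S^1,V)$ — or just over rank-one operators $z^j \mapsto \langle \cdot, z^k\rangle z^j$ built from the Fourier basis — forces all matrix coefficients $\langle A z^k, z^j\rangle$ to vanish, hence $A = 0$. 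This shows the form is non-degenerate. For the restriction to $Cl_{ee}(S^1,V)$, one checks that the separating family of $B$'s can be chosen inside $Cl_{ee}(S^1,V)$: smoothing operators are even-even (their formal symbol is zero), and finite-rank operators assembled from the $z^j$'s are smoothing, so the same argument applies verbatim.

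For the third bullet, write $A = a + u$, $B = b + v$ with $a,b \in Cl(S^1,V)$, $u,v \in Vect(S^1) \subset Cl^1(S^1,V)$, and expand
\begin{equation}
\mathfrak{Re}(A,B)_\Delta = \mathfrak{Re}\,\tr^\Delta\big((a+u)(b+v)^*\big) = \mathfrak{Re}\,\tr^\Delta(ab^*) + \mathfrak{Re}\,\tr^\Delta(av^*) + \mathfrak{Re}\,\tr^\Delta(ub^*) + \mathfrak{Re}\,\tr^\Delta(uv^*).
\end{equation}
$\R$-bilinearity and symmetry of $\mathfrak{Re}(\cdot,\cdot)_\Delta$ follow from the $\C$-sesquilinearity and Hermitian symmetry already established, since $\mathfrak{Re}$ of a sesquilinear Hermitian form is a real symmetric bilinear form. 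For non-degeneracy, suppose $\mathfrak{Re}(A,B)_\Delta = 0$ for all $B$. Testing against $B = ib$ for $b \in Cl(S^1,V)$ turns $\mathfrak{Re}$ into $\mathfrak{Im}$, so in fact $(A,B)_\Delta = 0$ for all $B \in Cl(S^1,V)$, i.e. $\tr^\Delta\big((a+u)b^*\big) = 0$ for all $b$. By the computations of Lemma \ref{calcultr}(3) (and the analogous fact $\tr^\Delta(u b^*) = 0$ for $u$ a vector field and $b$ a multiplication operator, extended to general $b$ by the order-filtration reasoning, the operator $u b^*$ being even-even) the vector-field contribution is killed on a large enough test set, leaving $\tr^\Delta(ab^*) = 0$ for all $b$, hence $a = 0$ by the first bullet; then testing against $B = v \in Vect(S^1)$ and using Lemma \ref{calcultr}(2) forces $u = 0$ as well.

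The main obstacle is the second half of the last step: one must verify that the off-diagonal terms $\tr^\Delta(uv^*)$, $\tr^\Delta(av^*)$, $\tr^\Delta(ub^*)$ genuinely decouple, i.e. that the vector-field directions are not ``hidden'' inside the pseudodifferential directions with respect to the form. Lemma \ref{calcultr} handles the basic Fourier-monomial cases, but promoting those to a clean separating argument over all of $Cl(S^1,V) \oplus Vect(S^1)$ requires care: one needs enough test elements of $Vect(S^1)$ (which is only infinite-dimensional over $\R$ and sits inside $Cl^1$, an unbounded piece) to detect a nonzero $u$, and one must make sure the smoothing/finite-rank test operators used to kill $a$ do not simultaneously interact with $u$ in an uncontrolled way. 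I expect this to be dispatched by the same diagonal-matrix-coefficient bookkeeping as in Lemma \ref{calcultr}, computing $\tr^\Delta$ via the zeta-regularized sum $\sum_k (k^2)^{-s}\langle \,\cdot\, z^k, z^k\rangle$ and reading off homogeneity, but it is where the real work lies.
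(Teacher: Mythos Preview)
Your treatment of the first two bullets is correct and matches the paper's strategy: both establish non-degeneracy by testing $A$ against smoothing (rank-one) operators $B$, so that $\tr^\Delta(AB^*)$ becomes an honest trace and reads off matrix coefficients of $A$. The paper picks the single test element $B = Ap_x$, where $p_x$ is the rank-one projection onto a unit vector $x$ with $Ax \neq 0$, and computes $\tr^\Delta\big(A(Ap_x)^*\big) = \|Ax\|_{L^2}^2$ by manipulating $\tr\big(A(Ap_x)^*\Delta^{-s}\big)$ for large $\mathfrak{Re}(s)$ and letting $s\to 0$; you instead test against the full Fourier rank-one family and kill $A$ coefficient by coefficient. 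Either works, and your remark that smoothing operators are even-even disposes of the second bullet exactly as the paper does.

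For the third bullet you take a detour that is both unnecessary and contains an error. The paper's argument is one line: since $Vect(S^1) \subset Cl(S^1,V)$ one has $Cl(S^1,V) + Vect(S^1) = Cl(S^1,V)$, so $A = a+u$ is simply an element of $Cl(S^1,V)$; your own $B \mapsto iB$ trick then upgrades $\mathfrak{Re}(A,B)_\Delta = 0$ for all $B$ to $(A,b)_\Delta = 0$ for all $b \in Cl(S^1,V)$, and the first bullet finishes. There is no need to separate $a$ from $u$, and your attempt to do so via Lemma~\ref{calcultr} is exactly where your admitted gap sits. Worse, the last step --- ``testing against $B = v \in Vect(S^1)$ and using Lemma~\ref{calcultr}(2) forces $u = 0$'' --- is backwards: that item of the lemma asserts $\tr^\Delta(XY^*) = 0$ for \emph{all} vector fields $X,Y$, so $Vect(S^1)$ is isotropic and pairing against vector fields can never detect a nonzero $u$.
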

\begin{proof}
	
	We proceed set by set, following the order of the statement of the Theorem.
	
	 \underline{On $Cl(S^1,V)$}:
		The formula $\tr\left(AB^*\right)$ gives obviously a sesquilinear form.
		We prove first that the form is Hermitian: 
		Let $(A,B)\in Cl(S^1,V). $
		$$ \tr^\Delta(BA^*) = \tr^\Delta\left((AB^*)^*\right) = \overline{\tr^\Delta\left(AB^*\right)}.$$
		 Let us then prove that it is non-degenerate. Let $A \in Cl(S^1,V),$ let $u \in C^\infty(S^1,V) \cap \left(Im A - \{0\} \right)$ which is the image of a function $x$ such that $||x||_{L^2} = 1, $ and let $p_x$ be the $L^2-$ orthogonal projection on the $\mathbb{C}-$vector space spanned by $x.$ Then, let $(e_k)_{k \in \N}$ be an orthonormal base with $e_0=x.$ For $\mathfrak{Re}(s) \geq 2ord(A) + 2,$ we observe, applying commutation relations of the usual trace of trace-class operators, the following: 
	\begin{eqnarray*} \tr\left(A \left(p_xA\right)^*\Delta^{-s}\right)& = &\tr\left(\Delta^{-s/2}A \left(Ap_x\right)^*\Delta^{-s/2}\right) \\ & =& \tr\left( \left(Ap_x\right)^*\Delta^{-s/2}.\Delta^{-s/2}A\right) \\ & = &\tr\left( \left(Ap_x\right)^*\Delta^{-s}A\right).\end{eqnarray*}
		By the way, the meromorphic continuation to $\mathbb{C}$ of $s \mapsto \tr\left( \left(Ap_x\right)^*\Delta^{-s}A\right)$ exist and coincide with the meromorphic continuation of $s \mapsto \tr\left(A \left(Ap_x\right)^*\Delta^{-s}\right)$, in particular at $s=0.$
		\begin{eqnarray*}
			\tr\left( \left(p_xA\right)^*\Delta^{-s} A\right) & = & \sum_{k \in \mathbb{N}}  (\left(p_xA\right)^*\Delta^{-s} A e_k, e_k)_{L^2} \\
			& = &  \sum_{k \in \mathbb{N}}  (\Delta^{-s} A e_k, Ap_x e_k)_{L^2}\\
			& = & (\Delta^{-s} A x, Ax)_{L^2}\\
			& = & (\Delta^{-s/2}u,\Delta^{-s/2}u)_{L^2}
			\end{eqnarray*}
		By the way, since $\lim_{s \rightarrow 0} \Delta^{-s/2} = Id$ for weak convergence,
		\begin{eqnarray*}
			\tr^\Delta \left(A \left(Ap_x\right)\right)& = & \lim_{s \rightarrow 0}\tr\left(A \left(Ap_x\right)^*\Delta^{-s}\right)\\
			& = & \lim_{s \rightarrow 0}(\Delta^{-s/2}u,\Delta^{-s/2}u)_{L^2}\\
			& = & ||u||_{L^2}^2\\
			& \neq & 0.			\end{eqnarray*}
		The operator $Ap_x$ is a smoothing (rank 1) operator, and hence is in  $Cl(S^1,V),$ which ends the proof.	
	
	 \underline{On $Cl_{ee}(S^1,V)$ and on $Cl(S^1,V) \oplus Vect(S^1) $}:
		The same arguments as before hold for non-degeneracy, both on $Cl_{ee}(S^1,V)$ and on $Cl(S^1,V) \oplus Vect(S^1) $.  The rest of the arguments follow from the inclusions $Cl_{ee}(S^1,V) \subset Cl(S^1,V)$ and $Cl(S^1,V) + Vect(S^1) = Cl(S^1,V).$ 
	
	\end{proof}
\begin{rem}
	We remark that $(.,.)_\Delta$ is bilinear, non degenerate but not positive. Indeed, from relation (\ref{3.}) of Lemma \ref{calcultr},  $C^\infty(S^1, M_n(\mathbb{C}))$ is an isotropic Lie subalgebra for $(.,.)_\Delta$ which proves that this $\R-$bilinear symmetric form is not positive. 
\end{rem}

From the Lie algebra $Cl(S^1,V) \oplus Vect(S^1), $ we then span by right-invariant action  of $FCl^{*}(S^1,V)$ on $TFCl^{*}(S^1,V)$ a right-invariant pseudo-metric. For this goal, the Lie algebra elements are identified as infinitesimal paths, and actions and Lie brackets are those derived from the coadjoint action (and right-Lie bracket) of $FCl^{*}(S^1,V)$ on $Cl(S^1,V) \oplus Vect(S^1), $ while we consider the trivial mapping defined by the sum $Cl(S^1,V) \oplus Vect(S^1) \rightarrow  Cl(S^1,V)  = Cl(S^1,V) + Vect(S^1) $ in order to compute $\mathfrak{Re}(.;.)_\Delta.$ The same constructions hold for the pseudo-Hermitian metric $(.;.)_\Delta$ on $Cl^*(S^1,V).$

\begin{Definition}
	Let $A \in FCl^{0,*}(S^1,V)$ and let $a \in Cl^0(S^1,V) \oplus Vect(S^1). $ We note by $R_A(a)$ the (right-)action by composition $$R_A(a) = a \circ A.$$
	Then, identifying $T_A FCl^{0,*}(S^1,V)$ with $R_A\left(Cl^0(S^1,V) \oplus Vect(S^1)\right)$ we set a smooth pseudo-Riemannain metric on $T FCl^{0,*}(S^1,V)$ by defining for $$(a,b) \in \left(Cl^0(S^1,V) \oplus Vect(S^1)\right)^2,$$ and hence for   $(R_A(a),R_A(b))  \in \left(T_A FCl^{0,*}(S^1,V)\right)^2,$ 
	$$ (R_A(a),R_A(b))_{\Delta, A} = (a,b)_\Delta.$$
\end{Definition}


\section{In search of pseudo-Hermitian connections for $(.,.)_{\Delta}$} \label{s:conn}
There exists some difficulties in describing the whole space of connection 1-forms $\Omega^1(FCl_{Diff(S^1)}(S^1,V),Cl(S^1,V)\rtimes Vect(S^1)).$ Indeed the space of smooth linear maps acting on $Cl(S^1,V)$ is actually not well-understood to our knowledge. In particular, finding an adjoint of the adjoint map for $(.,.)_\Delta$ fails apparently due to the non-traciality of $\tr^\Delta.$ We consider here a class of connections where this smooth linear endomorphism is defined by composition by a smoothing operator. The resulting technical simplifications enables us to get pseudo-Hermitian connections for $(.,.)_{\Delta}.$ Most of them can be easily adapted to get pseudo-Riemannian connections for $\mathfrak{Re}(.,.)_{\Delta}.$  
\subsection{A class of connections}
 Let us define now, for $w \in Cl(S^1,V)$ such that $\forall (a,b)\in Cl(S^1,V),$ 
$$\Theta^w_ab = b[a,w].$$
-

\begin{Proposition}
	The curvature of $\Theta^w$ reads as $$\Omega_{\Theta^w}(a,b)c =[wbw,a] - [waw,b] + [wa,wb]  - [wb,aw] -[[b,a]w] $$ for $(a,b) \in \F Cl(S^1,V).$
\end{Proposition}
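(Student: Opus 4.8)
The statement is a curvature computation for the connection 1-form $\Theta^w$ on $\mathcal{F}Cl^*_{Diff(S^1)}(S^1,V)$ given by $\Theta^w_a b = b[a,w]$. The plan is to use the standard structure equation for the curvature of a right-invariant connection in terms of its connection 1-form on the Lie algebra, namely $\Omega_{\Theta^w}(a,b) = d\Theta^w(a,b) + [\Theta^w_a, \Theta^w_b]_{\ast}$, where the bracket term and the differential must be interpreted with the appropriate conventions for right-invariant objects (so that $d\Theta^w(a,b)$ involves the Lie-algebra bracket $[a,b]$ acting through $\Theta^w$). Concretely, applied to a vector $c \in \mathcal{F}Cl(S^1,V)$, one expects
\begin{equation*}
\Omega_{\Theta^w}(a,b)c = \Theta^w_a(\Theta^w_b c) - \Theta^w_b(\Theta^w_a c) - \Theta^w_{[a,b]}c + (\text{terms from the action of } a,b \text{ on the argument}).
\end{equation*}

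First I would write out each piece. The composition $\Theta^w_a(\Theta^w_b c) = \Theta^w_a(c[b,w]) = c[b,w][a,w]$, and symmetrically $\Theta^w_b(\Theta^w_a c) = c[a,w][b,w]$, so their difference contributes $c\big([b,w][a,w] - [a,w][b,w]\big) = c[[b,w],[a,w]]$. The term $-\Theta^w_{[a,b]}c = -c[[a,b],w]$. Then there are the "transport" contributions coming from the fact that on the group the tangent vectors are moved by right translation, which produce the terms where $a$ and $b$ act on $c$ by the Lie bracket of $\mathcal{F}Cl(S^1,V)$ and are then fed into $\Theta^w$; these yield expressions of the form $[c,a][b,w]$-type terms (with signs) after using $\Theta^w_b([c,a]) = [c,a][b,w]$ etc. Expanding $[[b,w],[a,w]]$ as $bwaw - bwwa - wbaw + wbwa - (awbw - awwb - waBw + wawb)$-style monomials — more carefully, $[[b,w],[a,w]] = [b,w][a,w]-[a,w][b,w]$ and each $[x,w]=xw-wx$ — and regrouping, I would match the resulting monomials against the claimed answer $[wbw,a] - [waw,b] + [wa,wb] - [wb,aw] - [[b,a]w]$ (where the last term, read as $-[[b,a],w]$ composed on $c$, i.e. $+c[[a,b],w]$, should cancel or combine with $-\Theta^w_{[a,b]}c$). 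The bulk of the work is disciplined bookkeeping: expand everything into words in $a,b,w$ (with $c$ as a left factor or with the bracket-with-$c$ understood), collect, and recognize the commutators.

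The main obstacle I expect is getting the \emph{conventions} exactly right: which sign and which bracket ($[a,b]$ in $Vect(S^1)\ltimes$ sense vs.\ operator commutator) enter the structure equation for a \emph{right}-invariant connection on a group that is only a Frölicher Lie group, and whether the "$c$" in the statement is a genuine third argument (so $\Omega_{\Theta^w}$ is valued in endomorphisms, matching $\Theta^w$ being an endomorphism-valued 1-form) or is being transported as well. Once the convention is pinned down — I would fix it by demanding consistency with the flat case $w=0$ (where $\Theta^0=0$ and the curvature must vanish, which it does) and with the Jacobi identity — the identity should fall out. I would also double-check the slightly unusual term $[wa,wb]-[wb,aw]$ by expanding: $[wa,wb] = wawb - wbwa$ and $[wb,aw] = wbaw - awwb$, so $[wa,wb]-[wb,aw] = wawb - wbwa - wbaw + awwb$; together with $[wbw,a] = wbwa - awbw$ and $-[waw,b] = -wawb + bwaw$ one sees cancellation of $wbwa$ and $wawb$, leaving $-awbw + bwaw - wbaw + awwb$, which should reproduce exactly the cross-terms of $[[b,w],[a,w]]$ minus the diagonal $bwaw,awbw$ pieces — this consistency check is what I would use to certify the final regrouping. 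The remaining step is purely to confirm that the $[[b,a],w]$ term is precisely $-\Theta^w_{[b,a]}$ evaluated on $c$, closing the computation.
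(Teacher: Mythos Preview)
Your main deviation from the paper is the introduction of ``transport'' terms of the form $[c,a][b,w]$. The paper's proof is a three-line computation that uses \emph{only}
\[
\Omega_{\Theta^w}(a,b)c \;=\; [\Theta^w_a,\Theta^w_b]\,c \;-\; \Theta^w_{[a,b]}c,
\]
i.e.\ the commutator of the two endomorphisms $\Theta^w_a,\Theta^w_b$ applied to $c$, minus $\Theta^w$ evaluated on the Lie bracket. There are no further contributions. The point is that $\Theta^w$ is being treated as a right-invariant \emph{endomorphism-valued} $1$-form (each $\Theta^w_a$ is the endomorphism $b\mapsto b[a,w]$ of $Cl(S^1,V)$), not as an affine connection on the tangent bundle; for such a form the Maurer--Cartan structure equation on a Lie group collapses to exactly the two terms above, with $d\Theta^w(a,b)=-\Theta^w_{[a,b]}$. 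Your extra $[c,a][b,w]$-type terms would only enter if $c$ were itself a right-invariant vector field being covariantly differentiated, which is not the situation here: $c$ is just a vector in the representation space on which the curvature endomorphism acts. A quick sanity check confirms this: every term in the claimed formula has $c$ as a pure left factor, $c\cdot(\text{word in }a,b,w)$, which your bracket-with-$c$ terms could never produce.

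Once you drop those terms, your plan coincides with the paper's: compute $\Theta^w_a\Theta^w_b c=c[b,w][a,w]$, subtract the term with $a,b$ swapped, subtract $c[[a,b],w]$ (written in the paper as $-c[[b,a],w]$, reflecting the right-bracket convention you correctly flagged), and then regroup monomials into the displayed commutators. Your expansion of $[wbw,a]-[waw,b]+[wa,wb]-[wb,aw]$ down to $-awbw+bwaw-wbaw+aw\,wb$ is exactly the kind of bookkeeping the paper leaves implicit in passing from its second to its third line; your instinct to verify this step carefully against the eight monomials of $[b,w][a,w]-[a,w][b,w]$ is well placed, since the regrouping is the entire content of the proof.
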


\begin{proof}
	Let $(a,b,c)\in Cl(S^1,V).$ 
\begin{eqnarray*}
	\Omega_{\Theta^w}(a,b)c & = & \left[\Theta^w_a,\Theta^w_b\right]c - \Theta^w_{[a,b]}c \\
	& = & c[b,w][a,w] - c[a,w][b,w] -c [[b,a],w] \\
	& = & c[wbw,a] - c[waw,b] + c[wa,wb]  - c[wb,aw] -c[[b,a]w]
	\end{eqnarray*}
	\end{proof}


Let us analyze 
the connection $\Theta^w$ with $w = i\epsilon(D).$   
\begin{Theorem}
	$\Theta^{i\epsilon(D)}$ is a $Cl^{-\infty}(S^1,V)-$valued connection. 
\end{Theorem}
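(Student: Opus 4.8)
The claim is that $\Theta^{i\epsilon(D)}$, i.e.\ the connection $1$-form $a \mapsto \Theta^{i\epsilon(D)}_a$ with $\Theta^{i\epsilon(D)}_a b = b[a, i\epsilon(D)]$, actually takes values in the \emph{smoothing} operators $Cl^{-\infty}(S^1,V)$ rather than merely in $Cl(S^1,V)$. The whole point is that this is an essentially immediate consequence of Theorem~\ref{th1}: for any $A \in Cl(S^1,\mathbb{C}^k)$ one has $[A,\epsilon(D)] \in Cl^{-\infty}(S^1,\mathbb{C}^k)$, and this passes verbatim to the matrix-valued (bundle) setting $Cl(S^1,V)$, since $\epsilon(D)$ acts diagonally on $V$. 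So the plan is: first record that $[a, i\epsilon(D)] = i[a,\epsilon(D)] \in Cl^{-\infty}(S^1,V)$ by Theorem~\ref{th1}; then observe that $Cl^{-\infty}(S^1,V)$ is a two-sided ideal of $Cl(S^1,V)$ (this is stated explicitly in the Preliminaries, right before the definition of $\mathcal{F}Cl$), so that for any $b \in Cl(S^1,V)$ the product $b[a,i\epsilon(D)]$ is again smoothing; hence $\Theta^{i\epsilon(D)}_a b \in Cl^{-\infty}(S^1,V)$ for all $a,b$, which is exactly the assertion.

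I would then add one sentence checking that $\Theta^{i\epsilon(D)}$ is indeed a connection $1$-form of the expected type --- that is, that $a \mapsto \Theta^{i\epsilon(D)}_a$ is $\mathbb{C}$-linear (clear, since $a \mapsto [a, i\epsilon(D)]$ is linear) and smooth in the relevant (diffeological / Fr\"olicher) sense, which follows because composition of pseudodifferential operators and the bracket with the fixed operator $i\epsilon(D)$ are smooth operations on $Cl(S^1,V)$. One should also note that $i\epsilon(D)$ is a legitimate choice here: $\epsilon(D)$ is an even-odd class operator (as recalled in the Example following the odd-even splitting), in particular it lies in $Cl(S^1,V)$, so $w = i\epsilon(D) \in Cl(S^1,V)$ and the construction of $\Theta^w$ applies.

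The only genuine subtlety --- and the step I would be most careful about --- is making sure the hypotheses of Theorem~\ref{th1} really cover the operator $a \in Cl(S^1,V)$ with \emph{no restriction on order}: the statement of Theorem~\ref{th1} is phrased for $A \in Cl(S^1,\mathbb{C}^k)$ with no order hypothesis, so $[a,\epsilon(D)]$ is smoothing even for unbounded $a$; this is why the conclusion holds on all of $FCl_{Diff(S^1)}(S^1,V)$ and not just on the bounded part. I would invoke this directly rather than re-deriving it from the local symbol computation (the symbol of $\epsilon(D)$ being $\xi/|\xi|$, which is locally constant in $\xi$, forces the symbol of the commutator to vanish identically, hence the commutator is smoothing). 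With that in hand the proof is a two-line citation argument, and I would present it as such.
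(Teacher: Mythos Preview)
Your proposal is correct and takes essentially the same approach as the paper: the paper's entire proof is the single sentence ``It follows directly from the fact that $[a,i\epsilon(D)] \in Cl^{-\infty}(S^1,V)$,'' which is exactly the core step you identify via Theorem~\ref{th1}. Your version is more detailed (explicitly invoking the ideal property of $Cl^{-\infty}$ and commenting on smoothness and linearity), but the argument is the same.
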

\begin{proof}
	It follows directly from the fact that $[a,i\epsilon(D)] \in Cl^{-\infty}(S^1,V).$
	\end{proof}
\subsection{Pseudo-Hermitian connections associated with a skew-adjoint pseudodifferential operator}

Let $w \in Cl(S^1,V)$ such that $w^* = -w.$ For example, one can consider the example $w = i\epsilon(D).$
\begin{Lemma}
	$\forall a \in Cl(S^1,V),$
	$\forall w \in Cl(S^1,V)$ such that $w^*=-w, $ $\Theta^w_{a^*}$ is the adjoint of $\Theta^s_a$ for $(.,.)_\Delta$
\end{Lemma}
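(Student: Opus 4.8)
The claim is that for a skew-adjoint $w \in Cl(S^1,V)$, the operator $\Theta^w_{a^*}$ is the adjoint of $\Theta^w_a$ with respect to the pseudo-Hermitian form $(\cdot,\cdot)_\Delta$; that is, for all $b,c \in Cl(S^1,V)$ we must verify the identity $(\Theta^w_a b, c)_\Delta = (b, \Theta^w_{a^*} c)_\Delta$. Unwinding the definitions, the left side is $\tr^\Delta\bigl(b[a,w]\,c^*\bigr)$ and the right side is $\tr^\Delta\bigl(b\,(c[a^*,w])^*\bigr) = \tr^\Delta\bigl(b\,[a^*,w]^*\,c^*\bigr)$. So the whole statement reduces to showing $[a,w] = [a^*,w]^*$ as operators (or at least modulo what $\tr^\Delta$ does not see). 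The plan is: first reduce to this bracket identity, then establish it using $w^*=-w$.

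First I would compute $[a^*,w]^* = (a^*w - wa^*)^* = w^* a - a w^* = -(w a - a w) = -[w,a] = [a,w]$, where the second equality uses $(xy)^* = y^*x^*$ and the penultimate uses $w^* = -w$. This is the key algebraic step and it is completely elementary — no renormalized-trace subtleties enter here. Substituting back, $(b,\Theta^w_{a^*}c)_\Delta = \tr^\Delta\bigl(b\,[a,w]\,c^*\bigr) = (\Theta^w_a b, c)_\Delta$, which is exactly the adjointness relation. I would present it in a short \texttt{eqnarray*} chaining these three equalities.

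Two small points of care remain. One should check that the relevant operators actually lie in a class where $(\cdot,\cdot)_\Delta$ is defined and Hermitian — but since $b, [a,w], c^*$ are all classical pseudodifferential operators, $b[a,w]c^* \in Cl(S^1,V)$ and $\tr^\Delta$ applies; the Hermitian property $\tr^\Delta(X^*) = \overline{\tr^\Delta X}$ (needed implicitly in interpreting ``adjoint for $(\cdot,\cdot)_\Delta$'') is the last bullet of Proposition \ref{p6} with $Q = Q^* = \Delta$. The other is the typo in the statement: $\Theta^s_a$ should read $\Theta^w_a$, and I would simply write the proof with $\Theta^w_a$ throughout.

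\textbf{Main obstacle.} Honestly there is no real obstacle: the non-traciality of $\tr^\Delta$, which blocks the general search for adjoints discussed at the start of the section, is precisely sidestepped here because $\Theta^w$ acts by \emph{right} multiplication ($b \mapsto b[a,w]$) rather than by a commutator, so the defining identity never requires moving anything past $\tr^\Delta$ — it is a pointwise identity of operators inside a single $\tr^\Delta$. The only thing to get right is the bookkeeping of adjoints and the sign coming from $w^* = -w$, and to make sure the inputs $b, c$ range over $Cl(S^1,V)$ so that all compositions stay classical.
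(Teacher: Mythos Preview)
Your proof is correct and follows exactly the same route as the paper: unwind $(\Theta^w_a b,c)_\Delta = \tr^\Delta(b[a,w]c^*)$, use the pointwise identity $[a^*,w]^* = [a,w]$ coming from $w^*=-w$, and recognize the result as $\tr^\Delta\bigl(b(\Theta^w_{a^*}c)^*\bigr)$. Your observation that non-traciality is irrelevant here because the argument is a pointwise operator identity inside a single $\tr^\Delta$ is exactly right, and you correctly caught the typo $\Theta^s_a \to \Theta^w_a$; incidentally the paper's own proof writes $(a,b,c)\in Cl^{-\infty}(S^1,V)^3$, which is an apparent slip --- your version with $b,c\in Cl(S^1,V)$ is what the statement actually asserts.
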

\begin{proof}
	Let $(a,b,c)\in Cl^{-\infty}(S^1,V)^3$
	\begin{eqnarray*}
		\left(\Theta^w_a b, c)\right)_\Delta & = & \tr^\Delta\left( b[a,w]c^* \right)\\
		& = & \tr^\Delta\left( b([a^*,w])^*c^* \right)\\
		& = & \tr^\Delta\left( b(c[a^*,w])^* \right)\\
		& = & \tr^\Delta\left( b(\Theta_{a^*}c)^* \right)
		\end{eqnarray*}
	\end{proof}
Let us now analyze

$$ (a,b)\in Cl(S^1,\C)^2 \mapsto \theta^{w}_ab=b[a-a^*,w] = (\Theta^w_a-\Theta^w_{a^*})(b).$$
\begin{Theorem}
	$\theta^{w}$ is the connection 1-form of a pseudo-Hermitian connection of $(.,.)_\Delta.$
\end{Theorem}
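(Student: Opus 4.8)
The plan is to verify two things: that $\theta^w$ is a bona fide connection $1$-form (i.e. it has the right equivariance/affine structure), and that the induced covariant derivative is compatible with the pseudo-Hermitian form $(.,.)_\Delta$. For the first point, observe that $\theta^w_a$ is linear in $a$ and that $a\mapsto a-a^*$ is $\R$-linear, so $\theta^w_a b = b[a-a^*,w]$ defines an $\R$-linear endomorphism of $Cl(S^1,V)$ for each $a$; since $\Theta^w$ is already a connection $1$-form (in the sense used earlier in the paper for the class of connections $\Theta^w$), and $\theta^w = \Theta^w\circ(\mathrm{Id}-{}^*)$ is obtained from it by precomposition with the fixed $\R$-linear map $a\mapsto a-a^*$, the resulting object is again a connection $1$-form on the trivial-type bundle under consideration. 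So the substantive content is the compatibility identity.

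For compatibility I would argue as follows. The metric $(.,.)_\Delta$ is right-invariant by construction, so it suffices to check the infinitesimal compatibility at the identity: for all $a,b,c$ in the model Lie algebra,
\[
\mathfrak{Re}\,(\theta^w_a b,c)_\Delta + \mathfrak{Re}\,(b,\theta^w_a c)_\Delta = 0,
\]
i.e. the operator $\theta^w_a$ is skew-adjoint for $(.,.)_\Delta$ for every $a$. Now by the Lemma just proved, for $w^*=-w$ the map $\Theta^w_{a^*}$ is the adjoint of $\Theta^w_a$ with respect to $(.,.)_\Delta$. Hence
\[
(\theta^w_a)^* = (\Theta^w_a - \Theta^w_{a^*})^* = (\Theta^w_a)^* - (\Theta^w_{a^*})^* = \Theta^w_{a^*} - \Theta^w_a = -\,\theta^w_a,
\]
which is exactly skew-adjointness. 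Combined with right-invariance of the metric this yields $Z\,(X,Y)_\Delta = (\nabla_Z X, Y)_\Delta + (X,\nabla_Z Y)_\Delta$ for the right-invariant connection $\nabla$ whose connection $1$-form is $\theta^w$, which is the definition of a pseudo-Hermitian connection.

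The main obstacle is really a bookkeeping one: one must make sure the adjointness Lemma, which is stated and proved for $a,b,c\in Cl^{-\infty}(S^1,V)$, genuinely delivers the operator identity $(\Theta^w_{a^*}) = (\Theta^w_a)^*$ on all of $Cl(S^1,V)$ where the metric $(.,.)_\Delta$ lives — this is fine because $\Theta^w_a b = b[a,w]$ makes sense for arbitrary $a,b\in Cl(S^1,V)$ and the computation $\tr^\Delta(b[a,w]c^*) = \tr^\Delta(b(c[a^*,w])^*)$ uses only the algebraic identity $[a,w]^* = [w^*,a^*] = -[w,a^*] = [a^*,w]$ (valid since $w^*=-w$, and $[w,a^*]=-[a^*,w]$) together with the conjugation rule $\tr^\Delta(X)=\overline{\tr^{\Delta^*}X^*}=\overline{\tr^\Delta X^*}$ from Proposition \ref{p6} (here $\Delta^*=\Delta$), so no trace-class restriction is actually needed. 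A secondary point to state cleanly is why right-invariance of $(.,.)_\Delta$ plus skew-adjointness of the connection form at the identity suffices: this is the standard fact that for a right-invariant metric and a right-invariant connection on a Lie group, metric compatibility is equivalent to the connection $1$-form taking values in the skew-adjoint endomorphisms, which one records once and for all.

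\begin{proof}
Since $(.,.)_\Delta$ and the connection generated by $\theta^w$ are both right-invariant, it is enough to verify infinitesimal compatibility at the identity, namely that $\theta^w_a$ is skew-adjoint with respect to $(.,.)_\Delta$ for every $a \in Cl(S^1,V)$. Because $w^* = -w$, for any $a \in Cl(S^1,V)$ we have $[a,w]^* = w^* a^* - a^* w^* = -\,w a^* + a^* w = [a^*,w]$, and consequently, for $b,c \in Cl(S^1,V)$,
\begin{eqnarray*}
	(\Theta^w_a b, c)_\Delta & = & \tr^\Delta\left( b[a,w] c^* \right) \\
	& = & \tr^\Delta\left( b\,[a^*,w]^*\, c^* \right) \\
	& = & \tr^\Delta\left( b\,(c\,[a^*,w])^* \right) \\
	& = & \tr^\Delta\left( b\,(\Theta^w_{a^*} c)^* \right) \\
	& = & (b, \Theta^w_{a^*} c)_\Delta,
\end{eqnarray*}
so that $(\Theta^w_a)^* = \Theta^w_{a^*}$ for $(.,.)_\Delta$, extending the Lemma above to all of $Cl(S^1,V)$. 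Therefore
$$(\theta^w_a)^* = (\Theta^w_a - \Theta^w_{a^*})^* = \Theta^w_{a^*} - \Theta^w_a = -\,\theta^w_a,$$
i.e. $\theta^w_a$ is skew-adjoint for $(.,.)_\Delta$. By right-invariance of the metric this gives, for the right-invariant connection $\nabla$ with connection $1$-form $\theta^w$ and any right-invariant vector fields $X,Y,Z$,
$$Z\,(X,Y)_\Delta = (\nabla_Z X, Y)_\Delta + (X, \nabla_Z Y)_\Delta,$$
which is exactly the statement that $\nabla$ is a pseudo-Hermitian connection for $(.,.)_\Delta$.
\end{proof}
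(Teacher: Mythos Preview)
Your proof is correct and follows essentially the same route as the paper: both use the preceding Lemma to get $(\Theta^w_a)^* = \Theta^w_{a^*}$ and then conclude that $\theta^w_a = \Theta^w_a - \Theta^w_{a^*}$ is skew-adjoint for $(.,.)_\Delta$. Your version is in fact slightly more careful, since you explicitly extend the Lemma from $Cl^{-\infty}(S^1,V)$ to all of $Cl(S^1,V)$ and spell out why skew-adjointness at the identity plus right-invariance gives metric compatibility globally; the paper simply applies the Lemma and records the identity $(\theta^w_a b,c)_\Delta + (b,\theta^w_{a^*}c)_\Delta = 0$.
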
 
\begin{proof}
	Let $ (a,b,c)\in Cl(S^1,\C)^3.$
	
	\begin{eqnarray*}
		\left(\theta^{w}_ab,c\right)_\Delta & = & \left((\Theta^{w}_a - \Theta^w_{a^*})b,c\right)_\Delta\\
		& = & \left((\Theta^{w}_a ,c\right)_\Delta- \left((\Theta^w_{a^*})b,c\right)_\Delta\\
		& = & \left(( b,\Theta^w_{a^*}c\right)_\Delta- \left(b,\Theta^{w}_ac\right)_\Delta\\
		& = & - \left(b,\theta^{w}_{a^*}c\right)_\Delta
	\end{eqnarray*}
	Hence
	$$\left(\theta^{w}_ab,c\right)_\Delta + \left(b,\theta^{w}_{a^*}c\right)_\Delta = 0.$$
\end{proof}

\begin{rem}
	$\theta^{i\epsilon(D)}$ is $Cl^{-\infty}(S^1,V)-$valued as $\Theta^{i\epsilon(D)}$ is.
\end{rem}

\subsection{On another class $Cl^{-\infty}(S^1,V)-$connections}
Motivated by the previous example of $Cl^{-\infty}(S^1,V),$ let us now give families of $Cl^{-\infty}(S^1,V)-$connections which a priori do not include the connections $\theta^{i\epsilon(D)}$ and $\Theta^{i\epsilon(D)}$.  Let us define now, for $s \in Cl^{-\infty}$ and $\forall (a,b)\in Cl(S^1,V),$ 
$$\Theta^{s,l}_ab = sas^*b,$$
$$\Theta^{s,r}_ab = bsas^*,$$
and 
$$\Theta^{s,[]}_ab = \left[sas^*,b\right].$$
Let us describe here their associated class of pseudo-Riemannian connections for $(.,.)_\Delta$ along the lines of the previous section.
Let $s \in Cl^{-\infty}(S^1,V)$ be a smoothing operator. Let $a,b \in  Cl(S^1,V)^2$ and let  $$\theta_a^{s,[]} b= \Theta_a^{s,[]}b - \Theta_{a^*}^{s,[]}b =\left[s(a-a^*)s^*,b\right] ,$$ 
$$\theta_a^{s,l} = \Theta_a^{s,l}b - \Theta_{a^*}^{s,l}b=s(a-a^*)s^*b,$$
and 
$$\theta_a^{s,r} = \Theta_a^{s,r}b - \Theta_{a^*}^{s,r}b= bs(a-a^*)s^*.$$
\begin{Lemma} \label{Thetas}
	$\forall a \in Cl(S^1,V),$
	$\forall s \in Cl^{-\infty}(S^1,V),$ 
	\begin{itemize}
		\item $ \Theta^{s,l}_{a^*}$ is the adjoint of $\Theta^{s,l}_a$ for $(.,.)_\Delta$
		\item $ \Theta^{s,r}_{a^*}$ is the adjoint of $\Theta^{s,l}_a$ for $(.,.)_\Delta$
		\item $ \Theta^{s,[]}_{a^*}$ is the adjoint of $\Theta^{s,l}_a$ for $(.,.)_\Delta$
	\end{itemize} 
\end{Lemma}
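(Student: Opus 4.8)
The plan is to verify each of the three adjointness claims in Lemma~\ref{Thetas} by a direct computation analogous to the one carried out for $\Theta^w$ above, relying on two facts: that $\tr^\Delta$ is \emph{cyclic} on products where one factor is smoothing (this follows from Theorem~\ref{t1}(iii), since $sas^*$ and $s(a-a^*)s^*$ are smoothing whenever $s \in Cl^{-\infty}(S^1,V)$, so commutators of such operators with anything in $Cl(S^1,V)$ have vanishing renormalized trace), and that $\tr^\Delta(X^*) = \overline{\tr^\Delta(X)}$ when $\Delta^* = \Delta$, from Proposition~\ref{p6}. First I would fix $(a,b,c) \in Cl(S^1,V)^3$ and compute $\left(\Theta^{s,l}_a b, c\right)_\Delta = \tr^\Delta\!\left(sas^* b\, c^*\right)$; using cyclicity to move the smoothing factor $sas^*$ around and writing $s a s^* = (s a^* s^*)^*$, I would rearrange this into $\tr^\Delta\!\left(b\,(s a^* s^* c)^*\right) = \left(b, \Theta^{s,l}_{a^*} c\right)_\Delta$, which is the desired identity. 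The same template, with the smoothing operator placed on the right ($bsas^*$) or inside a commutator ($[sas^*,b]$), handles the $\Theta^{s,r}$ and $\Theta^{s,[]}$ cases; note that in the statement of the Lemma the second and third bullets appear to contain typos (``adjoint of $\Theta^{s,l}_a$'' should read $\Theta^{s,r}_a$ and $\Theta^{s,[]}_a$ respectively), and I would prove the manifestly intended statements.

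The key steps, in order: (1) record that for $s \in Cl^{-\infty}(S^1,V)$ and any $a \in Cl(S^1,V)$ the operator $sas^*$ is smoothing, so by Theorem~\ref{t1}(iii) we have $\tr^\Delta([sas^*, X]) = 0$ for every $X \in Cl(S^1,V)$, i.e. $\tr^\Delta$ behaves cyclically across a smoothing factor; (2) recall $\tr^\Delta(X^*)=\overline{\tr^\Delta(X^*)}$ — more precisely $\tr^\Delta X = \overline{\tr^\Delta X^*}$ using $\Delta^*=\Delta$ — and that $(sas^*)^* = s a^* s^*$; (3) for the left case, expand $\left(\Theta^{s,l}_ab,c\right)_\Delta = \tr^\Delta(sas^*bc^*)$, commute the smoothing block using step (1) to get $\tr^\Delta(b c^* s a s^*)$, then rewrite $c^* s a s^* = (s a^* s^* c)^*$ and recognize the result as $\tr^\Delta\!\left(b(\Theta^{s,l}_{a^*}c)^*\right) = \left(b, \Theta^{s,l}_{a^*}c\right)_\Delta$; (4) repeat for $\Theta^{s,r}$ by symmetry of the placement of $b$, and for $\Theta^{s,[]}$ by linearity, writing $[sas^*,b] = sas^* b - b sas^*$ and applying the computation to each term; (5) conclude in each case that $\left(\Theta^{s,\bullet}_a b, c\right)_\Delta = \left(b, \Theta^{s,\bullet}_{a^*} c\right)_\Delta$, which is exactly the adjointness assertion.

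The main obstacle I anticipate is purely bookkeeping rather than conceptual: one must be careful that the cyclicity of $\tr^\Delta$ is only available \emph{through the smoothing factor} (it genuinely fails in general, as emphasized repeatedly in the paper via equation~(\ref{crochet})), so every rearrangement has to be phrased as moving the block $sas^*$ (or $s(a-a^*)s^*$) past the other operators, never as a naive cyclic permutation of an arbitrary product. A secondary point to check is that all operators appearing inside $\tr^\Delta$ lie in a class for which $\tr^\Delta$ is defined and the commutator-vanishing statement of Theorem~\ref{t1}(iii) applies; since one factor is always smoothing this is automatic, but it should be stated. Once these two observations are in place the three verifications are each a two- or three-line manipulation identical in structure to the proof of the earlier Lemma for $\Theta^w$, so I would present them compactly, perhaps doing the $\Theta^{s,l}$ case in full and indicating that the other two follow mutatis mutandis.
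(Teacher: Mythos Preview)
Your proposal is correct and follows essentially the same route as the paper's own proof: for $\Theta^{s,l}$ the paper cycles the smoothing block $sas^*$ past $bc^*$ (justified exactly as you say by Theorem~\ref{t1}(iii)) and then rewrites $c^*sas^* = (sa^*s^*c)^*$, handles $\Theta^{s,r}$ by the analogous one-line manipulation, and deduces the $\Theta^{s,[]}$ case from the linear identity $\Theta^{s,[]}_a = \Theta^{s,l}_a - \Theta^{s,r}_a$. Your observation that the second and third bullets contain typos (they should read $\Theta^{s,r}_a$ and $\Theta^{s,[]}_a$) is also correct.
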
 
\begin{proof}
		Let $ (a,b,c)\in Cl(S^1,\C)^3.$
	\begin{eqnarray*}
		(\Theta_a^{s,l}b,c)_\Delta  & = & (sas^*b ,c)_\Delta \\
		& = & \tr^\Delta( sas^*b c^*)   \\
		& = & \tr^\Delta( b c^*sas^*)  \hbox{ because }sas^*\in Cl^{-\infty}(S^1,V) \\
		& = & \tr^\Delta( b (sa^*s^*c)^*) .
	\end{eqnarray*}
which proved the first point.
\begin{eqnarray*}
	(\Theta_a^{s,r}b,c)_\Delta  & = & (bsas^* ,c)_\Delta \\
	& = & \tr^\Delta( bsas^* c^*)   \\
	& = & \tr^\Delta( b (csa^*s^*)^*) .
\end{eqnarray*}
which proves the second point.
The third pont is proved straightway by the remark $\Theta_a^{s,[]}=\Theta_a^{s,l}-\Theta_a^{s,r}$
\end{proof}
 
\begin{Theorem}
	
	  Then  $\theta^{s,[]},$ $\theta^{s,l}$ and $\theta^{s,r},$ define three right-invariant pseudo-Hermitian connections on $FCl(S^1,V).$ 
\end{Theorem}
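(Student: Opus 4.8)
The plan is to verify the pseudo-Hermitian property for each of the three candidate connection $1$-forms separately, exactly mirroring the structure of the proof of the earlier theorem for $\theta^w$. Recall that a connection $1$-form $\theta$ is pseudo-Hermitian for $(.,.)_\Delta$ precisely when
\begin{equation}
\left(\theta_a b, c\right)_\Delta + \left(b, \theta_{a^*} c\right)_\Delta = 0
\end{equation}
for all $a,b,c \in Cl(S^1,\C)$. The work has essentially been done in Lemma \ref{Thetas}: for each decoration $\star \in \{l, r, []\}$ we know that $\Theta^{s,\star}_{a^*}$ is the $(.,.)_\Delta$-adjoint of $\Theta^{s,\star}_a$, i.e.\ $(\Theta^{s,\star}_a b, c)_\Delta = (b, \Theta^{s,\star}_{a^*} c)_\Delta$. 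The connection forms are the antisymmetrizations $\theta^{s,\star}_a = \Theta^{s,\star}_a - \Theta^{s,\star}_{a^*}$, so I would compute, for $(a,b,c) \in Cl(S^1,\C)^3$,
\begin{align*}
\left(\theta^{s,\star}_a b, c\right)_\Delta &= \left(\Theta^{s,\star}_a b, c\right)_\Delta - \left(\Theta^{s,\star}_{a^*} b, c\right)_\Delta \\
&= \left(b, \Theta^{s,\star}_{a^*} c\right)_\Delta - \left(b, \Theta^{s,\star}_a c\right)_\Delta \\
&= -\left(b, \theta^{s,\star}_{a^*} c\right)_\Delta,
\end{align*}
which is exactly the required identity. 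This handles $\theta^{s,l}$ and $\theta^{s,r}$ directly; for $\theta^{s,[]}$ one can either apply the same computation using the third bullet of Lemma \ref{Thetas}, or simply use $\theta^{s,[]}_a = \theta^{s,l}_a - \theta^{s,r}_a$ and add the two identities already obtained.

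Next I would address why these are genuine \emph{connection $1$-forms} and why they are \emph{right-invariant}: since $s \in Cl^{-\infty}(S^1,V)$ is a fixed smoothing operator, the maps $a \mapsto sas^*$ land in $Cl^{-\infty}(S^1,V)$, hence each $\Theta^{s,\star}_a$ (and thus each $\theta^{s,\star}_a$) is a well-defined smooth linear endomorphism of $Cl(S^1,V)$, taking values in the smoothing ideal; linearity in $a$ is immediate, and the assignment extends to the semidirect product $Cl(S^1,V) \rtimes Vect(S^1)$ via the sum map as in Section \ref{s:HS}. Right-invariance is then built in by declaring $\theta^{s,\star}$ on $T_A FCl^{*}(S^1,V) \cong R_A(Cl(S^1,V) \rtimes Vect(S^1))$ through $\theta^{s,\star}_{R_A(a)}(R_A(b)) = R_A(\theta^{s,\star}_a b)$, which is consistent because the defining formulas only involve operator composition and the fixed operator $s$. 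I would also note that the fact that each $\theta^{s,\star}$ is $Cl^{-\infty}(S^1,V)$-valued makes all the renormalized traces appearing in the verification honest traces of trace-class operators, so the cyclicity invoked in Lemma \ref{Thetas} (``because $sas^* \in Cl^{-\infty}(S^1,V)$'') is legitimate and no anomaly term intervenes.

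The main obstacle — and the only genuinely delicate point — is the very one flagged at the start of Section \ref{s:conn}: one must make sure the manipulation $\tr^\Delta(sas^*bc^*) = \tr^\Delta(bc^*sas^*)$ really is available, i.e.\ that the cyclic move is performed across a \emph{smoothing} factor and not against an unbounded one. Because $s$ and $s^*$ are smoothing, $sas^*$ is trace-class even when $a$ has positive order, and then $sas^* \cdot (bc^*)$ and $(bc^*) \cdot sas^*$ are both trace-class with equal (ordinary, hence renormalized) trace; so the step is safe, but it is worth spelling out since it is exactly the place where the naive ``$\tr^\Delta$ is not tracial'' warning would otherwise bite. Everything else is the formal bookkeeping displayed above, and no further input beyond Lemma \ref{Thetas} and the non-degeneracy/linearity already established for $(.,.)_\Delta$ is needed.
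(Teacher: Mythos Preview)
Your proposal is correct and follows exactly the paper's approach: the paper's proof is the single line ``It follows from Lemma \ref{Thetas}'', and you have simply unpacked that reference by writing out the antisymmetrization computation (already done verbatim for $\theta^w$ earlier) and adding the routine well-definedness and right-invariance remarks that the paper leaves implicit. No different idea is involved.
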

\begin{proof} It follows from Lemma \ref{Thetas}
	\end{proof}



\section{Last remarks}
\subsection{$Diff(S^1)$ versus $Diff_+(S^1).$}
The group of diffeomorphims $Diff(S^1)$ splits into two connected components 
$$Diff(S^1)=Diff_+(S^1)\coprod Diff_-(S^1)$$
where $ Diff_-(S^1)$ is the space of diffeomorphisms which reverse the orientation of $S^1.$ Among these diffeomorphisms, there is the conjugate map $$  z \in S^1 \mapsto \bar z $$
which induces an involution 
$$ Conj: (z \mapsto f(z)) \in L^2(S^1,V) \mapsto (z \mapsto f(\bar z)) \in L^2(S^1,V)$$
that decomposes blockwise in $L^2(S^1,V) = E_- \oplus E_0 \oplus E_+$ as
$$ Conj = \left(\begin{array}{ccc}
0 & 0 & * \\
0 & Id_{E_0} & 0 \\
* & 0 & 0
\end{array}\right)$$
and in $H_+ \oplus H_-,$ for any choice made for $H_+$ and $H_-$ (see section \ref{sect2}) as

$$ Conj = \left(\begin{array}{cc}
Conj_{++} & Conj_{+-}  \\
Conj_{-+} & Conj_{--}  \\
\end{array}\right)$$
where $Conj_{++}$ and $ Conj_{--}$ are finite rank, smoothing operators. We have that $$Diff_-(S^1) = Conj \circ Diff_+(S^1) = Diff_+(S^1) \circ Conj.$$ By the way, 
given $A \in FCl_{Diff(S^1)}(S^1,V),$ if the phase diffeomorphism $g$ of $A$ is orientation preserving, then, under the blockwise decomposition $H_+ \oplus H_-,$
$$ A = \left(\begin{array}{cc}
A_{++} & A_{+-}  \\
A_{-+} & A_{--}  \\
\end{array}\right)$$
$ A_{+-}$ and $A_{-+}$ are smoothing operators according to \cite{Ma2016}, and if $g \in Diff_-(S^1),$  $ A_{++}$ and $A_{--}$ are smoothing operators.
\subsection{The Schwinger cocycle and the connection $\Theta^{i\epsilon}.$}
Let us make the two following remarks
\begin{Proposition}
	Let $(a,b) \in Cl(S^1,V)\rtimes Vect(S^1).$ Then 
	$$c_s(a,b) = -i\tr^\Delta(\Theta^{i\epsilon}_ab) = \tr^\Delta(\Theta^{i\epsilon}_a\Theta^{i\epsilon}_b \epsilon(D)).$$ 
\end{Proposition}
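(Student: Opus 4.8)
The plan is to unwind the definition of $\Theta^{i\epsilon}$ and to recognize the resulting renormalized trace as the polarized expression $\tr\left([a,\epsilon(D)]b\right)$ appearing in Theorem \ref{th1}. First I would write $\Theta^{i\epsilon}_a b = b[a,i\epsilon(D)] = i\,b[a,\epsilon(D)]$, so that $-i\,\tr^\Delta(\Theta^{i\epsilon}_a b) = \tr^\Delta\bigl(b[a,\epsilon(D)]\bigr)$. The crucial point is that, by Theorem \ref{th1}, $[a,\epsilon(D)] \in Cl^{-\infty}(S^1,V)$ for every $a \in Cl(S^1,V)$, hence $b[a,\epsilon(D)]$ is smoothing, so $\tr^\Delta$ on it coincides with the ordinary trace $\tr$ and is cyclic on this product: $\tr^\Delta\bigl(b[a,\epsilon(D)]\bigr) = \tr\bigl(b[a,\epsilon(D)]\bigr) = \tr\bigl([a,\epsilon(D)]b\bigr)$. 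This is exactly $c_s^D(a,b)$ as written in Theorem \ref{th1}, giving the first equality.

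For the second equality I would start from $\Theta^{i\epsilon}_a \Theta^{i\epsilon}_b \epsilon(D)$ and compute inside out: $\Theta^{i\epsilon}_b \epsilon(D) = \epsilon(D)[b,i\epsilon(D)] = i\,\epsilon(D)[b,\epsilon(D)]$, and then applying $\Theta^{i\epsilon}_a$ gives $i\,\epsilon(D)[b,\epsilon(D)]\cdot[a,i\epsilon(D)] = i^2\,\epsilon(D)[b,\epsilon(D)][a,\epsilon(D)] = -\epsilon(D)[b,\epsilon(D)][a,\epsilon(D)]$. Taking $\tr^\Delta$ (again legitimate cyclicity, since both commutators are smoothing) we must check that $-\tr\bigl(\epsilon(D)[b,\epsilon(D)][a,\epsilon(D)]\bigr)$ equals $\tr\bigl([a,\epsilon(D)]b\bigr)$. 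This is the standard algebraic identity underlying the Schwinger cocycle: using $\epsilon(D)^2 = Id$ (up to the finite-rank ambiguity on $E_0$, which is itself smoothing and does not affect the trace after the relevant cancellations), one has $\epsilon[b,\epsilon] = \epsilon b\epsilon - b$ and $[a,\epsilon][b,\epsilon]\epsilon$-type manipulations reduce the triple product to $[a,\epsilon]b$ modulo terms whose trace vanishes by cyclicity; concretely $\tfrac12\tr\bigl(\epsilon[\epsilon,a][\epsilon,b]\bigr) = \tr\bigl([a,\epsilon]b\bigr)$ is precisely the content of Theorem \ref{th1}, and the sign bookkeeping with the two factors of $i$ matches.

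The main obstacle I expect is the careful handling of the $\epsilon(D)$ ambiguity on $E_0$ (the three possible conventions recalled before Lemma \ref{calcultr}) and making sure all trace manipulations are of trace-class operators: the safe route is to observe that any two choices of $\epsilon(D)$ differ by a finite-rank operator, that all commutators $[a,\epsilon(D)]$ land in $Cl^{-\infty}(S^1,V)$ regardless of the choice, and that $\epsilon(D)^2 - Id$ is finite rank, so every product appearing is trace class and cyclicity of $\tr$ applies without reservation. Once this is in place the identities are purely formal rearrangements of the type already used to establish Theorem \ref{th1}, so no genuinely new estimate is needed; the proof is essentially a dictionary translation between the connection notation $\Theta^{i\epsilon}$ and the Schwinger/index-cocycle formulas of Section \ref{sect2}.
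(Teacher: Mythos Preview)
Your proposal is correct and follows essentially the same route as the paper. For the first equality you unwind $\Theta^{i\epsilon}_ab=i\,b[a,\epsilon(D)]$ and use that $[a,\epsilon(D)]\in Cl^{-\infty}(S^1,V)$ to commute under $\tr^\Delta$, exactly as the paper does; for the second equality you compute $\Theta^{i\epsilon}_a\Theta^{i\epsilon}_b\,\epsilon(D)=-\epsilon(D)[b,\epsilon(D)][a,\epsilon(D)]$ and then appeal to the two equivalent expressions for $c_s$ in Theorem~\ref{th1}, whereas the paper recognises this directly as $-c_s(b,a)$ and invokes antisymmetry---the same identification, phrased slightly differently. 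Your additional remarks on the $E_0$ ambiguity and on trace-class justification are more careful than the paper's terse proof but do not change the argument.
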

\begin{proof}
	We compute independently
	$$\tr^\Delta(\Theta^{i\epsilon}_ab) = \tr^\Delta(b[a,i\epsilon(D)]) = i \tr^\Delta([a,\epsilon(D)]b)$$
	since $[a,\epsilon(D)] \in Cl^{-\infty}(S^1,V),$ and
	\begin{eqnarray*}
		\tr^\Delta(\Theta^{i\epsilon}_a\Theta^{i\epsilon}_b \epsilon(D)) & = & \tr^\Delta( \epsilon(D)[b,i\epsilon(D)][a,i\epsilon(D)]) \\
		& = & - c_s(b,a)\\
		& = & c_s(a,b).
		\end{eqnarray*}
	\end{proof}
\begin{rem}
	When defining a smoothing connection $\theta$ on $Cl(S^1,V)\rtimes Vect(S^1),$ we define a map with values on the first component of the product $Cl(S^1,V)\times Vect(S^1).$
\end{rem}  

*

:
:\begin{Theorem}
	The Schwinger cocycle $c_s$ has the same cohomology class as $$c_1^{i\epsilon} : (a,b) \in Cl(S^1,V)^2 \mapsto  \frac{1}{2}\tr^\Delta\left(\Omega^{i\epsilon}(a,b)\epsilon(D)\right)$$
	where $\Omega^{i\epsilon}$ is the curvature of $\Theta^{i\epsilon}.$
\end{Theorem}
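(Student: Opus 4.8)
The strategy is to expand $c_1^{i\epsilon}$ by means of the curvature formula for $\Theta^w$ established above and to recognize the outcome as $c_s$ plus the coboundary of an explicit $1$-cochain. First I would specialize the curvature identity
\[ \Omega_{\Theta^w}(a,b)c = c\bigl([b,w][a,w] - [a,w][b,w] - [[b,a],w]\bigr) \]
to $w = i\epsilon(D)$ and $c = \epsilon(D)$. Using $[x,i\epsilon(D)] = i[x,\epsilon(D)]$, this gives
\[ c_1^{i\epsilon}(a,b) = \tfrac12\,\tr^\Delta\Bigl(\epsilon(D)\bigl(-[b,\epsilon(D)][a,\epsilon(D)] + [a,\epsilon(D)][b,\epsilon(D)] - i[[b,a],\epsilon(D)]\bigr)\Bigr). \]

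Next I would simplify the first two terms. By Theorem~\ref{th1} the operators $[a,\epsilon(D)]$, $[b,\epsilon(D)]$ and $[[b,a],\epsilon(D)]$ are smoothing, hence trace-class, so on any product containing one of them the functional $\tr^\Delta$ agrees with the ordinary trace and is therefore cyclic there. Combining cyclicity with the standard anticommutation relation $\epsilon(D)\,[x,\epsilon(D)] = -[x,\epsilon(D)]\,\epsilon(D)$ (valid up to the rank-$k$ operator $p_{E_0}$) yields
\[ \tr^\Delta\bigl(\epsilon(D)[a,\epsilon(D)][b,\epsilon(D)]\bigr) = -\tr^\Delta\bigl(\epsilon(D)[b,\epsilon(D)][a,\epsilon(D)]\bigr), \]
so the first two contributions add up to $-\tr^\Delta\bigl(\epsilon(D)[b,\epsilon(D)][a,\epsilon(D)]\bigr)$. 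Since the preceding Proposition gives $c_s(a,b) = \tr^\Delta\bigl(\epsilon(D)[b,i\epsilon(D)][a,i\epsilon(D)]\bigr) = -\tr^\Delta\bigl(\epsilon(D)[b,\epsilon(D)][a,\epsilon(D)]\bigr)$, these two terms are exactly $c_s(a,b)$.

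It then remains to dispose of the third term. I would set $\mu(x) = \tfrac{i}{2}\,\tr^\Delta\bigl(\epsilon(D)[x,\epsilon(D)]\bigr)$ for $x \in Cl(S^1,V)$; by Theorem~\ref{th1} this is a well-defined continuous linear functional, and the computation above reads $c_1^{i\epsilon}(a,b) = c_s(a,b) + \mu([a,b])$. As $(a,b)\mapsto\mu([a,b])$ is, up to sign, the Lie-algebra coboundary $\delta\mu$, the $2$-cocycles $c_1^{i\epsilon}$ and $c_s$ differ by a coboundary and hence determine the same cohomology class. Depending on the convention chosen for $\epsilon(D)$ on $E_0$, one checks moreover that $\mu$ vanishes (blockwise $\epsilon(D)x\epsilon(D) - x$ is purely off-diagonal and hence traceless), in which case $c_1^{i\epsilon} = c_s$ on the nose; the cohomological statement is the one that is insensitive to that ambiguity.

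The main obstacle is the bookkeeping forced by the non-traciality of $\tr^\Delta$: every cyclic permutation used above must be justified by locating a smoothing (trace-class) factor in the product at hand, which is precisely the content of Theorem~\ref{th1}, and one must verify that the finite-rank ambiguity of $\epsilon(D)$ on $E_0$ produces only terms absorbed into $\mu([a,b])$, so that it does not affect the cohomology class. Apart from Theorem~\ref{th1} and the curvature and $c_s$ formulas quoted above, no further analytic input is required.
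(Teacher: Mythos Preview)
Your argument is correct and follows essentially the same route as the paper's own proof: expand the curvature, recognize each of the first two summands as $c_s(a,b)$ (you do this via the anticommutation $\epsilon[x,\epsilon]=-[x,\epsilon]\epsilon$ and cyclicity on trace-class factors, whereas the paper identifies each summand directly from the preceding Proposition and the antisymmetry $c_s(b,a)=-c_s(a,b)$; these are equivalent), and absorb the remaining commutator term into a coboundary $\delta\mu$. Your tracking of the stray factor $i$ in the third term and your supplementary remark that $\mu$ actually vanishes when $\epsilon(D)^2=Id$ are both correct refinements that the paper does not spell out.
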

\begin{proof}
	We have, $\forall (a,b) \in Cl(S^1,V)^2: $
	\begin{eqnarray*}
		\tr^\Delta\left(\Omega^{i\epsilon}(a,b)\epsilon(D)\right) & = & -\tr^\Delta\left(\epsilon(D)[b,\epsilon(D)][a,\epsilon(D)] - \epsilon(D)[a,\epsilon(D)][b,\epsilon(D)]\right. \\ && \left.- \epsilon(D)[[b,a],\epsilon(D)] \right) \\
		& = & -\tr^\Delta\left(\epsilon(D)[b,\epsilon(D)][a,\epsilon(D)] \right) + \tr^\Delta\left( \epsilon(D)[a,\epsilon(D)][b,\epsilon(D)] \right) \\ && - \tr^\Delta\left( \epsilon(D)[[b,a],\epsilon(D)] \right) \\
		& = & c_s(a,b) + c_s(a,b) + (\delta \gamma)(a,b)
		\end{eqnarray*}
	where $\delta$ is the coboundary operator and $$\gamma: a \in Cl(S^1,V) \mapsto \tr^\Delta\left( \epsilon(D)[a,\epsilon(D)] \right) = c_s(a,\epsilon(D)).$$
	\end{proof}
	
\subsection{On even-even $Diff(S^1)-$pseudo-differential operators}
Considering now $$FCl_{ee,Diff(S^1)}(S^1,V) = Cl^*_{ee}(S^1,V) \rtimes  Diff(S^1),$$
we remark that the renormalized trace $\tr^\Delta$ is tracial on its Lie algebra $Cl_{ee}(S^1,V) \rtimes  Vect(S^1),$ i.e. $$\forall (a,b)\in Cl_{ee}(S^1,V), \quad \tr^\Delta([a,b])=0$$
(representing $Cl_{ee}(S^1,V) \rtimes  Vect(S^1)$ in $Cl_{ee}(S^1,V)$ as in the rest of the text).This enables to state the following property:
\begin{Proposition}
	 $\forall a \in Cl_{ee}(S^1,V),$ the adjoint map 
	$$ ad_a : b \mapsto ad_a b = [b,a]$$
	has an adjoint map for $(.,.)_\Delta$ given by 
	$$ ad_a^* = ad_{a^*}.$$
\end{Proposition}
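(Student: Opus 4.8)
The plan is to verify directly from the definitions that $ad_{a^*}$ satisfies the defining adjointness relation for $ad_a$ with respect to the sesquilinear form $(.,.)_\Delta$, exploiting the crucial hypothesis that $\tr^\Delta$ is tracial on $Cl_{ee}(S^1,V)$. First I would fix $a \in Cl_{ee}(S^1,V)$ and arbitrary $b,c \in Cl_{ee}(S^1,V)$, and write out
$$\left(ad_a b, c\right)_\Delta = \tr^\Delta\left([b,a]c^*\right) = \tr^\Delta\left(bac^* - abc^*\right).$$
The goal is to massage this into $\left(b, ad_{a^*}c\right)_\Delta = \tr^\Delta\left(b\left([c,a^*]\right)^*\right) = \tr^\Delta\left(b(c a^* - a^* c)^*\right) = \tr^\Delta\left(b(a c^* - c^* a)\right) = \tr^\Delta\left(bac^* - bc^*a\right)$, using that $(a^*)^* = a$ and that adjunction reverses products.

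Comparing the two expressions, the identity to be established reduces to
$$\tr^\Delta\left(bac^* - abc^*\right) = \tr^\Delta\left(bac^* - bc^*a\right),$$
i.e. to $\tr^\Delta(abc^*) = \tr^\Delta(bc^*a)$. This is precisely an instance of the cyclicity $\tr^\Delta(XY) = \tr^\Delta(YX)$ with $X = a$ and $Y = bc^*$ — and this is exactly where the even-even hypothesis enters: $a$, $b$, and $c$ all lie in $Cl_{ee}(S^1,V)$, which is an algebra (so $bc^* \in Cl_{ee}(S^1,V)$, noting $c^* \in Cl_{ee}(S^1,V)$ since the even-even class is stable under taking adjoints), and $\tr^\Delta$ restricted to $Cl_{ee}(S^1,V)$ is tracial by the statement recalled just above the Proposition (which itself follows from Proposition \ref{p8} applied with the weight $\Delta = D^2$, or from equation (\ref{crochet}) together with the vanishing of the relevant Wodzicki residue). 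Hence $\tr^\Delta([a, bc^*]) = 0$, which gives the desired equality.

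The only mild subtlety worth spelling out — and the step I'd expect to require the most care — is the bookkeeping of adjoints: one must keep track of the fact that $(.,.)_\Delta$ is sesquilinear (conjugate-linear, say, in the second slot) and use the relation $\tr^\Delta A = \overline{\tr^\Delta A^*}$ from Proposition \ref{p6} correctly when transposing terms, as well as checking that $ad_{a^*}$ indeed maps $Cl_{ee}(S^1,V)$ to itself so that the statement is even well-posed (which again follows from $Cl_{ee}(S^1,V)$ being an algebra closed under $*$). Once those routine verifications are in place, the computation closes in a couple of lines and the adjointness relation $ad_a^* = ad_{a^*}$ follows. I would also remark, as the paper implicitly does, that this is the precise point at which the even-even restriction is essential: on the full algebra $Cl(S^1,V)$ the non-traciality of $\tr^\Delta$ obstructs the existence of such a clean adjoint, which is exactly the difficulty flagged at the start of Section \ref{s:conn}.
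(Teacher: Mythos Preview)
Your proposal is correct and follows essentially the same approach as the paper's own proof: both expand $\tr^\Delta([b,a]c^*)$, invoke the traciality of $\tr^\Delta$ on $Cl_{ee}(S^1,V)$ to replace $\tr^\Delta(abc^*)$ by $\tr^\Delta(bc^*a)$, and then repackage the result as $\tr^\Delta(b([c,a^*])^*)$. Your additional remarks on closure of $Cl_{ee}(S^1,V)$ under $*$ and on why the even-even hypothesis is essential are accurate and make explicit what the paper leaves implicit.
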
  
\begin{proof}
	Let $(a,b,c) \in Cl_{ee}(S^1,V)^3.$
	\begin{eqnarray*}
	\tr^\Delta\left((ad_a b)c^*\right) & = & \tr^\Delta\left([b,a]c^*\right) \\
	& = & \tr^\Delta\left(bac^*\right) - \tr^\Delta\left(abc^*\right) \\
	& = & \tr^\Delta\left(bac^*\right) - \tr^\Delta\left(bc^*a\right) \\
		& = & \tr^\Delta\left(b(ca^*)^*\right) - \tr^\Delta\left(b(a^*c)^*\right) \\
		& = & \tr^\Delta\left(b([c,a^*])^*\right)
	\end{eqnarray*}
	\end{proof}
As a consequence, applying the arguments of \cite{Freed1988} and especially those leading to \cite[Proposition 1.7]{Freed1988} to \textbf{right-}invariant vector fields on $FCl_{ee,Diff(S^1)}(S^1,V)$, we get:
\begin{Theorem}
	The pseudo-Riemannian metric $\mathfrak{Re}(.,.)_\Delta$ admits a unique pseudo-Riemannian, torsion-free (i.e. Levi-Civita) connection $\nabla^{\Delta}$ that reads as 
	$$\nabla^\Delta_ab = \frac{1}{2}\left( ad_a b  - ad_{a^*} b - ad_{b^*}a\right)$$
\end{Theorem}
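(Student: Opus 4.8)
The plan is to reduce the statement to the standard finite-dimensional formula for the Levi-Civita connection on a Lie group equipped with a right-invariant pseudo-metric, following the route of \cite{Freed1988}. Recall that for a right-invariant (weak) pseudo-Riemannian metric on a Lie group, with $\langle\cdot,\cdot\rangle$ the value of the metric at the identity on the Lie algebra $\mathfrak{g}$ (here $\mathfrak{g}=Cl_{ee}(S^1,V)\rtimes Vect(S^1)$, represented in $Cl_{ee}(S^1,V)$, with bilinear form $\mathfrak{Re}(\cdot,\cdot)_\Delta$ and right-invariant Lie bracket $ad_a b=[b,a]$), the Koszul formula gives
\begin{equation}\label{koszul}
2\,\mathfrak{Re}(\nabla^\Delta_a b,c)_\Delta = \mathfrak{Re}(ad_a b,c)_\Delta - \mathfrak{Re}(ad_b c,a)_\Delta + \mathfrak{Re}(ad_c a,b)_\Delta,
\end{equation}
the left-invariant terms involving derivatives of the (constant, right-invariant) metric coefficients dropping out exactly as in \cite[Prop.\ 1.7]{Freed1988}. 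So first I would record \eqref{koszul} and observe that, because the metric is non-degenerate (by the Theorem of Section \ref{s:HS}), a connection satisfying \eqref{koszul} is unique if it exists, is automatically metric and torsion-free, so it suffices to verify that the proposed $\nabla^\Delta_a b=\tfrac12(ad_a b - ad_{a^*}b - ad_{b^*}a)$ satisfies \eqref{koszul}.

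Next I would rewrite the right-hand side of \eqref{koszul} using the adjoint identity just proved, $ad_a^\ast=ad_{a^\ast}$ with respect to $(\cdot,\cdot)_\Delta$ (which passes to $\mathfrak{Re}(\cdot,\cdot)_\Delta$). Concretely, $\mathfrak{Re}(ad_b c,a)_\Delta=\mathfrak{Re}(c,ad_{b^\ast}a)_\Delta$ and $\mathfrak{Re}(ad_c a,b)_\Delta=\mathfrak{Re}(a,ad_{c^\ast}b)_\Delta=\mathfrak{Re}(ad_{c}a,b)_\Delta$; manipulating these together with the symmetry of $\mathfrak{Re}(\cdot,\cdot)_\Delta$ should collapse the three terms on the right of \eqref{koszul} into $\mathfrak{Re}\big((ad_a b - ad_{a^\ast}b - ad_{b^\ast}a),c\big)_\Delta$, which is exactly $2\,\mathfrak{Re}(\nabla^\Delta_a b,c)_\Delta$ for the claimed $\nabla^\Delta$. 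I would also separately check torsion-freeness directly from the formula: $\nabla^\Delta_a b-\nabla^\Delta_b a=\tfrac12(ad_a b - ad_b a)=ad_a b$ up to the sign/bracket convention, i.e.\ $[b,a]$, matching the Lie bracket, since $ad_{a^\ast}b+ad_{b^\ast}a$ is symmetric in $(a,b)$ and cancels; and metricity, $\mathfrak{Re}(\nabla^\Delta_a b,c)_\Delta+\mathfrak{Re}(b,\nabla^\Delta_a c)_\Delta=\mathfrak{Re}(ad_a b,c)_\Delta+\ldots$, follows from the same adjointness relation.

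The main obstacle is not the algebraic identity but the analytic/structural justification that \eqref{koszul} legitimately \emph{defines} a connection in this non-atlased, non-regular Fr\"olicher-group setting and on the semidirect product. Two points need care: first, the metric is only \textbf{weak} (and here even indefinite), so one cannot invoke the usual Hilbert-space argument that the functional $c\mapsto(\text{RHS of }\eqref{koszul})$ is represented by a vector $\nabla^\Delta_a b$; instead one must exhibit the candidate $\nabla^\Delta_a b$ by hand (which the statement does) and then use non-degeneracy only for \emph{uniqueness}. Second, one has to check that $\nabla^\Delta_a b$ as written actually lands in the Lie algebra $Cl_{ee}(S^1,V)\rtimes Vect(S^1)$ under the identification used — i.e.\ that brackets like $ad_{b^\ast}a=[a,b^\ast]$ of even-even operators (and their vector-field parts) stay even-even — which follows from the composition table for even-even/even-odd classes and the stability of $Cl_{ee}$ under the adjoint action of $Diff(S^1)$ recorded in Section \ref{s:res}. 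Once these are dispatched, I would invoke \cite{Freed1988} for the passage from the identity-level Koszul formula to the genuine right-invariant Levi-Civita connection, emphasizing that the traciality of $\tr^\Delta$ on $Cl_{ee}(S^1,V)\rtimes Vect(S^1)$ — the property that fails on the full $Cl(S^1,V)$ and is the reason the general case is only treated via smoothing connections in Section \ref{s:conn} — is precisely what makes the adjoint computation $ad_a^\ast=ad_{a^\ast}$ work and hence what makes the classical formula go through verbatim.
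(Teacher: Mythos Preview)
Your proposal is correct and takes essentially the same approach as the paper: the paper's own proof consists of a single sentence invoking \cite[Proposition 1.7]{Freed1988} for right-invariant vector fields, relying on the preceding adjoint identity $ad_a^\ast=ad_{a^\ast}$, which is exactly your strategy. You spell out the Koszul-formula computation and the analytic caveats (weak/indefinite metric, existence-by-exhibition versus uniqueness-by-non-degeneracy, stability under even-even class) that the paper leaves entirely implicit.
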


\subsection{On bounded even-even $Diff(S^1)-$pseudo-differential operators} 

Let us finish our remarks with the group of ($L^2-$)bounded even-even $Diff(S^1)-$pseudo-differential operators. Its Lie algebra 
$$ Cl^0_{ee}(S^1,V) \rtimes Vect(S^1)$$ also reads as
$$ Cl^{-1}_{ee}(S^1,V) \oplus DO^0(S^1,V) \oplus (Vect(S^1)\otimes Id_V).$$
and the pseudo-Riemannian product $\mathfrak{Re}(.,.)_\Delta$ decomposes blockwise as
$$\left(\begin{array}{ccc}
\mathfrak{Re}(.,.)_{HS} & * & * \\
* & 0 & 0 \\
* & 0 & 0 
\end{array}\right),$$ where $\mathfrak{Re}(.,.)_{HS} = \mathfrak{Re}\left((.,.)_{HS}\right)$ is the scalar product derived from the Hilbert-Schmidt Hermitian product $(.,.)_{HS}.$


\begin{thebibliography}{99}
 







\bibitem{ARS1} Adams, M.; Ratiu, T.; Schmidt, R.; A Lie group structure for pseudodifferential operators;
\textit{Math. Annalen} \textbf{273},   529-551 (1986).

\bibitem{ARS2} Adams, M.; Ratiu, T.; Schmidt, R.; A Lie group structure for Fourier integral operators;
	\textit{Math. Annalen} \textbf{276}, no.1 , 19--41 (1986).

\bibitem{Adl} Adler, M.; On a trace functionnal for formal pseudo-differential
 operators and the symplectic structure of Korteweg-de Vries type equations
\textit{Inventiones Math.} \textbf{50} 219-248 (1979)


\bibitem{BN2005} Batubenge, A.; Ntumba, P.; On the way to Fr\"olicher Lie groups
\textit{Quaestionnes mathematicae} (2005) \textbf{28} no1, 73--93
\bibitem{B} Berger, M.; \textit{A panoramic overview of Riemannian geometry} Springer (2003)
\bibitem{BK}  Bokobza-Haggiag, J.; { Op\'erateurs pseudo-diff\'erentiels sur une vari\'et\'e diff\'erentiable}; {\it Ann. Inst. Fourier, Grenoble} \textbf{19,1}  125-177 (1969)



\bibitem{CDMP} Cardona, A.; Ducourtioux, C.; Magnot, J-P.; Paycha, S.;
Weighted traces on pseudo-differential operators and geometry on loop groups;
\textit{Infin. Dimens.
Anal. Quantum Probab. Relat. Top.} \textbf{5} no4 503-541 (2002)

\bibitem{CDP} Cardona, A.; Ducourtioux, C.; Paycha, S.; From tracial anomalies to anomalies in quantum field theory \textit{Comm. Math. Phys.} \textbf{242} no 1-2 31-65 (2003)

\bibitem{cen} Cederwall, M.; Ferretti, G.; Nilsson, B; Westerberg, A.; Schwinger terms and 
cohomology of pseudo-differential operators \textit{Comm. Math. Phys.} \textbf{175}, 203-220 (1996)





\bibitem{Ee} Eells, J.;  A setting for global analysis
\textit{Bull. Amer. Math. Soc.} {\bf 72} 751-807 (1966)

\bibitem{Freed1988} Freed, D.; The Geometry of loop groups \textit{J. Diff. Geome.} \textbf{28} 223-276 (1988)

\bibitem{FK} Fr\"olicher, A; Kriegl, A; {\it Linear spaces and differentiation theory} (1988) Wiley series in Pure and Applied Mathematics, Wiley Interscience 

\bibitem{Gil} Gilkey, P;
{\it Invariance theory, the heat equation and the Atiyah-Singer index theorem}
Publish or Perish (1984)

\bibitem{Horm} H\"ormander,L.; Fourier integral operators. I; \textit{Acta Mathematica} \textbf{127} 79-189 (1971)

\bibitem{Hir} Hirsch, M.; \textit{Differential Topology} (1997),  Springer 

\bibitem{Igdiff} Iglesias-Zemmour, P.
\textit{Diffeology} 
Mathematical Surveys and Monographs \textbf{185} AMS  (2013).

\bibitem{Ka} Kassel, Ch.;
Le r\'esidu non commutatif (d'apr\`es M. Wodzicki)  S\'eminaire
Bourbaki, Vol. 1988/89. \textit{Ast\'erisque} {\bf 177-178},
Exp. No. 708, 199-229 (1989)

\bibitem{KW} Khesin, B.; Wendt, R.; \textit{The Geometry of Infinite-Dimensional Groups} Springer Verlag (2009)

\bibitem{KV1} Kontsevich, M.; Vishik, S.;
{ Determinants of elliptic pseudo-differential operators} Max
Plank Institut fur Mathematik, Bonn, Germany, preprint n. 94-30
(1994)

\bibitem{KV2}  Kontsevich, M.; Vishik, S.; Geometry of determinants of elliptic operators.
Functional analysis on the eve of the 21st century, Vol. 1 (New Brunswick, NJ, 1993), 
\textit{Progr. Math.} \textbf{131},173-197
(1995)

\bibitem{KK} Kravchenko, O.S.; Khesin, B.A.; A central extension of the algebra of pseudo-differential 
symbols \textit{Funct. Anal. Appl.} \textbf{25} 152-154 (1991) 

\bibitem{KM} Kriegl, A.; Michor,  P.W.; 
\textit{The convenient setting for global analysis} (1997); 
AMS Math. Surveys and Monographs \textbf{53}, AMS, Providence 

\bibitem{KMR} Kriegl, A.; Michor, P. W.; Rainer, A.; An exotic zoo of diffeomorphism groups on $\mathbb{R}^n$. \textit{Ann. Global Anal. Geom.} (2015) \textbf{47}  no. 2, 179–-222 . 

\bibitem{KMS} Kolar, I.; Michor, P.W.; Slovak, J.; \textit{Natural operations in differential geometry} (1993); Springer


\bibitem{Lau2011} Laubinger, M.; A Lie algebra for Fr\"olicher groups \textit{Indag. Math.} \textbf{21} no 3-4, 156--174 (2011) 

\bibitem{Le} Lesch, M.; On the non commutative residue for pseudo-differential operators 
with log-polyhomogeneous symbol \textit{Ann. Glob. Anal. Geom.} \textbf{17} 151-187 (1998)

	\bibitem{Les} Leslie, J.; On a Diffeological Group Realization of certain Generalized symmetrizable Kac-Moody 
Lie Algebras \textit{J. Lie Theory} \textbf{13} (2003),
427-442.

\bibitem{Ma2003}  Magnot, J-P.; The K\"ahler form on the loop group and the Radul cocycle on Pseudo-differential Operators; \textit{GROUP'24: Physical and Mathematical aspects of symmetries}, Proceedings of the 24th International Colloquium on Group Theorical Methods in Physics, Paris, France, 15-20 July 2002; Institut of Physic conferences Publishing \textbf{173}, 671-675, IOP Bristol and Philadelphia (2003) 

\bibitem{Ma2006} Magnot, J-P.; Chern forms on mapping spaces,
\textit{Acta Appl. Math.} \textbf{91}, no. 1, 67-95 (2006).

\bibitem{Ma2006-2}  Magnot, J-P.; Renormalized traces and cocycles on the algebra 
of $S^1$-pseudo-differential operators; \textit{Lett. Math. Phys.} \textbf{75} no2, 111-127 (2006)



\bibitem{Ma2006-3} Magnot, J-P.; Diff\'eologie du fibr\'e d'Holonomie en dimension infinie,
\textit{ C. R. Math. Soc. Roy. Can.} \textbf{28} no4 (2006)
121-127.
\bibitem{Ma2008} Magnot, J-P.; The Schwinger cocycle on algebras with unbounded operators. 
\bibitem{Ma2013} Magnot, J-P.; Ambrose-Singer theorem on diffeological bundles and complete integrability
of KP equations. {\em Int. J. Geom. Meth. Mod. Phys.} {\bf 10}, no 9 (2013) Article ID
1350043.

\bibitem{Ma2015} Magnot, J-P.; {\it q-deformed Lax equations and their differential geometric background}
(2015),
Lambert  Academic Publishing, Saarbrucken, Germany.


\bibitem{Ma2016} Magnot, J-P.; On $Diff(M)-$pseudodifferential operators and the geometry of non linear grassmannians. 
\textit{Mathematics} \textbf{4}, 1; doi:10.3390/math4010001 (2016)

\bibitem{Ma2018-2} Magnot, J-P.; The group of diffeomorphisms of a non-compact manifold is not regular
\textit{Demonstr. Math.} 51, No. 1, 8-16 (2018)

\bibitem{MR2016} Magnot, J-P.; Reyes, E. G.; Well-posedness of the Kadomtsev-Petviashvili hierarchy, 
Mulase factorization, and Fr\"olicher Lie groups  \texttt{arXiv:1608.03994}

\bibitem{MR2018} Magnot, J-P; Reyes E. G.; $Diff_+(S^1)-$pseudo-differential operators and the Kadomtsev-Petviashvili hierarchy \texttt{ArXiv:1808.03791}

\bibitem{MR2019}
Magnot, J-P. Reyes, E.G.; The Cauchy problem of the Kadomtsev-Petviashvili hierarchy and
infinite-dimensional groups.  \textit{in Nonlinear Systems and Their Remarkable Mathematical Structures, Volume 2; Norbert Euler and  Maria Clara Nucci Editors}, CRC press (2019) section B6

\bibitem{Mick} Mickelsson, J.; \textit{Current algebras and groups}. Plenum monographs in Nonlinear Physics, 	Springer (1989)

\bibitem{Mick} Mickelsson, J.; Wodzicki residue and anomalies on current algebras \textit{ Integrable models and strings} A. Alekseev and al. eds. \textit{Lecture notes in Physics} \textbf{436}, Springer (1994)

\bibitem{Neeb2007} Neeb, K-H.; Towards a Lie theory of locally convex groups \textit{Japanese J. Math.} (2006) \textbf{1}, 291-468

\bibitem{Om1973} Omori, H.;
Groups of diffeomorphisms and their subgroups.
\textit{Trans. Amer. Math. Soc.} (1973) \textbf{179} , 85–-122 . 

\bibitem{Om1981}Omori, H.; A remark on nonenlargeable Lie algebras. \textit{J. Math. Soc. Japan} (1981) \textit{33} no. 4, 707-–710 .

\bibitem{Om} Omori, H.; {\it Infinite Dimensional Lie Groups} (1997) AMS Translations of Mathematical Monographs no {\bf 158}  Amer. Math. Soc., Providence, R.I. 

\bibitem{Pay} Paycha, S.; Renormalized traces a looking glass into infinite dimensional geometry
\textit{Infin. Dimens.
Anal. Quantum Probab. Relat. Top.} \textbf{4} 221-226 (2001)

\bibitem{PayBook} Paycha, S; 
\textit{Regularised integrals, sums and traces. An analytic point of view.}
University Lecture Series \textbf{59}, AMS (2012).

\bibitem{Pay2008} Paycha, S.; Paths towards an extension of Chern-Weil
	calculus to a class of infinite dimensional vector
	bundles. in {\it Geometric and Topological Methods for Quantum Field Theory} Cambridge University Press, 81-139 (2013)
\bibitem{PS} Pressley, A.; Segal, G.; {\it Loop Groups} Oxford Univ. Press (1988)

\bibitem{Rad} O.A.Radul; {Lie albegras of differential operators, their central extensions, and W-algebras} \textit{Funct. Anal. Appl.} \textbf{25}, 25-39 (1991)

\bibitem{See} Seeley, R.T.; {Complex powers of an elliptic operator}
\textit{AMS Proc. Symp. Pure Math.} \textbf{10},  288-307 (1968)

\bibitem{Sch}  Schwinger, J.; Field theory of commutators; 
\textit{Phys. Rev. Lett.} \textbf{3}, 296-297 (1959)

	\bibitem{Scott} Scott, S.;
\textit{Traces and determinants of pseudodifferential operators}; 
OUP (2010)

\bibitem{Sou} Souriau, J-M.; un algorithme g\'en\'erateur de structures quantiques \textbf{Ast\'erisque} (hors s\'erie) 341-399 (1985)

\bibitem{Wa} Watts, J.; \textit{Diffeologies, differentiable spaces and symplectic geometry}. University of Toronto,
PhD thesis (2013). arXiv:1208.3634v1.

\bibitem{Wid} Widom, H.; { A complete symbolic calculus for pseudo-differential operators}; {\it Bull. Sc. Math. 2e serie} \textbf{104} 19-63  (1980)

\bibitem{W} Wodzicki, M.; {Local invariants in spectral asymmetry}
\textit{Inv. Math.} {\bf 75}, 143-178  (1984)
\end{thebibliography}
\end{document}